\newtheorem{thm}{Theorem}
\newtheorem{cor}[thm]{Corollary} 
\newtheorem{lem}[thm]{Lemma} 
\newtheorem{prop}[thm]{Proposition}
\newtheorem{defn}[thm]{Definition}
\newtheorem{remarks}[thm]{Remarks}
\def\a{\alpha}
\def\b{\beta} 
\def\g{\gamma} 
\def\d{\delta}
\def\e{\epsilon} 
\def\k{\kappa} 
\def\s{\sigma} 
\def\t{\tau} 
\def\o{\omega}
\def\D{\Delta}
\def\L{\Lambda} 
\def\O{\Omega}    
\def\R{\mathbb{R}}
\def\Z{\mathbb{Z}}
\def\p{\partial} 
\def\i{\infty}
\def\cal{\mathcal}
\def\B{\cal{B}}
\def\hB{\hat{\cal{B}}}
\def\A{\cal{A}}
\def\det{\it{det}}
\def\fr{\it{fr}}
\def\ch{\mathcal Ch}
\def\<{\langle}
\def\>{\rangle}
\def\rotateminus{\reflectbox{\rotatebox[origin=c]{155}{\hspace{.6pt}-}}}
\def\Xint#1{\mathchoice
{\XXint\displaystyle\textstyle{#1}}%
{\XXint\textstyle\scriptstyle{#1}}%
{\XXint\scriptstyle\scriptscriptstyle{#1}}%
{\XXint\scriptscriptstyle\scriptscriptstyle{#1}}%
\!\int}
\def\XXint#1#2#3{{\setbox0=\hbox{$#1{#2#3}{\int}$}
\vcenter{\hbox{$#2#3$}}\kern-.5\wd0}}
\def\cint{\Xint \rotateminus }
\numberwithin{thm}{subsection} 
\numberwithin{equation}{section} 
\begin{document} 
		\title{Soap film solutions to Plateau's problem}
	\author{J. Harrison\footnote{The author was partially supported by the Miller Institute for Basic Research in Science and the Foundational Questions in Physics Institute. An early version of this paper was first posted on the arxiv in February, 2010} \\Department of Mathematics \\University of California, Berkeley}

	\maketitle
	\begin{abstract}     
	  Plateau's problem is to show the existence of an area minimizing surface with a given boundary, a problem posed by Lagrange in 1760.  Experiments conducted by Plateau showed that an area minimizing surface can be obtained in the form of a film of oil stretched on a wire frame, and the problem came to be called Plateau's problem.  Special cases have been solved by Douglas, Rado, Besicovitch, Federer and Fleming, and others.   Federer and Fleming used the chain complex of integral currents with its continuous boundary operator to solve Plateau's problem for orientable, embedded surfaces.  But integral currents cannot represent surfaces such as the M\"obius strip or surfaces with triple junctions. In the class of varifolds, there are no existence theorems for a general Plateau problem because of a lack of a boundary operator.  We use the chain complex of differential chains with its continuous boundary operator to solve a general version of Plateau's problem.  We find the first solution which minimizes area taken from a collection of surfaces that includes all previous special cases, as well as all smoothly immersed surfaces of any genus type,  both orientable and nonorientable, and surfaces with multiple junctions.  
	Our result holds for all dimensions and codimensions in \( \R^n \).          
	\end{abstract}
 
 	\synctex=1
 		\pagenumbering{roman}
 			
 			\pagenumbering{arabic}
 		
 \section{Introduction} 
\label{sec:introduction}  
 
Plateau's problem asks whether there exists a  surface with minimal area  that spans a prescribed smoothly embedded closed curve \( \g \).  The solution depends on the definitions of  ``surface'', ``area'',    and ``span''.      Given a collection \( {\cal C} \) of surfaces, there is a natural sequence of questions:  
\begin{enumerate}
	\item Does there exist \( S \in {\cal C} \) that spans \( \g \)?
	\item Is the infimum \( m \) of areas of surfaces spanning \( \g \) nonzero?
	\item Does there exist a surface \( S_0 \) spanning \( \g \) with area \( m \)?
	\item What is the structure of \( S_0 \) away from \( \g \)?
	\item What is the structure of \( S_0 \) near \( \g \)?
\end{enumerate} 
                   
  Historically,  affirmative answers to  (1) and (3)   have been celebrated as a \emph{solution to Plateau's problem}, leaving  questions of regularity (4) and (5) for further work  over a period of time.    The general problem of solving (1)-(3) for a collection \( {\cal C} \) containing all known soap\footnote{Plateau's original experiments \cite{plateauoriginal} were with oil which does not form films across wire frames as easily as soap/glycerin solutions.} films arising in nature has been an open problem for 250 years.     Identifying a  new chain complex of topological vector spaces with a  rich algebra of bounded operators gave us a new approach to tackle the problem.  We present the first existence theorem for the general problem of Plateau in a setting where the boundary operator is continuous.

The first solutions to Plateau's problem by  Douglas \cite{douglas}, for which he won the first Field's medal, were found by defining surfaces as parametrized images of a disk,  and thus did not permit  nonorientable surfaces   or triple junctions.   Douglas used the integral of the Jacobian of the parametrizing map  to define area.   Figure \ref{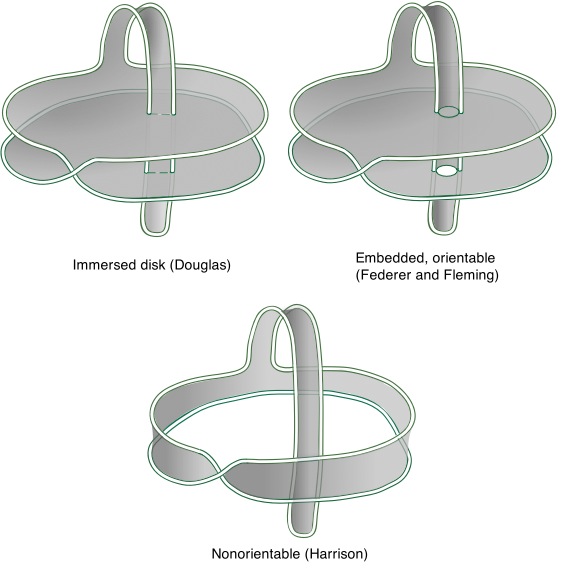} shows that the ``classical solutions'' of Douglas can have transverse self-intersections which are never seen in soap films.   Osserman  \cite{osserman}, Alt \cite{alt},  and  Gulliver \cite{gulliver} proved any classical solution of Douglas must be an immersion of a disk.   The solutions of Federer and Fleming \cite{federerfleming} using surfaces defined as integral currents are necessarily orientable.  They define area using the  mass norm of a current. They were awarded the Steele prize  ``for their pioneering work in Normal and Integral currents''  \cite{federerfleming}.    Fleming \cite{flemingregular} proved such solutions are smoothly embedded away from \( \g \), and regularity near a smooth boundary (5) was later established in \cite{hardtsimon}.    Plateau's problem remained an open problem because none of the  solutions given by Douglas or Federer and Fleming permits the M\"obius strip  as the solution \( (b) \) for the curve in Figure \ref{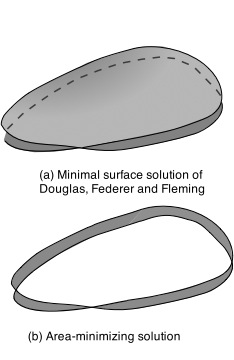}.  Instead, their methods produce solution \( (a) \), an oriented embedded disk.   
	
 \begin{figure}[htbp]
 	\centering
 		\includegraphics[height=3in]{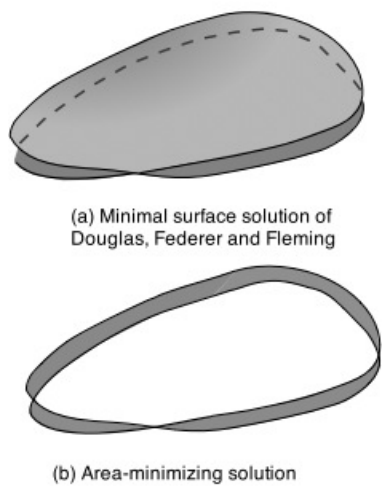}
 	  \caption{The M\"obius strip}
 	\label{MStrip.jpg}
 \end{figure}

   Reifenberg \cite{reifenberg} used point sets to define his surfaces and Hausdorff measure for area, but did not have a boundary operator.  His surfaces could model nonorientable examples, but not those with triple junctions.  Fleming and Ziemer's flat chains (mod 2) \cite{fleming, ziemer} contain M\"obius strips, but no surfaces with triple junctions.   Fleming's flat chains (mod 3) permit triple junctions but not M\"obius strips.
 
    Almgren's integral varifolds \cite{varifolds}   provide models for all soap films. A $2$-varifold is defined as a Radon measure on the product of \( \R^3 \) with the Grassmannian of $2$-planes through the origin of \( \R^3 \).   He proved  a   compactness theorem for integral varifolds with bounds on areas, first variations and supports.  For a time,  there was excited optimism about Almgren's methods.  Ziemer's Bulletin review \cite{ziemerplateau} referred to varifolds as  ``a new and promising approach to the old and formidable Plateau's problem.'' 
However, the lack of a boundary operator on varifolds  (see, for example, \cite{morgan}, \S 11.2), has made the proof of existence of an area minimizer given by a compactness theorem for varifolds  elusive.    
  The problem remained open with most mathematicians not realizing it. 
 
In this paper,   we    answer (1)-(3) in the affirmative. Our solutions are geometrically meaningful,  not just weak solutions, since all differential $k$-chains are approximated by ``Dirac $k$-chains'' which we define as formal sums \( \sum_{i=1}^{m} (p_i; \a_i) \) where \( p_i \in M \) and \( \a_i \in \L_k(T_{p_i}(M)) \).  Dirac chains have a natural and simple constructive geometric description (see \S \ref{sub:representations_of_open_sets}).    Our methods extend to a number of other variational problems and  to $k$-dimensional  cycles in Riemannian $n$-manifolds \( M \) for all \(0 \le k \le n-1 \).   

We roughly state our main theorem for smoothly embedded closed curves in \( \R^3 \) before defining all of the terms.   
\begin{thm}\label{thm:Plat}
Given a smoothly embedded closed curve \( \g:S^1 \to   \R^3 \), there exists a surface \( S_0 \) spanning \( \g = \g(S^1) \) with minimal area where \( S_0 \) is an element of a certain topological vector space   which  includes representatives of   all types of observed soap films as well as all smoothly immersed surfaces of all genus types, orientable or nonorientable, including those with possibly multiple junctions.
\end{thm}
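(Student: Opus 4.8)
The plan is to set up a chain complex that is rich enough to contain all the surface types mentioned—immersed surfaces of any genus, orientable or not, and surfaces with triple junctions—while retaining a continuous boundary operator, and then to run a direct-method argument in that space. Concretely, I would work in the space of differential chains, built as a completion of the space of Dirac $k$-chains $\sum_{i=1}^m (p_i; \a_i)$ described in the introduction. The first task is to fix a topology on this space in which (i) the boundary operator $\p$ is continuous, (ii) a suitable notion of area (mass) is defined and, crucially, lower semicontinuous, and (iii) bounded sets are precompact. These three properties are exactly the ingredients the direct method needs, and they are the payoff of choosing differential chains over varifolds, where (i) fails, or over integral currents, where the admissible class is too small to contain M\"obius strips or junctions.

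With that machinery in place, the argument would proceed as follows. First I would make precise what it means for an element $S$ of the chain space to \emph{span} the fixed curve $\g$; the natural condition is the boundary equation $\p S = \g$ (where $\g$ is regarded as a $1$-chain carrying its orientation), and here the continuity of $\p$ is what makes this constraint behave well under limits. Next I would verify that the spanning class $\{S : \p S = \g\}$ is nonempty and that the infimum $m$ of areas over this class is finite and strictly positive—this addresses questions (1) and (2) from the introduction and rests on a lower bound coming from the embeddedness of $\g$ (e.g.\ an isoperimetric-type estimate so that area cannot collapse to zero). Then I would extract a minimizing sequence $S_j$ with $\mathrm{area}(S_j) \to m$, apply the compactness theorem to pass to a subsequence converging to some $S_0$ in the chain topology, use continuity of $\p$ to conclude $\p S_0 = \lim \p S_j = \g$ so that $S_0$ spans $\g$, and use lower semicontinuity of area to conclude $\mathrm{area}(S_0) \le \liminf \mathrm{area}(S_j) = m$, forcing $\mathrm{area}(S_0) = m$. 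This is the classical shape of a direct-method proof; the novelty lies entirely in having a single space where all three hypotheses hold simultaneously.

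The main obstacle I expect is the compactness step, and more precisely the compatibility between compactness and lower semicontinuity of area. In the currents setting, compactness for integral currents is bought at the price of restricting to integer-multiplicity objects, which excludes the nonorientable and junction examples; in the varifold setting one has compactness but no continuous boundary, so the limit need not span $\g$. The delicate point here is therefore to show that the closure of the admissible surfaces under the chain topology—while large enough to contain all the desired examples—does not contain ``degenerate'' limits on which area is not controlled, and that the mass norm one minimizes is genuinely lower semicontinuous with respect to the (weaker) topology in which compactness holds. I would expect this to require a careful choice of the seminorms defining the differential-chain topology, together with a representation theorem identifying the minimizer $S_0$ as a geometrically meaningful surface rather than a merely weak limit; the introduction's emphasis that the solutions are ``not just weak solutions'' and that every chain is approximated by Dirac chains signals that such a representation is the crux of making the theorem substantive rather than formal.
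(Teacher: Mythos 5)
Your direct-method skeleton matches the paper's strategy in outline (nonempty admissible class with area bounded by a Federer--Fleming solution, strictly positive infimum, compactness, passage to a minimizing subsequence), but the proposal has a genuine gap at its core: the spanning condition \( \p S = \widetilde{\g} \) together with ``area = mass norm.'' In an oriented chain complex this is exactly the Federer--Fleming setup, and it inherits the very obstruction the theorem is meant to remove: there is no chain supported on a M\"obius film whose boundary is \( \widetilde{\g} \) and whose mass equals the film's \( {\cal H}^2 \)-measure, and a chain supported on three sheets meeting along a junction acquires spurious boundary along the junction (the Y-problem of Section 4). So your admissible class \( \{S : \p S = \widetilde{\g}\} \) excludes precisely the nonorientable and multiple-junction examples that the statement advertises. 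The paper's resolution is the dipole representation: candidate surfaces are integral dipole chains \( S = \sum_i P_{X_i}\widetilde{\t_i} \) (infinitesimal, positively oriented double layers, insensitive to orientability, with boundary-free interior junctions), and the boundary condition is \( \p S = P_Y\widetilde{\g} \), a dipole curve rather than \( \widetilde{\g} \) itself. Moreover, because closed dipole films are cycles, this boundary equation alone does not prevent holes, so the paper's Definition \ref{def:span} adds a second, topological clause --- every closed curve linking \( \g \) with linking number one must meet \( |S| \) --- and proving this condition is closed under limits (Proposition \ref{thm:spanwell}) is itself nontrivial, requiring the cube-restriction machinery of Theorem \ref{thm:JINU1} and the Hausdorff-measure identity of Proposition \ref{cor:hausdQ}.

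The second missing mechanism is the area functional. You propose minimizing the mass norm and lean on lower semicontinuity, flagging the tension with compactness as the delicate point. The paper sidesteps this tension entirely: the area of a dipole film is not its mass as a chain, but the volume of its ``infinitesimal fill,'' \( A(S) = \cint_{\overline{S}}\, dV \), where \( \overline{S} = \k(S) \) is produced by the cone operator (Proposition \ref{lem:filledmore}). Since \( \k \) is continuous (Theorem \ref{thm:H}) and \( \|J\|_{B^1} = \cint_J dV \) for positively oriented \( J \) (Lemma \ref{fdV}), the functional \( A \) is genuinely \emph{continuous} --- not merely lower semicontinuous --- on the completion \( \overline{{\cal I}_2^1(\O_R)} \). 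The restriction to the convex \emph{cone} of positively oriented integral dipole chains, rather than a linear subspace of all differential chains, is what rules out the degenerate limits you worry about: positivity forbids cancellation, so area cannot collapse along a convergent sequence. Compactness is then established not from a general ``bounded sets are precompact'' property of the chain topology (which does not hold in \( \hB_2^2(\O_R) \)) but by an explicit lattice discretization of Dirac dipole chains proving total boundedness of the constrained class (Proposition \ref{pro:tb}), combined with closedness of the spanning and area constraints (Theorem \ref{thm:complete}). Without the dipole representation, the modified boundary-plus-linking spanning condition, and the fill/cone-operator area functional, your outline cannot be executed in this space.
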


See Theorem \ref{pro:m} for a precise statement.  Our compactness Theorem \ref{pro:com} leading to Theorem \ref{thm:Plat}  is the first compactness theorem in an infinite dimensional space taking into account all  known soap films and  smoothly immersed surfaces, and  for which the boundary operator is well-defined, continuous, and maps the solution's ``representative''  to a representative \( \widetilde{\g} \) of the prescribed curve \( \g \).    Figure \ref{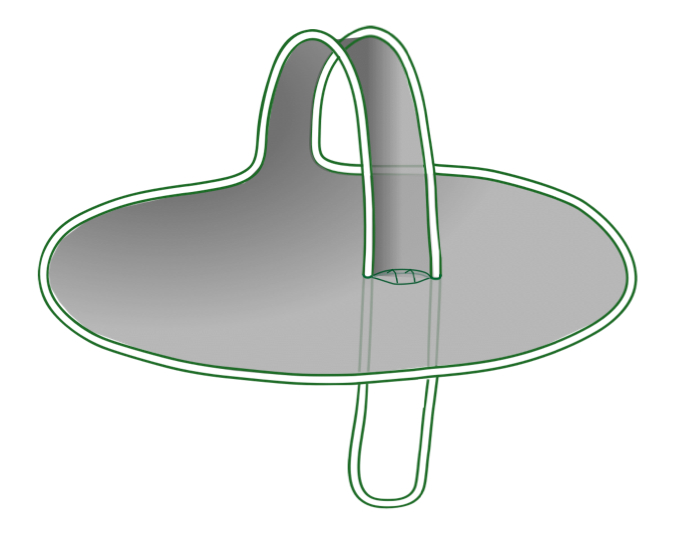} shows that \( \g \) does not have to be a closed curve\footnote{This example was used by Almgren to defend the lack of a boundary operator for varifolds as natural}.   However, \( \p \widetilde{\g} = 0 \), even for the example in Figure \ref{EmptyLoop.jpg}, without the need for ``hidden wires''.     

Figures \ref{MStrip.jpg} and \ref{AllThree.jpg} demonstrate the differences between solutions found by Douglas, Federer, Fleming, and the author for the  M\"obius strip and a simple modification of it.        
\begin{figure}[htbp]
	\centering
		\includegraphics[height=3in]{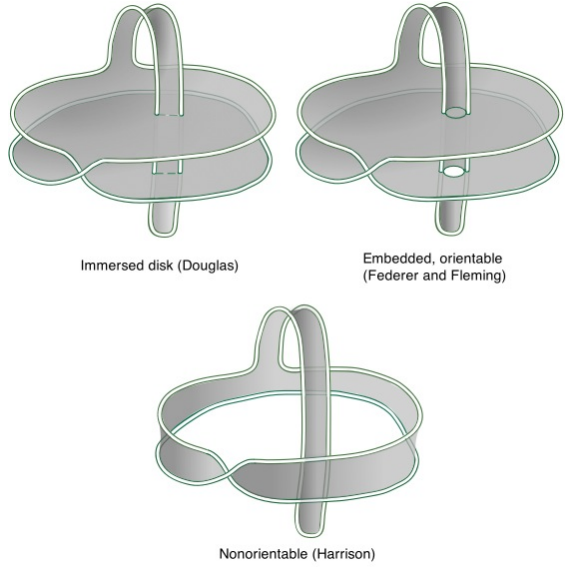}
  \caption{Three different solutions from three different viewpoints}
	\label{AllThree.jpg}
\end{figure}

In \cite{soap, plateau} the author found a solution to Plateau's soap film problem,   assuming a   bound on the total length of triple junctions,  a condition now discarded using the new methods of in \cite{OC}.    

The author thanks Morris Hirsch and Harrison Pugh for their helpful comments, questions, and insights.  Steven Krantz has supported research of numerous mathematicians working on aspects of Plateau's problem through his exceptional editorial work for the Journal of Geometric Analysis.  This paper would surely not exist had it not been for his early interest and encouragement.  She wishes to acknowledge Frank Morgan  \cite{morgan}  for his course on geometric measure at the Mathematical Sciences Research Institute in 2001 where he informed the audience that the general problem of Plateau was still open.  

 \section{Differential chains of type $ B $}          
\label{sub:preliminaries} 
   	  
   This work relies on methods of calculus presented in \cite{OC}.  In this preliminary section we recount the definition of the bigraded chain complex of topological vector spaces \( \hB_k^r(U) \) and those operators  from \cite{OC}  which we use in this paper.  
 \subsection{Dirac chains} 
 \label{Mackey}
 
For \( U \) open in \( \R^n \),  let \( {\cal A}_k =  {\cal A}_k(U) \) be the free vector space of \emph{Dirac $k$-chains in \( U \)}, i.e., finitely supported functions \( U \to \L_k(\R^n) \), expressed in the formal sum notation \( \sum (p_i; \a_i) \) where \( p_i \in U \) and \( \a_i \in \L_k(\R^n) \). (We use the standard convention of formal sums in which the only relations permitted are when the base points are the same.  For example, \(  (p;\a) + (p;\b) = (p;\a+\b) \) and \( 2 (p;\a) = (p;2\a) \).  ) We call \( (p;\a) \) a   $k$-\emph{element in \( U \)} if \( \a \in \L_k(\R^n) \) and \( p \in U \).  If \( \a \) is simple, then \( (p;\a) \) a \emph{simple} $k$-element in \( U \).

  \subsection{Mass norm} \label{ssub:differential_chains}
     An inner product \( \<\cdot,\cdot\> \) on \( \R^n \) determines the mass norm on \( \L_k(U) \) as
		follows: Let \( \<u_1 \wedge \cdots \wedge u_k, v_1 \wedge \cdots \wedge v_k \> = \det(\<u_i,v_j\>) \).  The	\emph{mass} of a simple $k$-vector \( \a = v_1 \wedge \cdots \wedge v_k \) is defined by \(
		\|\a\| := \sqrt{\<\a,\a\>} \). The mass of a $k$-vector \( \a \) is \( \|\a\| := \inf\left\{\sum_{j=1}^{N} \|(\a_i)\| : \a_i \mbox{ are simple, } \a =
		\sum_{j=1}^N \a_i \right\}. \) Define the \emph{mass} of a $k$-element \( (p;\a) \) by  \(\|(p;\a)\|_{B^0} := \|\a\|  \). Mass is a norm on the subspace of Dirac $k$-chains
		supported in \( p \), since that subspace is isomorphic to the exterior algebra \(
		\L(\R^n) = \oplus_{k=0}^n \L_k(\R^n) \) for which mass is a norm (see \cite{federer}, p 38-39). The \emph{mass} of a Dirac $k$-chain \( A =  \sum_{i=1}^{m} (p_i; \a_i) \in {\cal A}_k(U) \)   is given by  
	   \[  \|A\|_{B^0} := \sum_{i=1}^{m} \|(p_i; \a_i)\|_{B^0}. \] If a different inner product is chosen, the resulting masses of Dirac	chains are topologically equivalent.    It is straightforward to show that \( \|\cdot\|_{B^0} \) is a norm on \( {\cal A}_k(U) \).  
	
			\subsection{Difference chains and the  $ B^r $ norm} \label{sub:difference_chains}

		 Given a $k$-element \( (p;\a) \) with \( p \in U \) and \( u \in \R^n \), let \( T_u(p;\a) := (p+u;\a) \)
		be translation through \( u \), and \( \D_u(p;\a) := (T_u -I)(p; \a) \). Let \( S^j =
		S^j(\R^n) \) be the \( j \)-th \emph{symmetric power} of the symmetric algebra \( S(\R^n)
		\). Denote the symmetric product in the symmetric algebra \( S(\R^n) \) by \( \circ \). Let \( \s = \s^j = u_1 \circ \dots
		\circ u_j \in S^j \) with \( u_i \in \R^n, i = 1, \dots, j \). Recursively define $\D_{u
		\circ \s^j}(p; \a) := (T_u - I)(\D_{\s^j}(p; \a))$. Let \( \|\s\| := \|u_1\| \cdots
		\|u_j\| \) and \( |\D_{\s^j}(p; \a)|_{B^j} := \|\s\| \|\a\|. \)  Let \( \D_{\s^0} (p;\a) := (p;\a) \), to keep the notation consistent.   We say \( \D_{\s^j} (p;\a) \) is \emph{inside} \( U \) if the convex hull of \( | \D_{\s^j} (p;\a)| \) is a subset of \( U \).   

\begin{defn} \label{def:norms} For \( A \in {\cal A}_k(U) \) and \( r\ge 0 \), define the seminorm \[   \|A\|_{B^{r,U}} := \inf \left \{ \sum_{j=0}^r \sum_{i=1}^{m_j} \|\s_{j_i}\|\|\a_{j_i}\|: A = \sum_{j=0}^r \sum_{i=1}^{m_j} \D_{\s_{j_i}^j}(p_{j_i};\a_{j_i}) \mbox{ where } \D_{\s_{j_i}^j}(p_{j_i};\a_{j_i})  \mbox{ is inside } U \right\}. \] 
\end{defn}   For simplicity, we often write \( \|A\|_{B^r} =  \|A\|_{B^{r,U}} \) if \( U \) is  understood.   It is easy to see that the \( B^r \) norms on Dirac chains are decreasing as \( r   \) increases.

    It is shown in \cite{OC} (Theorems 2.6.1 and 6.1.2)  that \( \|\cdot\|_{B^r} \) is a norm on the free space of Dirac $k$-chains \( {\cal A}_k  \) called the \emph{\( B^r \) norm}.  
 		Let \( \hB_k^r = \hB_k^r(U) \) be the Banach space obtained upon completion of \( {\cal A}_k(U) \) with the \( B^r \) norm.  Elements of \( \hB_k^r(U), 0 \le r \le \i \),  are called \emph{differential   $k$-chains of class \( B^r \) in \( U \)}. The natural inclusions \(  \hB_k^r(U_1) \hookrightarrow  \hB_k^r(U_2) \) are continuous for all open \( U_1 \subset U_2 \subset \R^n \).  In \cite{OC} we also study the inductive limit  \( \hB_k =  \hB_k(U)    := \varinjlim \hB_k^r(U) \)   as \( r \to \i \), endowed with the inductive limit topology, obtaining a \( DF \)-space.  However, for this paper it suffices to work within a subcomplex of the simpler bigraded chain complex of Banach spaces \( \hB_k^r \).

Let \( {\cal B}_k^0(U) \) be the Banach space of bounded and measurable $k$-forms, \( {\cal B}_k^1(U) \) the Banach space of bounded Lipschitz $k$-forms, and for  each  \( r > 1 \),   let \( {\cal B}_k^r(U) \) be the Banach space of differential $k$-forms, each with uniform bounds on each of the \( s \)-th order directional derivatives for \( 0 \le s \le r-1 \) and the \( (r-1) \)-st derivatives satisfy a bounded Lipschitz condition.  Denote the resulting norm by \( \|\o\|_{B^r} = \sup_{|\ell| \le r-1}\{\| \o\|_{\sup}, |D^{\ell} \o|_{Lip}  \}  \).  We always denote differential forms by lower case Greek letters such as \( \o, \eta \) and differential chains by upper case Roman letters such as \( J, K, A \), so there is no confusion when we write \( \|\o\|_{B^r} \) or \( \|J\|_{B^r} \).  Elements of \( {\cal B}_k^r = {\cal B}_k^r(U) \) are called \emph{differential   $k$-forms of class \( B^r \) in \( U \)}.  

\begin{thm}[Isomorphism Theorem]\label{thm:isomorphism}
  \(  \hB_k^r(U)' \cong {\cal B}_k^r(U)  \)  and the integral pairing \( \cint: \hB_k^r(U) \times {\cal B}_k^r(U) \to \R \) where \( (J,\o) \mapsto \o(J) \) is bilinear, nondegenerate, and separately continuous.  
\end{thm}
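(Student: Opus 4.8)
The plan is to build the isomorphism from the evident pairing on Dirac chains, to verify that this pairing has operator norm equal to the $B^r$ norm of the form, and to show that every continuous functional arises this way. First I would define, for $\o \in \B_k^r(U)$ and a Dirac $k$-chain $A = \sum_i (p_i;\a_i)$, the pairing $\o(A) := \sum_i \o(p_i)(\a_i)$, where $\o(p_i)(\a_i)$ denotes evaluation of the covector $\o(p_i)$ on the multivector $\a_i$. The key computation is that pairing $\o$ with a difference chain produces an iterated finite difference of $\o$: explicitly, $\o(\D_{\s^j}(p;\a))$ is the $j$-th order finite difference of the scalar function $q \mapsto \o(q)(\a)$ along the directions $u_1,\dots,u_j$ making up $\s = u_1 \circ \cdots \circ u_j$. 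For $0 \le j \le r-1$ this is controlled by the supremum of the $j$-th derivative, and for $j = r$ by the Lipschitz bound on the $(r-1)$-st derivative through the mean-value representation of finite differences; in either case one obtains the estimate $|\o(\D_{\s^j}(p;\a))| \le \|\o\|_{B^r}\|\s\|\|\a\|$ whenever $\D_{\s^j}(p;\a)$ is inside $U$.

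Summing this estimate over any admissible decomposition $A = \sum_{j,i}\D_{\s^j_{ji}}(p_{ji};\a_{ji})$ and taking the infimum over all such decompositions yields $|\o(A)| \le \|\o\|_{B^r}\|A\|_{B^{r,U}}$. Hence the pairing extends by continuity from the dense subspace $\A_k(U)$ to all of $\hB_k^r(U)$ and defines a bounded linear map $\Phi: \B_k^r(U) \to \hB_k^r(U)'$ whose operator norm satisfies $\|\Phi(\o)\| \le \|\o\|_{B^r}$. Injectivity of $\Phi$ is immediate: testing $\Phi(\o)$ on the elements $(p;\a)$ recovers the value $\o(p)(\a)$, so $\Phi(\o)=0$ forces $\o \equiv 0$; this simultaneously gives nondegeneracy of the pairing in the form variable, and separate continuity in each slot follows from the same bound $|\o(A)| \le \|\o\|_{B^r}\|A\|_{B^r}$.

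For surjectivity and the reverse norm inequality, I would start from a functional $F \in \hB_k^r(U)'$ and define a $k$-covector field by $\o(p)(\a) := F((p;\a))$, which is linear and alternating in $\a$ since $(p;\cdot)$ is. Testing $F$ on single elements gives $\|\o\|_{\sup} \le \|F\|$, and testing $F$ on the difference chains $\D_{\s^j}(p;\a)$ gives uniform control of every finite difference of $\o$ up to order $r$, namely $|F(\D_{\s^j}(p;\a))| \le \|F\| \|\s\|\|\a\|$. The substantial step is to upgrade these finite-difference bounds into genuine regularity: the order-$j$ bounds for $j \le r-1$ must be shown to force $\o$ to be $C^{r-1}$ with uniformly bounded derivatives, and the order-$r$ bound must be shown to force $D^{r-1}\o$ to be Lipschitz, so that $\o \in \B_k^r(U)$ with $\|\o\|_{B^r} \le \|F\|$. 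This is a converse-Taylor argument characterizing the smoothness class $\B_k^r$ through the scaling behavior of iterated finite differences. Once $\o$ is known to lie in $\B_k^r(U)$, the identity $\Phi(\o) = F$ holds on Dirac chains by construction and therefore on all of $\hB_k^r(U)$ by density, so $\Phi$ is an isometric isomorphism.

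The main obstacle I anticipate is exactly this regularity-recovery step in the surjectivity argument: extracting classical derivatives of $\o$ from uniform control of its finite differences, and doing so uniformly up to the boundary of $U$. The ``inside $U$'' restriction in the norm means that near $\p U$ only difference chains whose convex hull lies in $U$ are admissible, so fewer finite differences are directly controlled there; I would handle this by exploiting the local nature of differentiability together with the fact that every interior point has a neighborhood in which all the needed difference chains are admissible, and then argue that the resulting derivatives satisfy the global bounds. Finally, nondegeneracy of the pairing in the chain variable follows from the established duality: since $\B_k^r(U) \cong \hB_k^r(U)'$ separates the points of the Banach space $\hB_k^r(U)$ by the Hahn--Banach theorem, $\o(J)=0$ for every $\o$ forces $J=0$.
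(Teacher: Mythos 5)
The paper does not prove this theorem itself but defers to \cite{OC} (Theorem 6.2.3), and your argument is essentially the one carried out there: the mean-value estimate \( |\o(\D_{\s^j}(p;\a))| \le \|\o\|_{B^r}\|\s\|\|\a\| \) giving boundedness of the pairing on Dirac chains, and the converse-Taylor recovery of a \( B^r \) form from the finite-difference bounds \( |F(\D_{\s^j}(p;\a))| \le \|F\|\,\|\s\|\,\|\a\| \) for surjectivity (your insistence on mixed differences in independent directions \( u_1,\dots,u_j \) is exactly what makes this step work, since equal-increment bounds alone would only yield Zygmund-type regularity). The one caveat is that with the paper's concrete norm \( \|\o\|_{B^r} = \sup_{|\ell| \le r-1}\{\|\o\|_{\sup}, |D^{\ell}\o|_{Lip}\} \) the identification is a norm equivalence (dimension-dependent constants enter when passing between coordinate-direction derivative bounds and arbitrary directional differences) rather than necessarily an isometry as you claim, but topological isomorphism is all the statement requires.
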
  (See \cite{OC} Theorem 6.2.3)

Denote  \[ \cint_J \o := \o(J) \] for all \( J \in \hB_k^r(U) \) and \( \o \in  {\cal B}_k^r(U) \).                                                               	 
      
\begin{defn}  \label{def:support}
If \( J \in \hB_k^r(U) \) is nonzero then its \emph{\textbf{support}} \( |J| \)  is the smallest closed subset \( E \subset U \)   such that  \( \cint_J \o = 0 \) for all smooth \( \o \) with compact support disjoint from \( E \).  Support of a nonzero differential chain  is a uniquely determined nonempty set (see \cite{OC}  Theorems 5.0.6 and 6.4.5).                           
	
\end{defn}

\subsection{Pushforward} 
\label{sub:operators1}  
 Suppose \( U_1 \subset \R^n\) and \( U_2 \subset \R^m \) are open  and \( F:U_1 \to
U_2 \) is a differentiable map. For \( p \in U_1 \) and \( v_1 \wedge \cdots \wedge v_k \in \L_k(\R^n) \), define \emph{linear pushforward} \( F_{p*}(v_1 \wedge \cdots \wedge
v_k) := DF_p(v_1) \wedge \cdots \wedge DF_p (v_k) \) where \( DF_p \) is the total
derivative of \( F \) at \( p \).     Define  \( F_*(p; \a) := (F(p), F_{p*}\a) \) for all simple $k$-elements \( (p;\a) \) and extend  to a linear map \( F_*:{\cal A}_k(U_1) \to {\cal A}_k(U_2) \) called \emph{pushforward}.  Define \( F^* \o := \o F_* \) for exterior $k$-forms \( \o \in {\cal A}_k(U)^* \).  Then \( F^* \) is the classical  pullback  \( F^*:{\cal A}_k(U_2)^* \to {\cal A}_k(U_1)^* \).    

\begin{defn}\label{def:Mr} \mbox{} \\
  Let \( {\cal M}^r(U, \R^m) \) be the vector space of Lipschitz maps \( F:U \to \R^m \) each of whose  directional derivatives \( L_{e_j} F_i \) of its coordinate functions \( F_i \)  are of class \( B^{r-1} \), for \( r \ge 1 \). Define the seminorm  \(  \rho_r(F) := \max_{i,j} \{\|L_{e_j} F_i\|_{B^{r-1, U}}\} \).    Let \( {\cal M}^r(U_1, U_2)  :=  \{F \in {\cal M}^r(U_1, \R^m) : F(U_1) \subset U_2 \subset \R^m\} \).   
	 \end{defn}      
A map \( F \in {\cal M}^1(U, \R^m) \) may not be bounded, but its directional derivatives must be.  An important example is the identity map \( x \mapsto x \)  which is an element of \(  {\cal M}^1(\R^n,\R^n) \).     
	     
\begin{thm}\label{thm:A}
 If \( F \in {\cal M}^r(U_1, U_2) \), then  \( F_* \) satisfies 
 \[
	\|F_*(A)\|_{B^{r,U_2}} \le  n2^r \max\{1, \rho_r(F)\} \|A\|_{B^{r,U_1}} \] for all
	\( A \in {\cal A}_0(U_1) \) and  \( r \ge 1 \).  It follows that  
 \( F_*: \hB_k^r(U_1) \to \hB_k^r(U_2) \) and  \( F^*:{\cal B}_k^r(U_2) \to {\cal B}_k^r(U_1) \)  are continuous bigraded operators with \( \cint_{F_* J} \o = \cint_J F^* \o \)  for all \( J \in \hB_k^r(U_1) \) and \( \o \in {\cal B}_k^r(U_2) \). Furthermore, 	 \( \p \circ F_* = F_* \circ \p \).
\end{thm}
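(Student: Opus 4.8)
The plan is to prove the displayed inequality directly on Dirac chains — this is the analytic core — and then obtain continuity of \( F_* \) and \( F^* \), the pairing identity, and the commutation with \( \p \) as formal consequences via the Isomorphism Theorem. Because \( \|\cdot\|_{B^{r,U_2}} \) is subadditive and \( \|A\|_{B^{r,U_1}} \) is an infimum over difference-chain decompositions, it suffices to bound \( F_* \) on a single difference chain: I would show that for \( 0\le j\le r \) and every \( \D_{\s^j}(p;\a) \) inside \( U_1 \),
\[ \|F_*\D_{\s^j}(p;\a)\|_{B^{r,U_2}} \le n2^r\max\{1,\rho_r(F)\}\,\|\s\|\|\a\|. \]
Applying \( F_* \) termwise to a near-optimal decomposition of \( A \) and taking the infimum then yields the theorem's estimate. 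The base case \( j=0 \) is immediate, since \( F_*(p;\a)=(F(p);\a) \) and the \( B^r \) norms decrease in \( r \), so \( \|F_*(p;\a)\|_{B^r}\le\|(F(p);\a)\|_{B^0}=\|\a\| \). (For \( A\in{\cal A}_0 \) one has \( \a\in\L_0=\R \); the identical argument, with \( F_{p*} \) now acting on \( \L_k \) through \( \wedge^k DF_p \), covers \( {\cal A}_k \) for \( k\ge 1 \).)

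The key estimate is the main obstacle, and I would prove it by induction on \( j \). Writing \( \D_{u\circ\s^{j-1}}(p;\a)=\D_{\s^{j-1}}(p+u;\a)-\D_{\s^{j-1}}(p;\a) \) and pushing forward, the difference \( F_*\D_{\s^{j-1}}(p+u;\a)-F_*\D_{\s^{j-1}}(p;\a) \) is \emph{not} a single translate, because \( F \) is nonlinear. A discrete Taylor/Leibniz expansion splits it into a leading term, in which the whole lower-order difference is translated in the target by a vector of the form \( F(\,\cdot+u)-F(\,\cdot) \), plus a remainder measuring the variation of \( DF \) over the translation. The leading term is a difference chain of order \( j\le r \) whose displacement is bounded by \( \rho_r(F)\|u\| \), while the remainder is controlled by the \( B^{r-1} \) norms of the \( L_{e_\ell}F_i \), hence again by \( \rho_r(F) \); each inductive step thus costs a factor \( 2 \) (the two pieces of the split) and summation over the \( n \) coordinate directions produces the factor \( n \), giving the constant \( n2^r\max\{1,\rho_r(F)\} \). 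One subtlety is that every difference chain produced must be \emph{inside} \( U_2 \): since the convex hull \( K \) of \( |\D_{\s^j}(p;\a)| \) is compact in \( U_1 \), its image \( F(K) \) is compact in \( U_2 \) and lies at positive distance from \( \R^m\setminus U_2 \); telescoping each difference into sufficiently many short steps — which leaves the norm sum unchanged — keeps all displacements small and all resulting chains inside \( U_2 \). Tracking this bookkeeping while maintaining the clean constant is the part I expect to require the most care.

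With the Dirac-chain estimate in hand, \( F_* \) extends by density to a bounded operator \( \hB_k^r(U_1)\to\hB_k^r(U_2) \). Its Banach-space transpose is bounded \( \hB_k^r(U_2)'\to\hB_k^r(U_1)' \), and under the Isomorphism Theorem \( (\hB_k^r)'\cong\B_k^r \) this \emph{is} a bounded operator \( \B_k^r(U_2)\to\B_k^r(U_1) \). The Dirac-chain computation
\[ \cint_{F_*(p;\a)}\o=\o(F(p))(F_{p*}\a)=(F^*\o)(p)(\a)=\cint_{(p;\a)}F^*\o \]
identifies this transpose with the classical pullback \( F^* \), so \( F^* \) is continuous and the pairing identity \( \cint_{F_*J}\o=\cint_J F^*\o \) holds on Dirac chains; by separate continuity of the pairing and density it holds for all \( J\in\hB_k^r(U_1) \).

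Finally, \( \p\circ F_*=F_*\circ\p \) follows by duality. Recalling that \( \p \) is characterized by \( \cint_{\p J}\o=\cint_J d\o \) and using the classical commutation \( F^*d=dF^* \), for every \( \o \) we get
\[ \cint_{\p F_*J}\o=\cint_{F_*J}d\o=\cint_J F^*d\o=\cint_J dF^*\o=\cint_{\p J}F^*\o=\cint_{F_*\p J}\o. \]
Since this holds for all test forms \( \o \), nondegeneracy of the integral pairing in Theorem \ref{thm:isomorphism} forces \( \p F_*J=F_*\p J \).
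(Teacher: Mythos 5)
The paper itself gives no in-text proof of Theorem \ref{thm:A}; it cites Theorem 6.5.6 of \cite{OC}, whose argument has the same overall architecture as yours: prove the estimate on difference chains of each order \( j\le r \) by induction (translate-plus-remainder, with the remainder controlled by the \( B^{r-1} \) norms of the \( L_{e_j}F_i \)), extend by density, identify the Banach transpose with pullback via Theorem \ref{thm:isomorphism}, and get the pairing identity. Your \( k=0 \) induction, the telescoping trick to keep the produced difference chains inside \( U_2 \) without changing the norm sum, and the transpose identification are all sound. The genuine gap is the parenthetical claim that ``the identical argument \ldots covers \( {\cal A}_k \) for \( k\ge 1 \)'' with the same constant. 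For \( k\ge 1 \), \( F_*(p;\a)=(F(p);F_{p*}\a) \) depends on \( p \) through \( DF_p \), and already your base case fails: \( \|F_{p*}\a\| \) is not bounded by \( \|\a\| \) but only by a constant of order \( \binom{n}{k}\rho_1(F)^k\|\a\| \). Worse, pushing forward \( \D_u(p;\a) \) no longer yields a single translated difference plus a displacement-variation remainder; since \( DF \) sits in each of the \( k \) wedge slots, a discrete Leibniz expansion across those slots is needed, producing \( k \)-dependent combinatorial factors and powers \( \rho_r(F)^k \). This is exactly why the theorem's displayed inequality is stated only for \( A\in{\cal A}_0(U_1) \), with the continuity of \( F_* \) on \( \hB_k^r \) for \( k\ge 1 \) asserted as a separate consequence whose (different) constants live in \cite{OC}. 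To repair your sketch you must either rerun the induction for general \( k \) tracking these factors, or reduce \( k \)-elements to \( 0 \)-elements through the extrusion calculus; either way the clean constant \( n2^r\max\{1,\rho_r(F)\} \) does not survive for \( k\ge 1 \).

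A second, smaller issue is the duality proof of \( \p\circ F_*=F_*\circ\p \). It relies on \( F^*d\o=dF^*\o \) for forms that are merely of class \( B^r \) and maps that are merely in \( {\cal M}^r \); at this regularity that identity is not free, and it is often itself derived from the chain-level commutation, so you risk circularity. A cleaner route within the paper's toolkit: verify \( \p F_*=F_*\p \) on polyhedral chains, where it is the classical statement, then invoke density of polyhedral chains in \( \hB_k^r(U) \) (\cite{OC}, Theorem 2.10.5) together with continuity of \( \p \) (Theorem \ref{thm:D}) and of \( F_* \). Also mind the bigrading: since \( \p:\hB_k^r\to\hB_{k-1}^{r+1} \), your nondegeneracy argument must be applied to the pairing at level \( r+1 \), testing against \( \o\in{\cal B}_{k-1}^{r+1}(U_2) \).
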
  

(See Theorem 6.5.6 in \cite{OC}.)  
 
	\subsubsection{Algebraic chains} 
	\label{sub:representations_of_open_sets}\mbox{}\\ 
\begin{thm}\label{thm:B}[Representatives of $k$-cells]
	Each oriented affine $k$-cell  \( \s \) in  \( U \) naturally corresponds to a unique  differential	$k$-chain  \( \widetilde{\s} \in \hB_k^1(U) \) 
	in the sense of integration of differential forms. That is, the  Riemann integral and the 
	differential chain integral coincide for all Lipschitz k-forms  \( \o \): \[  \int_\s \o = \cint_{\widetilde{\s}} \o. \]  
\end{thm}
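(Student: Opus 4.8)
The plan is to realize $\widetilde{\s}$ as a limit, in the $B^1$ norm, of Dirac $k$-chains built from Riemann sums, and then to read off both the integration identity and uniqueness from the Isomorphism Theorem \ref{thm:isomorphism}. Since $\s$ is affine, its tangent $k$-plane is constant, so fix the simple unit $k$-vector $\a_0 \in \L_k(\R^n)$ giving the orientation of $\s$, and write $V$ for its $k$-dimensional volume. Given a partition $P$ of $\s$ into small convex $k$-cells $Q_i$ with volumes $v_i = \mathrm{vol}_k(Q_i)$ and sample points $p_i \in Q_i$, form the Dirac $k$-chain $A_P := \sum_i (p_i; v_i \a_0) \in \A_k(U)$. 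For any Lipschitz $k$-form $\o$ one has $\cint_{A_P}\o = \sum_i \o_{p_i}(v_i\a_0) = \sum_i \o_{p_i}(\a_0)\,v_i$, which is exactly a Riemann sum for $\int_\s \o = \int_\s \o_p(\a_0)\,dV$.

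First I would show that the chains $\{A_P\}$ form a Cauchy net in the $B^1$ norm as $\mathrm{mesh}(P) \to 0$. It suffices to compare a partition $P$ with a refinement $P'$, since arbitrary pairs are handled through a common refinement. If each $Q_i$ is subdivided into cells $Q_{ij}$ with sample points $p_{ij}$, then $\sum_j v_{ij} = v_i$ lets me write
\[ A_{P'} - A_P = \sum_i \sum_j \left[ (p_{ij}; v_{ij}\a_0) - (p_i; v_{ij}\a_0)\right] = \sum_i \sum_j \D_{p_{ij}-p_i}(p_i; v_{ij}\a_0). \]
Each summand is a first-order difference chain whose convex hull is the segment $[p_i,p_{ij}] \subset Q_i$, hence inside $U$, and $|\D_{p_{ij}-p_i}(p_i; v_{ij}\a_0)|_{B^1} = \|p_{ij}-p_i\|\,v_{ij} \le \mathrm{mesh}(P)\,v_{ij}$. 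Summing and invoking Definition \ref{def:norms} gives $\|A_{P'}-A_P\|_{B^1} \le \mathrm{mesh}(P)\,V$, so the net converges to a limit $\widetilde{\s} \in \hB_k^1(U)$ by completeness.

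The integration identity then follows from separate continuity of the pairing in Theorem \ref{thm:isomorphism}: for fixed $\o \in \B_k^1(U)$ the functional $\cint_{(\cdot)}\o$ is continuous on $\hB_k^1(U)$, so $\cint_{\widetilde{\s}}\o = \lim_P \cint_{A_P}\o = \int_\s \o$, where convergence of the Riemann sums uses continuity of the Lipschitz integrand $p \mapsto \o_p(\a_0)$ on the compact set $\s$. Uniqueness is immediate from nondegeneracy: if two chains in $\hB_k^1(U)$ both integrate to $\int_\s\o$ against every Lipschitz $\o$, their difference pairs to zero with all of $\B_k^1(U) \cong \hB_k^1(U)'$, hence vanishes; in particular the limit is independent of the chosen partitions and sample points.

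The main obstacle is the Cauchy estimate of the second step. In the mass norm $\|\cdot\|_{B^0}$ the chains $A_P$ do \emph{not} converge, since $\|A_{P'}-A_P\|_{B^0}$ need not be small: mass cannot detect the cancellation between the nearly coincident masses $(p_{ij};v_{ij}\a_0)$ and $(p_i;v_{ij}\a_0)$. The essential point is that the $B^1$ norm measures such a pair by the small displacement $\|p_{ij}-p_i\|$ rather than by $v_{ij}$, which is exactly what makes the first-difference decomposition collapse to $\mathrm{mesh}(P)\,V$. Care is also needed to keep every elementary difference chain inside $U$; this is guaranteed by taking the $Q_i$ convex with sample points within them, so that the relevant segments remain in $\s \subset U$.
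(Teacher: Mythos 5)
Your proposal is correct and follows essentially the same route as the proof the paper cites (\cite{OC}, Theorem 2.10.2): realize \( \widetilde{\s} \) as a \( B^1 \)-limit of Riemann-sum Dirac chains, with the key Cauchy estimate \( \|A_{P'}-A_P\|_{B^1} \le \mathrm{mesh}(P)\,V \) obtained by writing the discrepancy as first-order difference chains \( \D_{p_{ij}-p_i}(p_i; v_{ij}\a_0) \) whose convex hulls stay inside \( \s \subset U \), and then deducing the integral identity and uniqueness from the separately continuous, nondegenerate pairing of Theorem \ref{thm:isomorphism}. Your closing observation correctly isolates why the \( B^1 \) (rather than mass) norm is the right topology here, mirroring the paper's own geometric picture of Dirac elements as limits of renormalized shrinking cells.
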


The proof of this theorem may be found in \cite{OC} (Theorem 2.10.2).	
It follows that each bounded open set \( U \) in \( \R^n \) with the standard orientation of \( \R^n \) is uniquely represented by an $n$-chain \( \widetilde{U} \in \hB_n1(U) \). We define \emph{polyhedral chains} \( \sum_{i=1}^{s} a_i \widetilde{\s_i} \), \( a_i \in \R \) and \( \s_i \) an oriented affine $k$-cell.  This coincides with the classical definition as found in \cite{whitney}.  Polyhedral chains are dense in \( \hat{B}_k^r(U) \) (see \cite{OC} Theorem 2.10.5).

 If \( \s \) is an oriented affine $k$-cell in \( U \) and \( F \in M^1(U,W) \),  then \( F_* \widetilde{\s}  \in \hB_k^1(W)  \), and is called an \emph{algebraic $k$-cell}.   We remark that an algebraic $k$-cell \( F_* \widetilde{\s} \) is not the same as a singular $k$-cell \( F\s \) from algebraic topology. For example, if \( F(x) = x^2 \) and \( \s = (-1, 1) \), then the algebraic $1$-cell \( F_* \widetilde{\s} = 0 \), but the singular $1$-cell \( F \s \ne 0 \).  
	
 If \( (p;\a) \) is a simple $k$-element, we may write  \( (p;\a) = \lim_{i \to \i} 2^{ki} Q_i(p) \) where \( Q_1(p) \) is an oriented affine  $k$-cell in the $k$-direction of \( \a \) containing \( p \) with unit diameter, unit $k$-volume, and \( Q_i(p) \) is a homothetic replica of \( Q_1(p) \), containing \( p \), and with diameter \( 2^{-i} \) (see \cite{OC} Lemma 2.10.4).   This gives us the promised geometric interpretation of the simple $k$-element \( (p;\a) \) as a $k$-dimensional point mass, a  limit of shrinking renormalized oriented affine $k$-cells.   (We can also use any sequence of limiting chains as long as their supports tend to \( p \), their $k$-directions are the same and the masses tend to \( \|\a\|_0 \).  There is nothing special about \( k \)-cells here, except for computational convenience. )

	\subsection{Vector fields} 
	\label{sub:vector_fields}                              
Let \( {\cal V}^r(U) \) be the vector space of vector fields \( X \) on \( U \) whose local coordinate functions \( \phi_i \) are of class \( B^r \). In particular, if \( r = 0 \), then the time-\( t \) map of the flow of \( X \) is Lipschitz.   For \( X \in {\cal V}^r(U) \)  define \( \|X\|_{B^r} = \max\{\|\phi_i\|_{B^r}\} \).  Then \( \|\cdot\|_{B^r} \) is a norm on \( {\cal V}^r(U) \).
We say that \( X \) is of \emph{class \( B^r \)} if \( X \in {\cal V}^r(U) \).   	

 \section{Operators} 
 \label{sec:the_quantum_differential_complex}
 
 \subsection{Extrusion} 
\label{sub:extrusion}

  Let \( X \in {\cal V}^r(U) \).  Define the graded operator \emph{extrusion} \( E_X:{\cal A}_k(U) \to {\cal A}_{k+1}(U) \) by \( E_X (p;\a) := (p; X(p) \wedge \a) \) for all \( p \in U \) and \( \a \in \L_k(\R^n) \).     Then \( i_X \o := \o E_X  \) is the classical  interior product    \( i_X:{\cal A}_{k+1}(U)^* \to {\cal A}_k(U)^* \).    
	
\begin{thm}\label{thm:C}
 If \( X \in {\cal V}^r(U) \) and \( A \in {\cal A}_k(U) \),  then   \[  \|E_X(A)\|_{B^r} \le    n^2r\|X\|_{B^r}\|A\|_{B^r}.  \] 
\end{thm}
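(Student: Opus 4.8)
The plan is to argue directly from Definition~\ref{def:norms}, reducing the estimate to a single difference chain and then controlling the failure of $E_X$ to commute with translation. Since the $B^r$ norm is an infimum over difference-chain decompositions and $E_X$ is linear, it suffices to prove a bound of the form
\[ \|E_X\Delta_{\sigma^j}(p;\alpha)\|_{B^r}\ \le\ C\,\|X\|_{B^r}\,\|\sigma\|\,\|\alpha\|\ =\ C\,\|X\|_{B^r}\,|\Delta_{\sigma^j}(p;\alpha)|_{B^j} \]
for a single difference chain of each order $0\le j\le r$, and then sum over a near-optimal representation $A=\sum_{j,i}\Delta_{\sigma_{j_i}^j}(p_{j_i};\alpha_{j_i})$ and pass to the infimum. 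Because $E_X$ fixes base points, it does not move supports: the output is supported on the same vertices as the input, so the ``inside $U$'' requirement of Definition~\ref{def:norms} is inherited automatically and needs no separate argument.

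The key point is that $E_X$ wedges in the value of $X$ at each base point, and this value varies from vertex to vertex. Writing $g(q):=X(q)\wedge\alpha\in\L_{k+1}(\R^n)$, one has $E_X\Delta_{\sigma^j}(p;\alpha)=\sum_q \epsilon_q\,(q;g(q))$, the $j$-th mixed difference of the $\L_{k+1}$-valued function $g$. At first order this is the commutation relation $E_X T_u=T_u E_X+C_u$, where $C_u(p;\alpha)=(p+u;\,\Delta_u X(p)\wedge\alpha)$ and $\Delta_u X(p):=X(p+u)-X(p)$, hence $E_X\Delta_u=\Delta_u E_X+C_u$. Iterating this — that is, applying the discrete Leibniz rule for the mixed difference of the ``product'' (multiplication by $X$)$\,\cdot\,$(point mass) — expands $E_X\Delta_{\sigma^j}(p;\alpha)$ as a sum over subsets $S\subseteq\{1,\dots,j\}$: the directions $\{u_i:i\in S\}$ are spent differencing $X$, while the complementary directions survive as a genuine difference chain, so each summand has the form $\Delta_{\sigma_{S^c}}\big(q_S;\ (\Delta_{\sigma_S}X)(p)\wedge\alpha\big)$ for an appropriate base point $q_S$.

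For a fixed $S$ I would estimate this summand using three ingredients: submultiplicativity of mass under the wedge, $\|v\wedge\alpha\|\le\|v\|\,\|\alpha\|$ (which, together with $\|X(q)\|\le\sqrt n\,\|X\|_{B^r}$, is where the dimensional factor enters); the $B^r$-regularity of $X$, which bounds the $|S|$-th order difference by $\|(\Delta_{\sigma_S}X)(p)\|\le\|X\|_{B^{|S|}}\prod_{i\in S}\|u_i\|\le\|X\|_{B^r}\prod_{i\in S}\|u_i\|$ since $|S|\le j\le r$; and the defining seminorm of the surviving difference chain, namely $\prod_{i\notin S}\|u_i\|$ times the mass of its coefficient. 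Multiplying, each summand is $\lesssim \|X\|_{B^r}\big(\prod_{i\in S}\|u_i\|\big)\big(\prod_{i\notin S}\|u_i\|\big)\|\alpha\|=\|X\|_{B^r}\|\sigma\|\|\alpha\|$, so that \emph{every} term carries the full product smallness $\|\sigma\|=\prod_i\|u_i\|$.

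Summing over the subsets $S$ and then over the decomposition of $A$ yields the asserted inequality, the explicit constant being the product of the dimensional factor from the wedge and the number of terms generated by the expansion. I expect the main obstacle to be exactly this non-commutativity: one must organize the correction terms so that each retains the full factor $\|\sigma\|$ — the naive strategy of freezing $X$ at $p$ and collecting an order-$0$ remainder fails, since it produces coefficients of size $\sum_i\|u_i\|$ rather than $\prod_i\|u_i\|$ — and this forces the full mixed-Leibniz bookkeeping, which is what determines the constant. A cleaner but less elementary alternative is duality: since $i_X\omega=\omega E_X$, Theorem~\ref{thm:isomorphism} reduces the claim to the classical estimate $\|i_X\omega\|_{B^r}\le C\|X\|_{B^r}\|\omega\|_{B^r}$ for interior multiplication on forms, provable by the ordinary Leibniz rule; I would nonetheless treat the direct method as the primary route, since it relies only on Definition~\ref{def:norms}.
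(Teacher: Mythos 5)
The paper gives no in-text proof of Theorem \ref{thm:C}: it defers entirely to \cite{OC}, Theorem 8.2.2, so there is nothing to compare line by line. Judged on its own terms, your mechanism is sound. The reduction to a single difference chain is valid since \( E_X \) is linear and the \( B^r \) norm is an infimum over decompositions; the Newton-type expansion
\( E_X\Delta_{\sigma^j}(p;\alpha)=\sum_{S\subseteq\{1,\dots,j\}}\Delta_{\sigma_{S^c}}\bigl(p+u_S;\,(\Delta_{\sigma_S}X)(p)\wedge\alpha\bigr) \), with \( u_S=\sum_{i\in S}u_i \), is algebraically correct (expand \( g(q)=X(q)\wedge\alpha \) in forward differences based at \( p \) and resum over \( T\supseteq S \)); each term does carry the full factor \( \|\sigma\| \), exactly as you intend; and the ``inside \( U \)'' requirement does persist, though for a slightly different reason than the one you give: not merely because \( E_X \) fixes base points, but because the vertices of each \( \Delta_{\sigma_{S^c}}(p+u_S;\cdot) \) lie among the vertices of the original parallelepiped, so its convex hull sits inside the original convex hull. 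That deserves a sentence in a final write-up, since the estimate is applied to the \emph{expanded} difference chains, not to \( E_X A \) directly.

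The genuine gap is quantitative. Your expansion has \( 2^j \) subsets, each contributing at most about \( \sqrt n\,\|X\|_{B^r}\|\sigma\|\|\alpha\| \) (using \( \|v\wedge\alpha\|\le\|v\|\|\alpha\| \), \( \|\Delta_{\sigma_S}X(p)\|\le\sqrt n\,\|X\|_{B^{|S|}}\prod_{i\in S}\|u_i\| \), and monotonicity \( \|X\|_{B^s}\le\|X\|_{B^r} \) for \( s\le r \)), so the constant you obtain is of order \( \sqrt n\,2^r \) --- at best \( n2^r \) after the coordinate and wedge bookkeeping --- whereas the theorem asserts \( n^2r \), \emph{linear} in \( r \). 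For small \( r \) your constant is actually better, but for large \( r \) it is exponentially worse, so the stated inequality is not proved as written. To recover linear dependence on \( r \) one must organize the estimate differently, e.g., split \( E_X=\sum_{i=1}^n m_{\phi_i}E_{e_i} \) in coordinates, note \( \|E_{e_i}A\|_{B^r}\le\|A\|_{B^r} \) because constant-direction extrusion commutes with translation and hence with every \( \Delta_{\sigma^j} \), and invoke a multiplication-by-a-function bound with constant linear in \( r \); the naive binomial expansion cannot be resummed to \( O(r) \). The same caveat attaches to your duality alternative: it is legitimate only given the isometric characterization \( \|A\|_{B^r}=\sup_{\omega\ne 0}\cint_A\omega/\|\omega\|_{B^r} \) (implicit in Theorem \ref{thm:isomorphism} and used in Lemma \ref{fdV}), and since \( \|\omega\|_{B^r} \) is a supremum of seminorms, the Leibniz rule for \( i_X\omega \) again produces binomial constants, so it buys no improvement in the \( r \)-dependence. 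In short: correct architecture, and a correct diagnosis of the central pitfall (remainders of size \( \sum_i\|u_i\| \) versus \( \prod_i\|u_i\| \)), but as proposed the argument proves continuity of \( E_X \) with constant \( \sim n2^r \), not the stated bound \( n^2r\,\|X\|_{B^r}\|A\|_{B^r} \).
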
	 
(For a proof see \cite{OC}, Theorem 8.2.2.)     
	
	Therefore,  \( E_X \) extends to a continuous operator   \( E_X: \hB_k^r(U) \to \hB_{k+1}^r(U)   \).    It follows from the isomorphism theorem  \ref{thm:isomorphism} that    
\begin{equation}\label{EXthm}
  \cint_{E_X J} \o = \cint_J i_X \o.   
\end{equation}    
  	
	\subsection{Retraction} 
	\label{sub:contraction}         For \( \a = v_1 \wedge \cdots \wedge v_k \in \L_k(\R^n) \), let \( \hat{\a}_i := v_1 \wedge
	\cdots \hat{v_i} \cdots \wedge  v_1 \in \L_{k-1}(\R^n) \). 
	 For \( X\in {\cal V}^r(U) \) define the graded operator  \emph{retraction} \( E_X^\dagger:  {\cal A}_k(U)  \to {\cal A}_{k-1}(U) \)  by \( (p; \a)  \mapsto \sum_{i=1}^k (-1)^{i+1} \<X(p),v_i\>  (p;  \hat{\a}_i),  
	\)  for \( p \in U \).  A straightforward calculation shows this to be the adjoint of wedge product with \( X(p) \) at a point \( p \), and thus is well-defined.       The dual operator on forms is wedge product with the \( 1 \)-form  \( X^\flat \) representing the vector field \( X \) via the inner product with \( X^\flat \wedge \cdot: {\cal A}_{k-1}(U)^* \to {\cal A}_k(U)^*  \).   
\begin{thm}\label{thm:Cdagger}
 If \( X \) is a vector field on \( U \) of class \( B^r \) and \( J \in  \hB_k^r(U) \),  then \[  \|E_X^\dagger(J)\|_{B^r} \le    k {n \choose k} \|X\|_{B^r}\|J\|_{B^r}  \]   or all \( r \ge  1 \).
\end{thm}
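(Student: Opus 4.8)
The plan is to estimate \( \|E_X^\dagger(A)\|_{B^r} \) for a Dirac chain \( A \in {\cal A}_k(U) \) only; since Dirac chains are dense in \( \hB_k^r(U) \), a bound with a fixed constant on this subspace extends \( E_X^\dagger \) continuously to all of \( \hB_k^r(U) \) with the same constant. I would obtain the bound on Dirac chains by dualizing, exploiting that \( E_X^\dagger \) is algebraic except for the pointwise dependence of \( X \) on the base point. By the Isomorphism Theorem \ref{thm:isomorphism}, \( \B_{k-1}^r(U) \) is the Banach dual of \( \hB_{k-1}^r(U) \), so Hahn--Banach gives the isometric formula
\[
  \|E_X^\dagger(A)\|_{B^r} = \sup\left\{ \left| \cint_{E_X^\dagger(A)} \o \right| : \o \in \B_{k-1}^r(U),\ \|\o\|_{B^r} \le 1 \right\}.
\]
For a Dirac chain \( A \) the adjoint identity \( \cint_{E_X^\dagger(A)} \o = \cint_A (X^\flat \wedge \o) \) is merely the termwise statement that contraction against \( X(p) \) is adjoint to wedge with \( X^\flat \), as recorded just before the theorem; no continuity subtlety arises because \( A \) is a finite sum.

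With the adjoint identity in hand, separate continuity and nondegeneracy of the pairing give \( |\cint_A (X^\flat \wedge \o)| \le \|A\|_{B^r}\,\|X^\flat \wedge \o\|_{B^r} \), so the whole theorem reduces to the purely analytic product estimate on the form side,
\[
  \|X^\flat \wedge \o\|_{B^r} \le k \binom{n}{k} \|X\|_{B^r}\,\|\o\|_{B^r} \qquad (\o \in \B_{k-1}^r(U)).
\]
To prove this I would expand \( \|\cdot\|_{B^r} = \sup_{|\ell|\le r-1}\{\|\cdot\|_{\sup},\,|D^\ell \cdot|_{Lip}\} \) componentwise and apply the Leibniz rule \( D^\ell(X^\flat \wedge \o) = \sum_{m \le \ell}\binom{\ell}{m} D^m X^\flat \wedge D^{\ell-m}\o \), bounding each factor by the corresponding \( B^r \) seminorm of \( X^\flat \) (hence of \( X \)) and of \( \o \), and treating the Lipschitz part of the top derivative the same way. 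The combinatorial constant enters in two places: a factor \( k \) from the \( k \) summands defining \( E_X^\dagger(p;\a)=\sum_{i=1}^k(-1)^{i+1}\<X(p),v_i\>(p;\hat{\a}_i) \) (dually, the rank of wedging into \( \L_k \)), and a factor \( \binom{n}{k} \) from comparing the comass of a wedge to the product of the factors' comasses by summing over the \( \binom{n}{k} \) basis \( k \)-covectors.

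The step I expect to be the main obstacle is forcing the constant down to exactly \( k\binom{n}{k} \): both the Leibniz expansion and the componentwise comass comparison throw off combinatorial factors, and showing that they recombine to precisely \( k\binom{n}{k} \) — rather than some larger admissible bound — requires care in how the sup-norm and Lipschitz pieces of the \( B^r \) norm are reassembled. A more self-contained alternative, which avoids duality and parallels the extrusion estimate of Theorem \ref{thm:C}, is to work directly on a near-optimal difference-chain decomposition \( A = \sum_{j,i}\D_{\s_{j_i}^j}(p_{j_i};\a_{j_i}) \) and use the discrete Leibniz identity
\[
  E_X^\dagger(\D_u(p;\a)) - \D_u\big(E_X^\dagger(p;\a)\big) = \sum_{i=1}^k (-1)^{i+1}\<X(p+u)-X(p),\,v_i\>\,(p+u;\hat{\a}_i),
\]
whose right-hand side is a contraction of the translated chain against the first difference of the field; iterating it across the \( j \) factors of \( \s^j \) yields a binomial expansion whose top-order term is controlled by \( \|X\|_{\sup} \) and whose lower-order terms are controlled by the Lipschitz and derivative content of \( \|X\|_{B^r} \), every resulting difference chain remaining inside \( U \) by the convex-hull hypothesis. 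Either route isolates the same crux, the combinatorial accounting, while the genuinely analytic content is just the Leibniz product rule and is routine.
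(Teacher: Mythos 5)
The paper itself contains no proof of Theorem \ref{thm:Cdagger}; it defers entirely to Theorems 8.3.3 and 8.3.5 of \cite{OC}, so your attempt has to stand on its own — and it has a genuine gap at precisely the step you flag as ``the main obstacle.'' There is a dichotomy. If, as you assume, the duality of Theorem \ref{thm:isomorphism} is isometric (the paper only asserts an isomorphism with a separately continuous pairing, so this is already an unsupported step), then your key lemma, the form-side estimate \( \|X^\flat \wedge \o\|_{B^r} \le k\binom{n}{k}\|X\|_{B^r}\|\o\|_{B^r} \), is not merely delicate but false as stated: take \( n=k=1 \), \( r=1 \), \( U=(0,1) \), \( X = x\,e_1 \), \( \o = f = x \). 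Then \( \|X\|_{B^1} = \|f\|_{B^1} = 1 \) while \( \|X^\flat \wedge f\|_{B^1} = \|x^2\,dx\|_{B^1} = \max\{\sup x^2,\ \mathrm{Lip}(x^2)\} = 2 > k\binom{n}{k} = 1 \). The same failure is visible on the chain side: for \( J = (p+h;\tfrac{1}{h}e_1) - (p;\tfrac{1}{h}e_1) \) one has \( \|J\|_{B^1} \le 1 \) by Definition \ref{def:norms}, yet \( \cint_{E_X^\dagger J} x = \tfrac{(p+h)^2-p^2}{h} = 2p+h \to 2 \), so no ``combinatorial accounting'' can recombine the Leibniz terms to the constant \( k\binom{n}{k} \) — the target inequality you reduce to is too strong. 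If instead the duality is only a topological isomorphism, your Hahn--Banach formula degrades by unknown equivalence constants and the reduction loses the constant at step one. Either way the proposal does not prove the stated bound.

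Your second route is structurally sound — the discrete commutator identity you write for \( E_X^\dagger \D_u(p;\a) \) is correct and is exactly the device used in \cite{OC} for estimates of this type — but iterating it across the \( j \) factors of \( \s^j \) produces a binomial expansion with \( 2^j \) terms, pairing higher difference quotients of \( X \) against lower-order difference chains, and hence a constant growing with \( r \) (compare the factors \( n^2 r \), \( n2^r \), and \( 2kn^32^r \) appearing in Theorems \ref{thm:C}, \ref{thm:A}, and \ref{thm:E}, all obtained by this method). So neither of your routes, as described, can deliver the \( r \)-uniform constant \( k\binom{n}{k} \): the skeleton (dualize to \( X^\flat \wedge \cdot \), or estimate directly on a near-optimal difference-chain decomposition) is the right one, but the precise constant is the entire content of the statement, and the mechanism producing it must come from the specific machinery of \cite{OC}, which your sketch does not reconstruct.
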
   
	        	  It follows that     \( E_X^\dagger: \hB_k^r(U) \to \hB_{k-1}^r(U)   \) and \(   X^\flat \wedge \cdot: {\cal B}_{k-1}^r(U) \to {\cal B}_k^r(U)  \) are continuous graded operators with 
	\begin{equation}\label{eq:EXD}
	  \cint_{E_X^\dagger J} \o = \cint_J  X^\flat \wedge \o 
	\end{equation}
	for all   \( J \in \hB_k^{r+1}(U) \) and \( \o \in {\cal B}_k^r(U) \)
	  (see \cite{OC} Theorems 8.3.3 and  8.3.5)     

 \subsection{Boundary} 
 \label{sub:boundary}
 
There are several equivalent ways to define the boundary operator  \( \p:\hB_k^r(U) \to \hB_{k-1}^{r+1}(U) \) for \( r \ge 0 \).     We have found it very useful to define boundary on Dirac chains directly.   For \( v \in \R^n \), and a simple $k$-element \( (p;\a) \) with \( p \in U \), let \( P_v (p;\a) := \lim_{t \to 0} (p+tv;\a/t) - (p;\a/t) \).  It is perhaps surprising that this limit is nonzero if \( \a \ne 0 \). It is shown in \cite{OC} (Lemma 3.3.1) that this limit exists as a well-defined element of \( \hB_k^2(U) \).  We may then linearly extend \( P_v:{\cal A}_k(U) \to \hB_k^1(U) \).  Moreover, \( \|P_v (A)\|_{B^{r+1}} \le \|v\|\|A\|_{B^r} \) for all \( A \in {\cal A}_k(U) \)   (see \cite{OC} Lemma 3.3.2). For an orthonormal basis \( \{e_i\} \) of \( \R^n \), set \( \p := \sum P_{e_i} E_{e_i}^\dagger  \).  Since \( P_{e_i} \) and \( E_{e_i} \) are continuous,  \( \p \) is a well-defined continuous operator \( \p:\hB_k^r(U) \to \hB_{k-1}^{r+1}(U) \) that restricts to the classical boundary operator on polyhedral $k$-chains independent of choice of \( \{e_i\} \) (see  \cite{OC} (Corollary 3.5.2 and Lemma 3.5.5)).  

\begin{thm}\label{thm:D}[General Stokes' Theorem]
 The bigraded operator boundary  \( \p:\hB_k^r(U)  \to  \hB_{k-1}^{r+1}(U) \)  is continuous  with \( \p \circ \p = 0 \),  and  \( \|\p J\|_{B^{r+1}} \le k n\|J\|_{B^r} \)   for all \( J \in \hB_k^r \) and \( r \ge 0 \).   Furthermore,  if \( \o \in {\cal B}_{k-1}^r(U) \) is a differential form and \( J \in \hB_k^{r-1}(U) \) is a differential chain, then   \[ \cint_{\p J} \o = \cint_J d \o. \]
\end{thm}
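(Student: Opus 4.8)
The plan is to prove Theorem \ref{thm:D} by assembling it from the operator estimates and identities already established, since the boundary operator has been \emph{defined} as $\p = \sum_i P_{e_i} E_{e_i}^\dagger$. First I would establish the norm estimate. Because $E_{e_i}^\dagger: \hB_k^r(U) \to \hB_{k-1}^r(U)$ satisfies $\|E_{e_i}^\dagger J\|_{B^r} \le (k-1+1)\binom{n}{k}\|e_i\|\|J\|_{B^r}$ from Theorem \ref{thm:Cdagger} (with $\|e_i\|=1$), and $P_{e_i}: \hB_{k-1}^r(U) \to \hB_{k-1}^{r+1}(U)$ satisfies $\|P_{e_i} A\|_{B^{r+1}} \le \|e_i\|\|A\|_{B^r}$ from the pushforward-difference lemma cited in \S\ref{sub:boundary}, composing the two gives a bound of the form $C_{n,k}\|J\|_{B^r}$ for each summand; summing over the $n$ basis vectors yields $\|\p J\|_{B^{r+1}} \le C'_{n,k}\|J\|_{B^r}$. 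The claimed constant $kn$ should fall out by tracking the factors carefully and using that retraction lowers degree, so I would verify that the combined constant from the $k$ nonzero terms of $E_{e_i}^\dagger$ across $n$ directions telescopes to exactly $kn$ rather than a larger bound; this bookkeeping is routine but is where the stated sharp constant must be checked.

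Next I would prove the Stokes identity $\cint_{\p J}\o = \cint_J d\o$. The strategy is to reduce it to the already-established dual identities for the constituent operators. From equation (\ref{eq:EXD}) we have $\cint_{E_{e_i}^\dagger J}\eta = \cint_J e_i^\flat \wedge \eta$, and the dual of $P_{e_i}$ (the directional-derivative operator on chains) pairs against the Lie derivative $L_{e_i}$ on forms, so that $\cint_{P_{e_i} A}\o = \cint_A L_{e_i}\o$. Combining these along the definition $\p = \sum_i P_{e_i} E_{e_i}^\dagger$ gives
\[
  \cint_{\p J}\o = \sum_i \cint_{P_{e_i} E_{e_i}^\dagger J}\o = \sum_i \cint_{E_{e_i}^\dagger J} L_{e_i}\o = \sum_i \cint_J e_i^\flat \wedge L_{e_i}\o.
\]
The crux is then the pointwise algebraic identity $d\o = \sum_i e_i^\flat \wedge L_{e_i}\o$ for an orthonormal basis $\{e_i\}$, which is the Cartan-style decomposition of the exterior derivative into directional derivatives wedged with dual covectors. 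I would establish this first on smooth forms by a direct computation in coordinates (where $L_{e_i}$ reduces to partial differentiation of the coefficient functions), then extend to forms of class $B^r$ by the density of polyhedral chains (Theorem \ref{thm:B} and the density statement following it) together with separate continuity of the integral pairing from Theorem \ref{thm:isomorphism}.

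For $\p \circ \p = 0$, I would again reduce to the dual: since $\cint_{\p\p J}\o = \cint_J dd\o = 0$ for all smooth $\o$ by the classical $d^2=0$, nondegeneracy of the pairing (Theorem \ref{thm:isomorphism}) forces $\p\p J = 0$. This requires knowing $\p\p J \in \hB_{k-2}^{r+2}(U)$ pairs trivially against a dense set of forms, which follows from the same density and separate-continuity apparatus. The bigraded continuity statement $\p:\hB_k^r \to \hB_{k-1}^{r+1}$ is then immediate from the norm estimate already obtained, since a bounded linear map between the dense subspaces $\A_k(U)$ extends uniquely to the completions.

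The main obstacle I anticipate is the identity $d\o = \sum_i e_i^\flat \wedge L_{e_i}\o$ and, relatedly, pinning down the dual of $P_{e_i}$ as the Lie/directional derivative $L_{e_i}$ on forms. The operator $P_v$ is defined by a subtle renormalized limit $\lim_{t\to 0}\bigl((p+tv;\a/t)-(p;\a/t)\bigr)$ whose nonvanishing is noted to be ``perhaps surprising,'' so I would need to carefully confirm that its action on chains is dual precisely to $L_v$ on forms (not to $L_v$ plus lower-order terms), most safely by checking the pairing on simple $k$-elements and passing to the limit using the uniform estimate $\|P_v A\|_{B^{r+1}} \le \|v\|\|A\|_{B^r}$ to justify the interchange of limit and integration. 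Once that dual identity is secured, the rest is an orchestration of cited results rather than new analysis.
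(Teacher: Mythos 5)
The paper never actually proves Theorem \ref{thm:D} --- it defers entirely to \cite{OC} (Theorems 3.5.2, 3.5.4, 6.7.1, 6.7.2) --- so your proposal can only be measured against the construction the paper sketches in \S\ref{sub:boundary}, and there it reconstructs the intended argument faithfully: \( \p=\sum_i P_{e_i}E_{e_i}^\dagger \), continuity from the primitive operator estimates, and the Stokes identity by dualizing each primitive operator via \( d\o=\sum_i e_i^\flat\wedge L_{e_i}\o \) (which is correct for a constant orthonormal frame). You also sidestepped the one genuine trap: equation \eqref{eq:PXL} is derived in the paper \emph{from} Theorem \ref{thm:D}, so quoting it here would be circular, and your decision to verify \( \cint_{P_v A}\o=\cint_A L_v\o \) directly from the renormalized-limit definition of \( P_v \) on simple elements (where, for constant \( v \), \( L_v \) is just directional differentiation of coefficients) is exactly the right repair, at least for \( r\ge 2 \) where the pointwise derivative of \( \o \) exists everywhere.

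Two refinements are still needed. First, your constant bookkeeping as written does not produce \( kn \): composing Theorem \ref{thm:Cdagger} with \( \|P_{e_i}A\|_{B^{r+1}}\le\|A\|_{B^r} \) and summing over \( i \) gives \( kn\binom{n}{k} \). The fix is not termwise estimation but linearity of \( v\mapsto P_v \), which collapses \( \sum_i \<e_i,v_j\>P_{e_i}=P_{v_j} \); then on a simple element with orthogonal factors, \( \p(p;v_1\wedge\cdots\wedge v_k)=\sum_{j=1}^k(-1)^{j+1}P_{v_j}(p;\hat{\a}_j) \), and each term has \( B^{r+1} \)-norm at most \( \|v_j\|\|\hat{\a}_j\|=\|\a\| \), yielding a bound of order \( k\|\a\| \), well within the stated \( kn \) once difference chains are handled. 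Second, your duality proof of \( \p\circ\p=0 \) requires \( \p\p J \) to vanish against \emph{all} forms of class \( B^{r+2} \), whereas \( dd\o=0 \) is only unproblematic for smooth \( \o \), and smooth forms are \emph{not} dense in \( {\cal B}^r \) in the \( B^r \) norm (this is the Lipschitz-versus-\( C^\infty \) issue); the correct patch is the support machinery (Definition \ref{def:support}, \cite{OC} Theorems 5.0.6 and 6.4.5), which guarantees that a chain pairing to zero against every smooth compactly supported form is zero. Alternatively, and more cheaply, \( \p\p=\sum_{i,j}P_{e_i}P_{e_j}E_{e_i}^\dagger E_{e_j}^\dagger \) vanishes purely algebraically on Dirac chains, since constant-vector prederivatives commute (as noted after Definition \ref{def:preder}) while the retractions anticommute, \( E_{e_i}^\dagger E_{e_j}^\dagger=-E_{e_j}^\dagger E_{e_i}^\dagger \); swapping \( i\leftrightarrow j \) shows the double sum equals its own negative. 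That route avoids duality and density entirely and is likely closer to what \cite{OC} does.
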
 
   	 The proof of this may be found in \cite{OC} (Theorems  3.5.2, 3.5.4, 6.7.1, and 6.7.2). 

	We say a differential $k$-chain \( J \in \hB_k(U)  \) is a \emph{differential $k$-cycle in \( U \)} if $\p J = 0.$ 

\subsection{Prederivative} 
\label{sub:prederivative}

\begin{defn}\label{def:preder}
	Suppose \( X \in {\cal V}^r(U) \).  Define the  linear map \emph{\textbf{prederivative}} \( P_X:  \hB_k^r(U) \to \hat{\cal
B}_k^{r+1}(U) \) by \[  P_X  := E_X \p  + \p E_X. \] 
\end{defn} This agrees with the previous definition of \( P_v \) for \( v \in \R^n \) in \( \S\ref{sub:boundary} \) since \\  
 \(  E_v \p + \p E_v = \sum_i  P_{e_i}( E_v E_{e_i}^\dagger - E_{e_i}^\dagger E_v) = \sum_i P_{e_i} \<v,e_i\> I = P_v.  \) 
 
 It follows from Theorems  \ref{thm:C} and  \ref{thm:D} that both \( E_X \) and \( \p \) are
continuous.  Therefore,  \( P_X \) is continuous.  
Its dual operator \( L_X \) is the classically defined Lie derivative    since \( L_X = i_X d + d i_X \) by Theorems \ref{thm:C} and  \ref{thm:D}, and this uniquely determines \( L_X \).   
It follows that \( P_X: \hB_k^r(U) \to \hB_k^{r+1}(U)  \) and  Lie derivative \( L_X: {\cal B}_k^r(U) \to {\cal B}_k^{r-1}(U) \) are continuous bigraded linear operators. By the isomorphism theorem \ref{thm:isomorphism}, this implies the duality relation:   
\begin{equation}\label{eq:PXL}
\cint_{P_X J} \o = \cint_J L_X \o	
\end{equation}     
for all   \( J \in \hB_k^{r-1}(U) \) and \( \o \in {\cal B}_k^r(U) \). Furthermore, \( P_X \p = \p P_X \) since \( P_X = \p E_X + E_X \p \) implies \( \p E_X = \p E_X \p = P_X \p \).      We remark that \( P_u \circ P_v = P_v \circ P_u \) for fixed \( u, v \in \R^n \), but for non-constant vector fields \( X, Y \), the operators \( P_X, P_Y \) do not necessarily commute\footnote{The universal enveloping algebra can be used instead of the symmetric algebra.}.   

\begin{thm}\label{thm:E}
	If \( X \in {\cal V}^r(U) \),  then  prederivative  
	\begin{align*} P_X: \hB_k^r(U) \to \hB_k^{r+1}(U)  
	\end{align*} satisfies
	\[ \|P_X(J)\|_{B^{r+1}} \le      2kn^3 2^r \|X\|_{B^{r}} \|J\|_{B^r} \] for all \( J \in \hB_k^r(U) \) and \( 0 \le r < \i \).    
\end{thm}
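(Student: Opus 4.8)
The plan is to bound $P_X = E_X \partial + \partial E_X$ by composing the already-established operator norm estimates for extrusion (Theorem \ref{thm:C}) and boundary (Theorem \ref{thm:D}), being careful to track the grading shifts in $r$ so that both composite terms land in the correct target norm $\|\cdot\|_{B^{r+1}}$. Since $P_X$ is defined as a sum of two compositions, the natural approach is to estimate $\|E_X \partial J\|_{B^{r+1}}$ and $\|\partial E_X J\|_{B^{r+1}}$ separately and add them, invoking the triangle inequality.

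First I would estimate $\|\partial E_X J\|_{B^{r+1}}$. By Theorem \ref{thm:D}, boundary raises the form-degree index by one and satisfies $\|\partial K\|_{B^{r+1}} \le (k+1)n\|K\|_{B^r}$ applied to the $(k+1)$-chain $K = E_X J$. Then by Theorem \ref{thm:C}, $\|E_X J\|_{B^r} \le n^2 r \|X\|_{B^r}\|J\|_{B^r}$, so this term is bounded by a constant of the shape $(k+1)n \cdot n^2 r \|X\|_{B^r}\|J\|_{B^r}$. For the other term $\|E_X \partial J\|_{B^{r+1}}$, I would first apply boundary to get $\partial J \in \hB_{k-1}^{r+1}(U)$ with $\|\partial J\|_{B^{r+1}} \le kn\|J\|_{B^r}$, and then apply extrusion at the elevated smoothness level $r+1$, giving a factor $n^2(r+1)\|X\|_{B^{r+1}}$ from Theorem \ref{thm:C}. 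The subtlety here is that the second term appears to need $\|X\|_{B^{r+1}}$, whereas the stated bound only involves $\|X\|_{B^r}$; I would resolve this either by noting that in the intended application $X$ is taken smoother than $r$ (so the norms coincide up to the relevant order), or, more likely, by exploiting the commutation $\partial E_X = E_X \partial + P_X$ is \emph{not} available directly, and instead re-deriving the extrusion estimate so that it reads in terms of $\|X\|_{B^r}$ at the appropriate grading. The cleanest route is to observe that $E_X$ acting into degree $r+1$ can be controlled by $\|X\|_{B^r}$ when paired against the boundary, since $\partial J$ already carries the extra derivative that $E_X$ would otherwise demand of $X$.

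Summing the two contributions, the total is of the form $\bigl[(k+1)n^3 r + kn^3(r+1)\bigr]\|X\|_{B^r}\|J\|_{B^r}$, and I would then bound the bracketed constant crudely by $2kn^3 2^r$ (or a comparably clean expression), using $k+1 \le 2k$ for $k \ge 1$ and $r, r+1 \le 2^r$ for $r$ in the relevant range, together with the $2^r$ factor that naturally arises from the telescoping of difference chains inside the $B^r$ norm when re-expanding compositions. The factor $2^r$ is characteristic of these bigraded estimates and appears already in Theorem \ref{thm:A}, so I would lean on the same mechanism.

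The main obstacle I anticipate is the \emph{grading bookkeeping} for the term $E_X \partial J$: naively composing Theorems \ref{thm:C} and \ref{thm:D} produces a dependence on $\|X\|_{B^{r+1}}$ rather than $\|X\|_{B^r}$, and the stated theorem only assumes $X \in {\cal V}^r(U)$. Reconciling this requires either a sharper version of the extrusion bound that trades the smoothness of the chain $\partial J$ (which lives in $\hB^{r+1}$) against the smoothness demanded of $X$, or a direct estimate of $P_X$ on Dirac chains that avoids splitting into the two pieces entirely. I would first attempt the direct composition and, if the grading mismatch persists, fall back on estimating $P_X(p;\a)$ on a single Dirac $k$-element by expanding $E_X\partial + \partial E_X$ explicitly and recognizing the result as a first-order difference operator in $X$, which keeps all constants at the $\|X\|_{B^r}$ level.
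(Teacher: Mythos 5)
You have correctly identified the fatal point yourself, but none of your proposed repairs actually closes it, so as written the argument has a genuine gap. The term \( \p E_X J \) is fine: \( E_X \) maps Dirac chains to Dirac chains and preserves the \( r \)-grading, so Theorems \ref{thm:C} and \ref{thm:D} compose to give \( \|\p E_X J\|_{B^{r+1}} \le (k+1)n^3 r\, \|X\|_{B^r}\|J\|_{B^r} \). But for \( E_X \p J \) the difficulty is not mere bookkeeping: if \( A \) is a Dirac chain, \( \p A \) is not a Dirac chain but a limit in \( \hB_{k-1}^{r+1}(U) \), so applying \( E_X \) to it requires continuity of \( E_X \) at level \( r+1 \), and Theorem \ref{thm:C} then charges \( \|X\|_{B^{r+1}} \) and demands \( X \in {\cal V}^{r+1}(U) \) — strictly stronger than the hypothesis \( X \in {\cal V}^r(U) \). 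Your three suggested escapes are all unexecuted: taking \( X \) smoother changes the statement; the assertion that ``\( \p J \) already carries the extra derivative that \( E_X \) would otherwise demand of \( X \)'' is precisely what needs proof, not an observation — extrusion multiplies by \( X(p) \) pointwise, and expanding \( E_X \) on a difference chain \( \D_{\s^{r+1}}(p;\a) \) from a \( B^{r+1} \)-decomposition of \( \p J \) produces, via the discrete Leibniz rule, \( (r+1) \)-st order differences of the coordinates of \( X \), which is exactly the \( \|X\|_{B^{r+1}} \) obstruction reappearing; and the fallback of estimating \( P_X(p;\a) \) directly is the right idea but you supply none of the computation. The shape of the constant is itself a warning: composing Theorems \ref{thm:C} and \ref{thm:D} yields a constant polynomial in \( r \), whereas the stated \( 2^r \) is the signature of a direct estimate on difference chains, the same mechanism behind the pushforward bound of Theorem \ref{thm:A}; so the theorem is evidently not obtained by the composition route you lead with.

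For comparison: the paper gives no in-text proof at all, deferring to Theorem 8.4.2 of \cite{OC}, so there is no internal argument for your version to track. A self-contained proof at the stated hypothesis would have to carry out what you only sketch as a last resort: estimate \( P_X \) directly on a difference \( k \)-chain \( \D_{\s^j}(p;\a) \), \( 0 \le j \le r \), showing the result decomposes into difference chains of order up to \( j+1 \) whose coefficients involve differences of \( X \) of order at most \( j \le r \) — the extra derivative is absorbed by the raised difference order on the chain side, not demanded of \( X \), and the doubling at each order is where \( 2^r \) arises. Alternatively one can dualize: show the Lie derivative \( L_X = i_X d + d i_X \) maps \( \B_k^{r+1}(U) \to \B_k^r(U) \) with operator norm controlled by \( \|X\|_{B^r} \) (each Leibniz term loses one order on the form side, matching \( X \in {\cal V}^r(U) \)), then transfer to \( P_X \) through the pairing of Theorem \ref{thm:isomorphism}. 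Either route keeps the constants at the \( \|X\|_{B^r} \) level; your proposal, as it stands, proves the theorem only under the stronger hypothesis \( X \in {\cal V}^{r+1}(U) \) with \( \|X\|_{B^{r+1}} \) in the bound.
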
  

(See Theorem 8.4.2 of \cite{OC})    

Prederivative gives us a way to ``geometrically differentiate'' a differential chain in the infinitesimal directions determined by a vector field,  even when the support of the differential chain is highly nonsmooth, and without using any functions or forms.

\begin{thm}\label{thm:F}	
 If \( X \in {\cal V}^{r+1}(U) \)  and \( J \in \hat{B}_k^r(U), r \ge 0 \), has compact support, then \( P_X J \in \hat{B}_k^{r+1}(U) \) with \[ 	P_X J = \lim_{t \to 0}( \phi_{t*} J/t - J/t) \] where \( \phi_t \) is the time-\( t \) map of the flow of \( X \).
\end{thm}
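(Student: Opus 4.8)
\section*{Proof proposal for Theorem~\ref{thm:F}}

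The plan is to prove norm convergence of the difference quotients $(\phi_{t*}J - J)/t$ in $\hB_k^{r+1}(U)$ and then to identify their limit with $P_X J$ by duality against differential forms. Two preliminary reductions make the setup clean. First, since $J$ has compact support, $\phi_t$ is defined on a fixed neighborhood of $|J|$ for all sufficiently small $|t|$, so $\phi_{t*}J$ is a well-defined element of $\hB_k^{r+1}(U)$ with support in a fixed compact set $K\subset U$. Second, because the $B^r$ norms decrease in $r$, we have $J\in\hB_k^r(U)\subset\hB_k^{r+1}(U)$, so $\phi_{t*}J$, $J$, and the prospective limit all live in the single Banach space $\hB_k^{r+1}(U)$ in which the limit is taken.

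To identify the limit I would use the nondegenerate pairing of Theorem~\ref{thm:isomorphism}. For $\omega\in\mathcal{B}_k^{r+1}(U)$, Theorem~\ref{thm:A} gives $\cint_{\phi_{t*}J}\omega=\cint_J\phi_t^*\omega$, whence $\cint_{(\phi_{t*}J-J)/t}\omega=\cint_J(\phi_t^*\omega-\omega)/t$. The classical Cartan formula on forms yields $(\phi_t^*\omega-\omega)/t\to L_X\omega$ in the $B^r$ norm as $t\to0$; this is where the hypothesis $X\in\mathcal{V}^{r+1}(U)$ enters, so that the difference quotient converges in $\mathcal{B}_k^r(U)$ after losing one degree of smoothness, with $L_X\colon\mathcal{B}_k^{r+1}(U)\to\mathcal{B}_k^r(U)$. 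Since $J\in\hB_k^r(U)$, the functional $\cint_J$ is continuous on $\mathcal{B}_k^r(U)$, so $\cint_J(\phi_t^*\omega-\omega)/t\to\cint_J L_X\omega=\cint_{P_XJ}\omega$ by \eqref{eq:PXL}. Consequently, once the limit $L:=\lim_{t\to0}(\phi_{t*}J-J)/t$ is known to exist in $\hB_k^{r+1}(U)$, separate continuity of the pairing gives $\cint_L\omega=\cint_{P_XJ}\omega$ for every $\omega$, and nondegeneracy forces $L=P_XJ$.

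It remains to establish the norm limit, which I would obtain from a uniform first-order Taylor estimate on Dirac chains together with density. For a simple $k$-element write $\phi_{t*}(p;\alpha)=(\phi_t(p);(\phi_t)_{p*}\alpha)$, and expand using the flow equation $\dot\phi_t(p)=X(\phi_t(p))$ and the linearized flow for $(\phi_t)_{p*}$. The first-order term splits into a base-point-translation contribution, which is $P_{X(p)}(p;\alpha)$ in the sense of the limit defining $P_v$ in \S\ref{sub:boundary}, plus a fiberwise contribution coming from $\tfrac{d}{dt}\big|_{0}(\phi_t)_{p*}\alpha$; for a constant field $X=v$ the fiberwise term vanishes and this reduces to the identity $E_v\p+\p E_v=P_v$ recorded after Definition~\ref{def:preder}, so the leading coefficient is $P_X(p;\alpha)$ in general. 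I would then bound the remainder $\phi_{t*}(p;\alpha)-(p;\alpha)-t\,P_X(p;\alpha)$ in the $B^{r+1}$ norm by $o(t)\,\|(p;\alpha)\|_{B^r}$, uniformly for $p$ in the relevant compact set, using Taylor's theorem with integral remainder for the flow together with the operator estimates of Theorems~\ref{thm:C}, \ref{thm:Cdagger}, \ref{thm:D} and the bound $\|P_v(A)\|_{B^{r+1}}\le\|v\|\,\|A\|_{B^r}$ from \S\ref{sub:boundary}. Such a uniform estimate shows that the operators $(\phi_{t*}-I)/t$ are uniformly bounded from $\hB_k^r(U)$ to $\hB_k^{r+1}(U)$ for small $t$ and converge to $P_X$ on the dense subspace of compactly supported Dirac chains; the standard principle that uniformly bounded operators converging on a dense set converge everywhere then gives the limit for every compactly supported $J\in\hB_k^r(U)$.

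The main obstacle is this uniform remainder estimate. The $B^{r+1}$ norm is defined by an infimum over decompositions into difference chains, so it does not separate the motion of the base point from the transformation of $\alpha$, yet the nonlinearity of the flow couples the two; one must therefore produce an explicit difference-chain decomposition of the second-order error and show it is genuinely $o(t)$ in this norm with a constant independent of $p$ and of $\|\alpha\|$. The extra degree of smoothness $X\in\mathcal{V}^{r+1}(U)$ and the compactness of $|J|$ are exactly what is needed to bound these second-order flow terms in $\hB_k^{r+1}(U)$ rather than merely in the weaker topology, thereby upgrading the pointwise/dual convergence of the second paragraph to the required norm convergence.
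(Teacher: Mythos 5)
You should note at the outset that the paper contains no internal proof of Theorem \ref{thm:F}: it is quoted from \cite{OC} (Theorem 8.4.4), so your argument must be judged on its own merits. Your identification step is correct and is the natural one: granting that \( L:=\lim_{t\to0}(\phi_{t*}J-J)/t \) exists in \( \hB_k^{r+1}(U) \), Theorem \ref{thm:A} gives \( \cint_{(\phi_{t*}J-J)/t}\omega=\cint_J(\phi_t^*\omega-\omega)/t \), the convergence \( (\phi_t^*\omega-\omega)/t\to L_X\omega \) in \( {\cal B}_k^r \) (using \( X\in{\cal V}^{r+1} \) and compactness of \( |J| \)) together with \eqref{eq:PXL} gives \( \cint_L\omega=\cint_{P_XJ}\omega \) for all \( \omega\in{\cal B}_k^{r+1}(U) \), and nondegeneracy of the pairing in Theorem \ref{thm:isomorphism} forces \( L=P_XJ \).

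The genuine gap is in your existence step. A remainder bound \( \|\phi_{t*}(p;\alpha)-(p;\alpha)-t\,P_X(p;\alpha)\|_{B^{r+1}}\le o(t)\,\|(p;\alpha)\|_{B^r} \), however uniform in \( p \), cannot deliver the uniform boundedness of \( (\phi_{t*}-I)/t:\hB_k^r\to\hB_k^{r+1} \) that your density argument requires: for a single \( k \)-element the norm \( \|(p;\alpha)\|_{B^r} \) is comparable to the mass \( \|\alpha\| \), so summing your estimate over the elements of a Dirac chain \( A=\sum_i(p_i;\alpha_i) \) controls the error only by \( o(t)\,\|A\|_{B^0} \), and the comparison between norms runs the wrong way, since \( \|A\|_{B^r}\le\|A\|_{B^0} \) and \( B^r \)-Cauchy sequences of Dirac chains generically have unbounded mass --- indeed \( P_v(p;\alpha) \) itself is the limit of \( \Delta_{tv}(p;\alpha/t) \), whose mass blows up like \( 1/t \). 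What is actually needed is an estimate relative to the \( B^r \) norm, i.e., a bound on \( (\phi_{t*}-I-tP_X) \) applied to every difference chain \( \Delta_{\sigma^j}(p;\alpha) \), \( 0\le j\le r \), occurring in the infimum of Definition \ref{def:norms}; concretely, quantitative commutator estimates between \( \phi_{t*} \) and the translations \( T_u \), the flow analogue of the exact inequality \( \|\Delta_uA\|_{B^{r+1}}\le\|u\|\,\|A\|_{B^r} \) that the norm is built to satisfy. This is precisely where the nonlinearity couples base-point motion to the action on \( \alpha \) and where the hypothesis \( X\in{\cal V}^{r+1} \) must actually be spent; it is the content of the supporting lemmas behind Theorem 8.4.4 of \cite{OC}. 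You flag this obstacle candidly in your final paragraph, but the proposal supplies no mechanism to overcome it, and the inference you assert --- from the per-element Taylor estimate to uniform operator boundedness on \( \hB_k^r \) --- fails as written.
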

     The proof of this may be found in  \cite{OC} (Theorem 8.4.4). 
		    
	\subsubsection{The chainlet complex} 
	\label{sub:a_topology_on_(_)}
	If \( \t = u_s \circ \cdots \circ u_1 \in S^s(\R^n) \), the \( s \)-th order symmetric power of the symmetric algebra, let \( P_\t:= P_{u_s} \circ \cdots \circ P_{u_1} \).
	\begin{defn}\label{differential chains}  	For \( U \) open in \( \R^n \), let \( \A_k^s(U) := \{\sum P_{\t_i} (p_i;  \a_i) \,|\, p_i \in U, \t_i  \in S^s(\R^n),  \mbox{ and } \a_i \in \L_k(\R^n) \} \).  Elements of \( \A_k^s(U) \) are called \emph{\textbf{Dirac $k$-chains of dipole order \( s \) in \( U \)}}.
	 Let \[ \ch_k^s(U) :=
	\overline{({\cal A}_k^s(U), \|\cdot\|_{B^{s+1}})}. \]   Elements of \( \ch_k^s(U) \) are called \emph{\textbf{$k$-chainlets of dipole order \( s \)}}.  For \( s \ge 1 \), the space  \( \ch_k^s(U) \) is a strict topological subspace of the Banach space \( \hB_k^{s+1}(U) \), while \( \ch_k^0(U) = \hB_k^1(U) \).
	\end{defn}  In this paper, we only need \( 0 \le s \le 1 \) and \( n-1 \le k \le n \), but include the entire differential chain complex and its chainlet subcomplex here for completion.  
	
	It is not hard to see that the primitive operators \( E_V, E_V^\dagger \) and \( P_V \), as well as the operators pushforward and multiplication by a function,  are continuous and closed on the direct sum \(\oplus_k \oplus_s \ch_k^s(U) \).   It follows that  boundary is continuous, and thus \( \ch_k^s(U) \) is a bigraded topological chain complex and  is a proper subcomplex of the differential chain complex \( \hB_k^{s+1}(U) \) with the induced topology.
	\[
		\xymatrix{\hB_n^{s+1} \ar[r]^\p &\hB_{n-1}^{s+2} \ar[r]^\p &\cdots \ar[r]^\p &\hB_1^{n+s} \ar[r]^\p &\hB_0^{n+s+1} \\ \ch_n^s \ar[r]^\p \ar[u] &\ch_{n-1}^{s+1} \ar[r]^\p \ar[u] &\cdots \ar[r]^\p  &\ch_1^{n+s-1} \ar[r]^\p \ar[u] &\ch_0^{n+s}\ar[u] \\  \A_n^s \ar[r]^\p \ar[u] &\A_{n-1}^{s+1}  \ar[u] \ar[r]^\p   &\cdots \ar[r]^\p  &\A_1^{n+s-1} \ar[r]^\p \ar[u] &\A_0^{n+s}\ar[u] }   
	\]
  In \S\ref{sub:surfaces} we work almost entirely in the chainlet complex.

	\section{Integral monopole and dipole chains}
 
\begin{defn}
  Suppose \( X \) is a Lipschitz vector field defined in a neighborhood of a smoothly embedded $(n-1)$-cell \( \t \subset U \subset \R^n \).  For \( p \in \t \), let \( \a(p) \) be the unit $(n-1)$-vector tangent to \( \t \) at \( p \).   Assume that   \( X(p) \wedge \a(p) \) has unit mass and is positively oriented.    We call \( E_X  \widetilde{\t} \in \ch_n^0(U) \) an 	\emph{\textbf{integral monopole $n$-cell}} and \( P_X  \widetilde{\t} \in \ch_{n-1}^1(U) \)  an	\emph{\textbf{	integral dipole $(n-1)$-cell }} in \( U \).    Finite sums \(  \sum_{i=1}^m E_{X_i} \widetilde{\t_i} \) and \(  \sum_{i=1}^m P_{X_i} \widetilde{\t_i} \) are   \emph{\textbf{integral monopole and dipole chains }} in \( U \), respectively, and form convex cones \( {\cal I}_n^0(U)  \subset \ch_n^0(U) \) and \( {\cal I}_{n-1}^1(U) \subset \ch_{n-1}^1(U) \), respectively.  	
\end{defn}

The completion \( \overline{{\cal I}_{n-1}^1(U)} \)  uses the \( B^2 \) norm, and \( \overline{{\cal I}_n^0(U)} \) uses the \( B^1 \)-norm. 

If \( S = \sum_i P_{X_i} \widetilde{\t_i} \in {\cal I}_{n-1}^1(U) \), let \( \overline{S} =  \sum_i E_{X_i} \widetilde{\t_i} \in {\cal I}_{n}^0(U)  \), the ``infinitesimal fill'' of the integral dipole \( (n-1) \)-chain \( S \).

\begin{prop}\label{prop:more}
If \( S \) is an integral dipole chain, then \( |S| = |\overline{S}| = |E_X S| = |P_X S| \). 
\end{prop}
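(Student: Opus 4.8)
The plan is to prove equality of the four supports by checking, locally at each point and separately for each operator, two opposite inclusions: that the operator never enlarges the support, and that — thanks to the nonvanishing hypothesis $\|X(p)\wedge\a(p)\| = 1$ built into the definition of an integral dipole chain — it never shrinks it. Recall from Definition \ref{def:support} that $p\notin|J|$ exactly when $\cint_J\omega = 0$ for every test form $\omega$ supported in some neighborhood of $p$, so every assertion below is local. I will show all four chains have support equal to $K := \bigcup_i\overline{\t_i}$, the union of the closed cells, reading $X$ as the defining field (agreeing with $X_i$ near $\t_i$) so that $E_X S$ and $P_X S$ are meaningful.

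For the ``no enlargement'' inclusions I would pass to the dual side. By \eqref{EXthm} and \eqref{eq:PXL}, $\cint_{E_X S}\omega = \cint_S i_X\omega$ and $\cint_{P_X S}\omega = \cint_S L_X\omega$, where $L_X = i_X d + d i_X$. Since $i_X$ is algebraic and $L_X$ is a first-order differential operator, $\supp(i_X\omega)$ and $\supp(L_X\omega)$ are contained in $\supp(\omega)$; hence a test form supported off $|S|$ pairs to zero, giving $|E_X S|,\,|P_X S|\subseteq|S|$. The same locality applied to each summand $E_{X_i}\widetilde{\t_i}$ and $P_{X_i}\widetilde{\t_i}$, together with $|\widetilde{\t_i}| = \overline{\t_i}$, yields $|S|,\,|\overline S|\subseteq K$.

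The reverse inclusions are the heart of the matter and are exactly where $\|X\wedge\a\| = 1$ enters. For the monopole I would first note $E_X\widetilde{\t} = E_{X^\perp}\widetilde{\t}$, where $X^\perp$ is the component of $X$ normal to the cell, since the tangential part satisfies $X^{\parallel}\wedge\a = 0$; the unit-mass condition forces $\|X^\perp\| = 1$. Applying the Clifford-type identity $E_{X^\perp}^\dagger E_{X^\perp} + E_{X^\perp}E_{X^\perp}^\dagger = \langle X^\perp, X^\perp\rangle I$ to the unit tangent $\a$, which is annihilated by $E_{X^\perp}^\dagger$, gives $E_{X^\perp}^\dagger E_{X^\perp}\widetilde{\t} = \widetilde{\t}$. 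Because retraction is local (its dual $X^\flat\wedge\cdot$ is support-non-increasing), $\overline{\t} = |\widetilde{\t}|\subseteq|E_X\widetilde{\t}|$, so $|\overline S| = K$. For the dipole I would invoke the flow formula of Theorem \ref{thm:F}, $\cint_{P_X\widetilde{\t}}\omega = \frac{d}{dt}\big|_{t=0}\int_{\phi_t(\t)}\omega$. Near an interior point $p$ of a cell the curve $t\mapsto\phi_t(p)$ crosses $\t$ transversally since $X^\perp(p)\ne 0$, so a form $\omega$ localized at $p$ can be chosen with nonzero transverse derivative, placing $p$ in $|P_X\widetilde{\t}|$; taking closures recovers $\overline{\t}$ and hence $|S| = K$.

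Finally, to reach $|E_X S| = |P_X S| = K$ I would reduce to the two cases already handled. Using $E_X E_X = 0$ (as $X\wedge X = 0$) one computes, cell by cell, $E_X S = E_X\p E_X\widetilde{\t} = P_X\overline S$, so $|E_X S| = |P_X\overline S|$, which the dipole argument evaluates on the monopole $\overline S$; a parallel manipulation handles $P_X S = P_X^2\widetilde{\t}$. The step I expect to be the main obstacle is controlling cancellation in these reverse inclusions: extrusion can collapse the support of a chain whose multivector already contains $X$ even when $X\ne 0$, and near points where several cells $\t_i$ meet the summands might conceivably cancel. I would address both locally — at an interior point of a single cell the other summands are absent and the transversality of $X$ guarantees the relevant multivector is not divisible by $X$, so no collapse occurs — and then pass to closures to assemble the full set $K$.
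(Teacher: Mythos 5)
Your proposal is correct in substance, but it takes a genuinely different route from the paper, whose entire proof consists of two bare assertions: (i) for a single cell, \( |E_X \widetilde{\t}| = |P_X \widetilde{\t}| = |\widetilde{\t}| = \t \), and (ii) no cancellation occurs in finite sums because all integral monopole cells are limits of \emph{positively oriented} Dirac chains. You supply actual arguments for (i) that the paper omits entirely: the dual-side locality of \( i_X \) and \( L_X \) for the non-enlargement inclusions; the identity \( E_{X^\perp}^\dagger E_{X^\perp} + E_{X^\perp} E_{X^\perp}^\dagger = \langle X^\perp, X^\perp \rangle I \), combined with \( E_{X^\perp}^\dagger \a = 0 \) and the support-non-increasing property of retraction, for the monopole reverse inclusion (this is where the unit-mass hypothesis \( \|X \wedge \a\| = 1 \) genuinely enters, via \( \|X^\perp\| = 1 \)); and the flow formula of Theorem \ref{thm:F} plus transversality of \( X \) for the dipole reverse inclusion. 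Your reduction \( E_X S = E_X \p E_X \widetilde{\t} = P_X \overline{S} \) via \( E_X E_X = 0 \) is a clean device, not in the paper, for handling \( |E_X S| \) and \( |P_X S| \) by the two cases already treated. All of this buys a level of rigor the paper's two-line proof does not attempt.

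The one point where your mechanism differs materially from the paper's, and where your justification is slightly weaker than it could be, is step (ii), which you yourself flag as the main obstacle. You rule out cancellation by locality: at a point interior to exactly one cell the other summands are invisible, and you recover \( K \) by density of such points and closedness of supports. This works whenever distinct cells meet only along lower-dimensional sets — the normalization the paper invokes elsewhere (the proof of Proposition \ref{cor:hausdQ} takes the \( \t_i \) non-overlapping) — but the definition of integral dipole chains as literally stated permits summands whose cells overlap on open patches, and there your ``other summands are absent'' premise fails. The paper's positivity observation is precisely what closes that case, and it meshes with your own transversality remark: the positive-orientation constraint on \( X_i(p) \wedge \a_i(p) \) forces every defining field to point to the same side of a shared cell, so on an overlap the effective local field is a sum of positively transverse fields, hence still transverse, and your single-cell test-form computation goes through verbatim with \( X \) replaced by that sum; for the monopole fills, pairing with \( f\,dV \), \( f \ge 0 \), shows positively oriented summands can only reinforce. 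Adding that one observation makes your argument cover the general case; as written, it covers the non-overlapping configurations actually used in the remainder of the paper.
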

 
\begin{proof} 
This follows since \( |E_X \widetilde{\t}| = |P_X \widetilde{\t}| = |\widetilde{\t}| = \t \) and there is no cancellation when forming finite sums since all integral monopole cells are limits of positively  oriented Dirac $3$-chains.
\end{proof}    
    
	 	 All of the surfaces in the figures provide examples of  integral monopole and dipole surfaces, as do all soap films observed in nature with the simple structure observed by Plateau, as well as all smooth images of disks,  embedded orientable surfaces, nonorientable surfaces, and surfaces with multiple junctions.   The boundary of an integral dipole surface is a well-defined dipole curve, and will also be integral in our constructions below.   
	  
	Lipid bilayers and physical soap films are naturally modeled by integral dipole surfaces because of the hydrophobic effect\footnote{  ``Natural bilayers are usually made mostly of phospholipids, which have a hydrophilic head and two hydrophobic tails. When phospholipids are exposed to water, they arrange themselves into a two-layered sheet (a bilayer) with all of their tails pointing toward the center of the sheet. The center of this bilayer contains almost no water and also excludes molecules like sugars or salts that dissolve in water but not in oil. This assembly process is similar to the coalescing of oil droplets in water and is driven by the same force, called the hydrophobic effect. Because lipid bilayers are quite fragile and are so thin that they are invisible in a traditional microscope, bilayers are very challenging to study'' 
	\url{http://en.wikipedia.org/wiki/Lipid_bilayer} }.

		\begin{itemize}
			\item The surface in Figure \ref{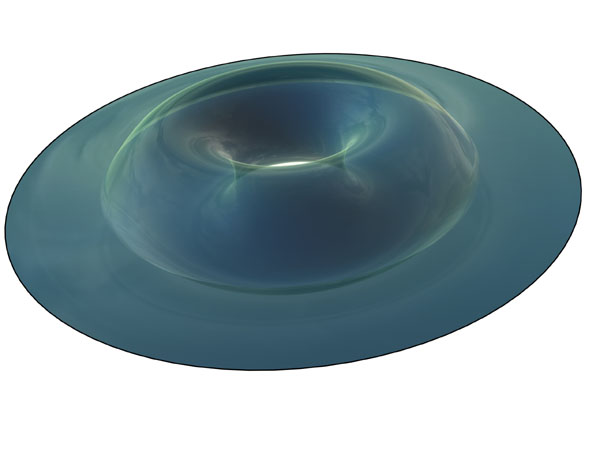} is a union of a torus and an annulus, and can be represented by an integral dipole surface using integral dipole representatives of each, and creating a triple junction along the curve in the dipole torus where it meets the dipole annulus.
			\item  The integral dipole representative of the M\"obius strip has positive orientation as seen in  the third drawing of Figure \ref{AllThree.jpg}.    Its boundary is supported in the bounding curve of the M\"obius strip.  
			\item The ``Y-problem'' geometric measure and integration theories have faced is the following:  Consider the chain  \(   \sum_{i=1}^3 \widetilde{\t_i} \) of three  oriented affine cells meeting only along a mutual edge \( L \) at 120 degrees as in Figure \ref{fig:YProblem}.  Then \( |\p \sum_{i=1}^3 \widetilde{\t_i}| \cap L = L \). This makes the boundary operator not very useful when dealing with triple junctions of cellular chains.  However, if we replace each \( \widetilde{\t_i} \) with \( P_{X_i} \widetilde{\t_i} \), then \( |\p \sum P_{X_i}\widetilde{\t_i}| \cap L  \) is just the union of the two endpoints of \( L \) as one would hope.  

		\item  By allowing non-orthogonal vector fields \( X \) in our definition of integral dipole cells, we may construct models for multiple junctions of a surface meeting in arbitrary angles.   

		\end{itemize} 

		\begin{figure}[htbp]
		 	\centering
		 		\includegraphics[height=4in]{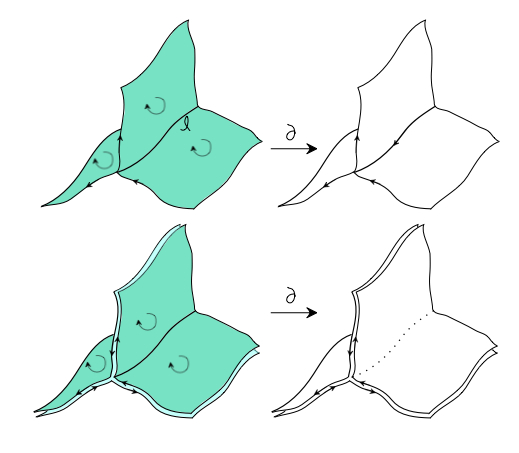}
		 	\caption{The Y-problem resolved}
		 	\label{fig:YProblem}
		 \end{figure}  

\begin{figure}[htbp]
	\centering
		\includegraphics[height=2in]{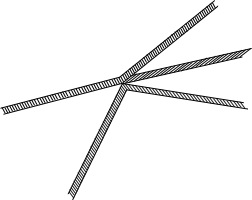}
	\caption{Multiple branches}
	\label{fig:branches}
\end{figure}
 
 \section{Geometric Poincar\'e Lemma} 
\label{sec:poincar'e_lemma}
  
\subsection{Cartesian wedge product} 
\label{sub:cartesian_product}  
 
Suppose \( U_1  \subset \R^n \) and \( U_2 \subset \R^m \) are open.  Let \( \iota_1:  U_1 \to U_1 \times U_2  \) and \( \iota_2: U_2 \to U_1 \times U_2   \) be the inclusions \(  \iota_1(p) = (p,0)  \) and \(  \iota_2(q) = (0,q) \).   Let \( \pi_1:U_1 \times U_2 \to U_1 \) and \( \pi_2:U_1 \times U_2 \to U_2 \)  be the projections \( \pi_i(p_1,p_2) = p_i \), \( i = 1, 2 \).
Let \( (p;\a) \in {\cal A}_k(U_1) \) and \( (q;\b) \in {\cal A}_\ell(U_2) \). Define \( \hat{\times}: {\cal A}_k(U_1)
\times {\cal A}_\ell(U_2) \to {\cal A}_{k+\ell}(U_1 \times U_2) \) by \[ \hat{\times}((p; \a), (q;
\b)) := ((p,q); \iota_{1*}\a \wedge \iota_{2_*}\b) \] where \( (p;\a) \) and \( (q; \b) \) are $k$- and $\ell$-elements, respectively, and extend bilinearly. We call \( P \hat{\times} Q := \hat{\times}(P,Q)\) the \emph{Cartesian wedge product} of \( P \) and \( Q \).

\begin{thm}\label{thm:GG} Cartesian wedge product \( \hat{\times}: \hB_k^r(U_1) \times
\hB_\ell^s(U_2) \to \hB_{k+\ell}^{r+s}(U_1 \times U_2) \) is associative,
bilinear and continuous for all open sets \( U_1 \subseteq \R^n, U_2 \subseteq \R^m \) and
satisfies
 \begin{enumerate} 
		\item \( \|J\hat{\times} K\|_{B^{r+s, U_1 \times U_2}} \le \|J\|_{B^{r,U_1}} \|K\|_{B^{s,U_2}} \);
		\item \( \|J \hat{\times} \widetilde{(a,b)}\|_{B^{r,U_1 \times \R}} \le  |b-a|\|J\|_{B^{r,U_1}} \) where \( \widetilde{(a,b)} \) is a \( 1 \)-chain representing the interval \( (a,b) \).
		\item \begin{align*} \p(J \hat{\times} K) = \begin{cases} ( \partial J) \hat{\times} K + (-1)^k J
\hat{\times} ( \partial K), &k > 0, \ell >0 \\ ( \partial J) \hat{\times} K, &k > 0, \ell = 0\\ J
\hat{\times} ( \partial K), &k = 0, \ell > 0 \end{cases} 
\end{align*} 
		\item \( J \hat{\times} K = 0
\) implies \( J = 0 \) or \( K = 0 \);  
		\item \( (p; \s \otimes \a) \hat{\times} (q; \t \otimes \b) = ((p,q); \s
\circ \t \otimes \iota_{1*}\a \wedge \iota_{2*}\b) \);
		\item  \( (\pi_{1*}\o \wedge \pi_{2*}\eta)(  J \hat{\times}   K) = \o(J)\eta(K) \)  for \( \o \in \B_k^r(U_1), \eta \in \B_\ell^s(U_2) \);
		\item \( |J \hat{\times} K| = |J| \times |K| \).
\end{enumerate}
\end{thm}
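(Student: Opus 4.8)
The plan is to verify every assertion first on the dense subspace of Dirac chains, where \( \hat{\times} \) is given by an explicit formula, and then to extend by continuity once the fundamental estimate (1) is in hand. Bilinearity is built into the definition, and associativity reduces on simple \( (k+\ell+m) \)-elements to associativity of the exterior product together with \( \iota_{1*},\iota_{2*} \) and of the symmetric product \( \circ \); it then propagates to the completions by continuity. Thus the entire theorem rests on (1), (5) and (6), from which (2), (3), (4) and (7) follow formally.

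For (1), the central observation is that the difference operators act one factor at a time and are \emph{isometric} across the factors. Because \( \iota_1,\iota_2 \) are isometries onto the two orthogonal summands of \( \R^n\times\R^m \), the simple \( (k+\ell) \)-vector \( \iota_{1*}\a\wedge\iota_{2*}\b \) has mass exactly \( \|\a\|\,\|\b\| \), and a translation \( T_{(u,0)} \) moves only the first coordinate, so that \( \D_{(u,0)}(J\hat{\times} K)=(\D_u J)\hat{\times} K \) and \( \D_{(0,v)}(J\hat{\times} K)=J\hat{\times}(\D_v K) \) on Dirac chains. I would start from representations \( J=\sum\D_{\s_{j_i}^j}(p_{j_i};\a_{j_i}) \) and \( K=\sum\D_{\t_{l_m}^l}(q_{l_m};\b_{l_m}) \) nearly realizing the infima of Definition \ref{def:norms}, expand \( J\hat{\times} K \) bilinearly, and observe that each term \( \D_{\s^j}(p;\a)\hat{\times}\D_{\t^l}(q;\b) \) is a \emph{single} difference chain of order \( j+l\le r+s \) in \( U_1\times U_2 \), with symmetric direction \( \s^j\circ\t^l \) (orthogonal translations commute) and \( B^{j+l} \)-weight \( \|\s^j\|\,\|\t^l\|\,\|\a\|\,\|\b\| \). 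One must also check admissibility: the convex hull of the support of such a product difference chain is contained in \( (\text{conv.\ hull in }U_1)\times(\text{conv.\ hull in }U_2)\subseteq U_1\times U_2 \), so each term is inside \( U_1\times U_2 \). Summing the weights and taking the infimum over representations yields \( \|J\hat{\times} K\|_{B^{r+s}}\le\|J\|_{B^r}\|K\|_{B^s} \), hence the continuous extension to the completions. I expect this bookkeeping --- recognizing a product of difference chains as a bona fide admissible difference chain of the summed order with the multiplicative weight --- to be the main obstacle.

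The algebraic identities are then direct. Property (6) follows by evaluating \( \pi_{1*}\o\wedge\pi_{2*}\eta \) on a simple element \( ((p,q);\iota_{1*}\a\wedge\iota_{2*}\b) \): since \( \pi_{1*}\o \) annihilates second-factor directions and \( \pi_{2*}\eta \) annihilates first-factor directions, the shuffle expansion of the wedge collapses to \( \o_p(\a)\,\eta_q(\b) \), and bilinearity together with the separate continuity of \( \cint \) (Theorem \ref{thm:isomorphism}) extends this to all \( J,K \). Property (5) follows by iterating the commutation \( P_{(u,0)}(J\hat{\times} K)=(P_u J)\hat{\times} K \) (immediate from the limit defining \( P_v \), then extended by continuity) together with \( P_u\circ P_v=P_v\circ P_u \), so that the symmetric-algebra factors combine into \( \s\circ\t \) on the product while the exterior factors wedge into \( \iota_{1*}\a\wedge\iota_{2*}\b \). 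Property (4) is immediate from (6): if \( J\hat{\times} K=0 \) then \( \o(J)\,\eta(K)=0 \) for all \( \o,\eta \), and the nondegeneracy of the integral pairing (Theorem \ref{thm:isomorphism}) forces \( J=0 \) or \( K=0 \).

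The three remaining parts are corollaries. Property (2) is the case \( s=0 \) of (1), using \( \|\widetilde{(a,b)}\|_{B^0}=|b-a| \) (the mass of the representing \( 1 \)-chain equals the length of the interval, by Theorem \ref{thm:B}). For the boundary rule (3), I would verify the classical product formula \( \p(\sigma\times\tau)=(\p\sigma)\times\tau+(-1)^{\dim\sigma}\sigma\times(\p\tau) \) on oriented affine cells, where \( \widetilde{\sigma}\hat{\times}\widetilde{\tau}=\widetilde{\sigma\times\tau} \); since polyhedral chains are dense and both \( \p \) (Theorem \ref{thm:D}) and \( \hat{\times} \) (part (1)) are continuous, the identity extends to all differential chains, the degenerate cases \( k=0 \) or \( \ell=0 \) arising because \( \p \) kills \( 0 \)-chains. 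Finally, for (7), the inclusion \( |J\hat{\times} K|\subseteq|J|\times|K| \) follows from the intrinsic nature of support together with the factorwise inclusions \( \hB_k^r(V_1)\hookrightarrow\hB_k^r(U_1) \): for every open \( V_1\supseteq|J| \) and \( V_2\supseteq|K| \) one has \( J\hat{\times} K\in\hB_{k+\ell}^{r+s}(V_1\times V_2) \), whence \( |J\hat{\times} K|\subseteq V_1\times V_2 \), and intersecting over all such \( V_1,V_2 \) gives containment in the closed set \( |J|\times|K| \); the reverse inclusion follows from (6) by testing against product forms \( \pi_{1*}\o\wedge\pi_{2*}\eta \) concentrated near a prescribed point of \( |J|\times|K| \), which pair nontrivially by nondegeneracy.
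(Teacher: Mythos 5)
Your overall strategy --- verify everything on Dirac chains, with the key bookkeeping step that a product of difference chains \( \D_{\s^j}(p;\a)\,\hat{\times}\,\D_{\t^l}(q;\b) \) is a single admissible difference chain of order \( j+l \) in \( U_1\times U_2 \), with symmetric direction \( \s^j\circ\t^l \) and weight \( \|\s\|\|\t\|\|\a\|\|\b\| \), then extend by continuity --- is sound, and it is in essence the argument the paper points to: note the paper itself contains no proof of this theorem, deferring entirely to \cite{OC} (Proposition 9.1.2 and Theorem 9.1.3). Your treatments of (1), (3), (4), (5) and (6) are correct modulo routine checks (the convex-hull admissibility observation and the collapse of the shuffle expansion in (6) are exactly the right points to isolate).

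However, your derivation of (2) has a genuine gap. You claim it is ``the case \( s=0 \) of (1), using \( \|\widetilde{(a,b)}\|_{B^0}=|b-a| \).'' But \( \widetilde{(a,b)} \) is not an element of \( \hB_1^0(\R) \): Theorem \ref{thm:B} only places it in \( \hB_1^1 \), and it cannot lie in the mass-norm completion, because the mass norm is blind to position --- any sequence of Dirac \( 1 \)-chains spreading mass along the interval (e.g.\ Riemann sums) fails to be Cauchy in mass norm, so \( \hB_1^0(\R) \) contains only countably supported summable Dirac chains, none of which integrates forms the way the interval does. Applying (1) with the available regularity \( s=1 \) gives only \( \|J\hat{\times}\widetilde{(a,b)}\|_{B^{r+1}}\le\|J\|_{B^r}\|\widetilde{(a,b)}\|_{B^1} \), which is strictly weaker than (2) in the norm index: the content of (2) is precisely that wedging with an interval costs no regularity. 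A correct proof needs a separate limiting argument: take Riemann sums \( D_m=\sum_{i=1}^m\bigl(t_i;\,(b-a)/m\bigr) \) with \( \|D_m\|_{B^0}=|b-a| \), run your order-\( (j,0) \) bookkeeping to get the uniform bound \( \|J\hat{\times}D_m\|_{B^{r}}\le|b-a|\,\|J\|_{B^r} \), note \( J\hat{\times}D_m\to J\hat{\times}\widetilde{(a,b)} \) in the \( B^{r+1} \) norm, and conclude by lower semicontinuity of \( \|\cdot\|_{B^r} \) along \( B^{r+1} \)-convergent sequences (via the Isomorphism Theorem \ref{thm:isomorphism} together with approximation of \( B^r \) forms by smoother forms in the \( B^r \) norm); none of this appears in your proposal. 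A smaller soft spot: in (7) your forward inclusion assumes that a chain supported in \( |J| \) can be realized as an element of \( \hB_k^r(V_1) \) for every open \( V_1\supset|J| \); this localization statement (that the continuous inclusion \( \hB_k^r(V_1)\hookrightarrow\hB_k^r(U_1) \) hits every chain supported inside \( V_1 \)) is plausible but not automatic, since the \( B^{r,V_1} \) and \( B^{r,U_1} \) norms differ through the ``inside'' condition on difference chains, and it should either be proved or cited.
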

  See \cite{OC} (Proposition 9.1.2 and  Theorem 9.1.3).

\subsection{Geometric Poincar\'e Lemma} 
\label{sub:poincare}
  We say that an open set \( U \subset \R^n \) is \emph{contractible} if there exists a map \( F\in M^r([0,1] \times U,U) \) and a point \( p_0 \in U \) with \( F(0,p) = p_0 \) and \( F(1,p) = p \) for all \( p \in U \). According to Theorems \ref{thm:A} and \ref{thm:GG} \( F_*:\hB_{k+1}^r((0,1) \times U)  \to \hB_{k+1}^r(U)   \) is continuous with 
\[ \|F_*(\widetilde{(0,1)} \hat{\times} A) \|_{B^r}   \le n2^r\max\{1, \rho_r(F) \}  \|A\|_{B^r}.  \] 

\begin{defn} \label{cone}   
 	Define the \emph{\textbf{cone operator}} \( \kappa: \hB_k^r(U)  \to \hB_{k+1}^r(U) \) by \( \kappa(J) := F_*(\widetilde{(0,1)} \hat{\times} J) \).   
\end{defn}

\begin{thm}\label{thm:H} \( \ch \)
  \( \kappa: \hB_k^r(U) \to \hB_{k+1}^r(U) \) is a continuous linear map satisfying \( \|\kappa J\|_{B^r} \le R \|J\|_{B^r} \) for all differential chains \( J \in   \hB_k^r(U)  \), and for all \( 0 \le k \le n-1 \) and \( r \ge 1 \).  Furthermore \( \kappa \p + \p \kappa = I \).     
\end{thm}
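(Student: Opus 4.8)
The plan is to realize $\kappa$ as a composite of two already-bounded operators, and then to obtain the homotopy identity $\kappa\p + \p\kappa = I$ by feeding the Leibniz rule for $\hat{\times}$ through the pushforward, reducing everything at the end to the endpoint behaviour of the contraction $F$.

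For the norm bound, I would use that $\kappa = F_* \circ (\widetilde{(0,1)} \hat{\times} -)$ factors as $J \mapsto \widetilde{(0,1)} \hat{\times} J \mapsto F_*(\widetilde{(0,1)} \hat{\times} J)$. By Theorem \ref{thm:GG} the first map carries $\hB_k^r(U)$ into $\hB_{k+1}^r((0,1)\times U)$ with $B^r$-norm controlled by the length of the interval, and by Theorem \ref{thm:A} the pushforward $F_*$ is bounded on $\hB_{k+1}^r$ by $n2^r\max\{1,\rho_r(F)\}$. This is exactly the estimate already displayed just before Definition \ref{cone}, so $R = n2^r\max\{1,\rho_r(F)\}$; linearity is immediate, and continuity follows.

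For the identity, since $F_* \p = \p F_*$ (Theorem \ref{thm:A}), I would apply $\p$ to get $\p\kappa J = F_*\,\p(\widetilde{(0,1)} \hat{\times} J)$. The Leibniz formula Theorem \ref{thm:GG}(3), applied with the $1$-chain $\widetilde{(0,1)}$ in the first slot, gives $\p(\widetilde{(0,1)} \hat{\times} J) = (\p\widetilde{(0,1)}) \hat{\times} J - \widetilde{(0,1)} \hat{\times} \p J$. Pushing forward and recognizing $F_*(\widetilde{(0,1)} \hat{\times} \p J) = \kappa\p J$ yields $\p\kappa J + \kappa\p J = F_*\bigl[(\p\widetilde{(0,1)}) \hat{\times} J\bigr]$. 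Because $\p$ restricts to the classical boundary on polyhedral chains (Theorems \ref{thm:D} and \ref{thm:B}), $\p\widetilde{(0,1)} = \widetilde{\{1\}} - \widetilde{\{0\}}$, so the right-hand side is $F_*(\widetilde{\{1\}} \hat{\times} J) - F_*(\widetilde{\{0\}} \hat{\times} J)$. The slice $\widetilde{\{1\}} \hat{\times} J$ is supported in $\{1\}\times U$ (Theorem \ref{thm:GG}(7)), and since $F(1,\cdot)$ is the identity, $DF_{(1,p)}$ restricts to the identity on the $U$-directions carrying its $k$-vectors, whence $F_*(\widetilde{\{1\}} \hat{\times} J) = J$; since $F(0,\cdot)\equiv p_0$ is constant, $DF_{(0,p)}$ annihilates those same directions, so $F_*(\widetilde{\{0\}} \hat{\times} J) = 0$ for $k\ge 1$. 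Combining gives $\p\kappa J + \kappa\p J = J$. I would verify both endpoint evaluations first on Dirac (equivalently polyhedral) $k$-chains, where they collapse to the classical cone-homotopy computation, and then extend to all of $\hB_k^r(U)$ by continuity of $\kappa,\p,F_*,\hat{\times}$ together with density of polyhedral chains (Theorem \ref{thm:B}).

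The main obstacle is the endpoint evaluation: justifying $F_*(\widetilde{\{0\}} \hat{\times} J) = 0$ from the degeneracy of $DF$ at $t=0$, and confirming the orientation and normalization so that the $t=1$ slice returns exactly $J$. I would note two caveats to handle carefully. First, the endpoints $0,1$ of $\widetilde{(0,1)}$ must lie in the domain, which is legitimate since $F \in M^r([0,1]\times U, U)$ is defined on the closed interval. Second, at $k=0$ the $t=0$ slice is \emph{not} degenerate and instead contributes the $p_0$-concentrated augmentation term, so that degree genuinely requires the basepoint correction; the clean identity $\kappa\p + \p\kappa = I$ is the one obtained for $k\ge 1$, and I would state the degree range accordingly.
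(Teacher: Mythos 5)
Your proof is correct and follows essentially the same route as the paper's: the norm bound comes from factoring \( \kappa = F_* \circ (\widetilde{(0,1)}\, \hat{\times}\, -) \) and combining Theorems \ref{thm:A} and \ref{thm:GG}, and the homotopy identity from \( \p F_* = F_* \p \), the Leibniz rule \ref{thm:GG}(3), and endpoint evaluation of \( F \), checked on Dirac chains and extended by continuity. Your \( k=0 \) caveat is in fact more careful than the paper's own proof, which asserts \( F_*((0;1) \hat{\times} (p;\a)) = 0 \) unconditionally: for a \( 0 \)-element the pushforward is \( (p_0;\a) \ne 0 \) since no derivative of \( F \) enters, so the clean identity \( \kappa\p + \p\kappa = I \) really holds only for \( k \ge 1 \) (with an augmentation correction at \( k=0 \)) --- a harmless restriction here, as every application in the paper (Corollaries \ref{thm:poincarelemma} and \ref{cycle}, Proposition \ref{lem:filledmore}) uses \( k \ge 1 \).
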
 

\begin{proof} 
We know \( \kappa \) is continuous since pushforward and Cartesian wedge product are continuous.    See Theorem \ref{thm:A} to establish the inequality.

Furthermore, by Theorem \ref{thm:GG}
\begin{align*}
 \kappa \p (p;\a) + \p \kappa (p;\a) &= F_* (\widetilde{(0,1)} \hat{\times} \p (p;\a)) + \p F_* (\widetilde{(0,1)} \hat{\times} (p;\a))\\&=  F_* (\widetilde{(0,1)} \hat{\times} \p (p;\a)) +   F_*\p (\widetilde{(0,1)} \hat{\times} (p;\a))\\&=  F_* (\widetilde{(0,1)} \hat{\times} \p (p;\a)) +   \left(F_*  (\p \widetilde{(0,1)} \hat{\times} (p;\a)) - F_* (  \widetilde{(0,1)} \hat{\times} \p (p;\a))\right)  \\&= 	 F_*  (\p \widetilde{(0,1)} \hat{\times} (p;\a))   \\&= F_*((1;1)  \hat{\times} (p;\a))  - F_*((0;1)  \hat{\times} (p;\a)).
\end{align*} 

Since \(  F(1,p) = p \) for all \( p\in U \), we deduce \( F_*((1;1)  \hat{\times} (p;\a)) = (p;\a) \), and since \( F(0,p) = p_0 \), we obtain \( F_*((0;1)  \hat{\times} (p;\a)) = 0 \).  It follows that  \( \kappa \p + \p \kappa \) is the identity on Dirac chains, and thus is the identity operator on \( \hB_k^r(U) \). 

\end{proof}

\begin{cor}\label{thm:poincarelemma}[Geometric Poincar\'e Lemma] 
Let \( U_1 \subset U_2 \subset \R^n \) be open subsets where \( U_1 \) is \( B^r \) contractible in \( U_2 \).  If \( J \in  \hB_k^r(U_1) \) with \( \p J = 0 \), then there exists \( C \in \hB_{k+1}^r(U_2) \) with \( \p C = J \) for all \( 1 \le k \le n \) and \( 1 \le r \le \i \).  
\end{cor}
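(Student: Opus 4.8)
The plan is to derive the Geometric Poincaré Lemma as an immediate consequence of the homotopy identity $\kappa \p + \p \kappa = I$ established in Theorem \ref{thm:H}, applied to the contraction $F$ witnessing that $U_1$ is $B^r$ contractible in $U_2$. The key observation is that the cone operator $\kappa$ built from such an $F$ lands in $\hB_{k+1}^r(U_2)$ even though the input chain $J$ lives in $\hB_k^r(U_1)$, because $F$ maps into $U_2$. So the chain $C$ we seek will simply be $\kappa(J)$.

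First I would fix the contraction map $F \in M^r([0,1] \times U_1, U_2)$ supplied by the hypothesis that $U_1$ is $B^r$ contractible in $U_2$, with $F(0,p) = p_0$ and $F(1,p) = p$ for all $p \in U_1$, and form the associated cone operator $\kappa(J) := F_*(\widetilde{(0,1)} \hat{\times} J)$ as in Definition \ref{cone}. By Theorem \ref{thm:A} together with the continuity of Cartesian wedge product (Theorem \ref{thm:GG}), this $\kappa$ is a continuous linear map $\hB_k^r(U_1) \to \hB_{k+1}^r(U_2)$. Next I would invoke the homotopy formula of Theorem \ref{thm:H}, namely $\kappa \p + \p \kappa = I$; the computation there used only the Cartesian wedge boundary rule of Theorem \ref{thm:GG}(3) and the endpoint conditions $F(1,p)=p$, $F(0,p)=p_0$, so it carries over verbatim to the present setting where target and source differ. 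Then, applying this identity to the given cycle $J$ and using $\p J = 0$, I obtain $J = \kappa(\p J) + \p(\kappa J) = \p(\kappa J)$. Setting $C := \kappa(J) \in \hB_{k+1}^r(U_2)$ gives $\p C = J$, which is the desired conclusion.

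The only genuine subtlety — and the step I expect to be the main obstacle in writing this carefully — is verifying that the homotopy identity $\kappa \p + \p \kappa = I$ remains valid when $\kappa$ is regarded as a map $\hB_k^r(U_1) \to \hB_{k+1}^r(U_2)$ rather than an endomorphism of chains over a single open set. In Theorem \ref{thm:H} the contraction is of $U$ to a point within $U$ itself, so one must check that enlarging the target to $U_2$ does not disturb the algebraic cancellations: the terms $F_*((1;1) \hat{\times} (p;\a)) = (p;\a)$ and $F_*((0;1) \hat{\times} (p;\a)) = 0$ depend only on the pointwise values $F(1,p) = p$ and $F(0,p) = p_0 \in U_1 \subset U_2$, both of which hold by hypothesis, so the identity descends correctly. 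One should also confirm that all the intermediate chains in the computation lie in the appropriate spaces over $U_1 \times (0,1)$ before pushforward and over $U_2$ afterward, which follows from the continuity statements in Theorems \ref{thm:A} and \ref{thm:GG}. Once this is in place, the result follows by density of Dirac chains and continuity of all operators involved, exactly as in the proof of Theorem \ref{thm:H}.
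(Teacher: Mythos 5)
Your proposal is correct and takes essentially the same route as the paper: the paper's proof likewise sets \( C = \kappa J \) and invokes the homotopy formula \( \kappa \p + \p \kappa = I \) of Theorem \ref{thm:H} to conclude \( J = \p \kappa J \). Your additional verification that the identity survives when \( \kappa \) is viewed as a map \( \hB_k^r(U_1) \to \hB_{k+1}^r(U_2) \) simply makes explicit what the paper leaves implicit.
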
     

 \begin{proof} 
  Let \( C = \kappa J \).   Note that \( C \) is unique up to addition of a  chain boundary since $J = \p (\kappa J + \p C).$  
 \end{proof}

The classical Poincar\'e Lemma for differential forms of class \( {\cal B} \) follows since the homotopy operator \( H \) given by \( H \o:= \o \kappa \) is continuous.  (One can also derive the classical formula  \( (H \o) (p;\a) =  \int_0^1 i_{\p/\p t}    \o(F_t(p);  \a)   dt \) directly from our definition of \( \kappa \), and we leave this for interested readers as an exercise.)

\begin{cor}
  \label{cycle}   There does not exist a nonzero differential $n$-cycle  \( J \in \hB_n^r(U) \) supported in a contractible open set \( U \) of a smooth $n$-manifold $M$ for all \( n > 0 \).  
\end{cor}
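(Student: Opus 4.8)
The plan is to feed the cycle $J$ into the Geometric Poincar\'e Lemma (Corollary \ref{thm:poincarelemma}) at the top degree $k = n$, and then exploit the fact that the target space for a primitive of a top-dimensional chain is trivial. Concretely, since $U$ is contractible we have a contraction $F \in M^r([0,1]\times U, U)$ and hence the cone operator $\kappa: \hB_n^r(U) \to \hB_{n+1}^r(U)$ of Definition \ref{cone}. Because $\p J = 0$, the homotopy identity $\kappa \p + \p \kappa = I$ of Theorem \ref{thm:H} collapses to $\p(\kappa J) = J$, so $C := \kappa J \in \hB_{n+1}^r(U)$ is a primitive of $J$.

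The key observation is purely dimensional. In a smooth $n$-manifold every tangent space is $n$-dimensional, so $\L_{n+1}(T_p M) = 0$ for all $p$; equivalently $\L_{n+1}(\R^n) = 0$. Hence there are no nonzero Dirac $(n+1)$-chains, the free space ${\cal A}_{n+1}(U)$ is trivial, and its completion $\hB_{n+1}^r(U) = 0$. Therefore $C = \kappa J$ must be the zero chain, and consequently $J = \p C = 0$, contradicting the hypothesis that $J$ is nonzero. This is the entire content of the argument: a top-degree cycle in a contractible region is a boundary, but the only chains one degree up vanish for dimensional reasons, so the cycle itself must vanish.

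I expect the one point genuinely requiring care to be the passage from the Euclidean formulation of the excerpt to a general smooth $n$-manifold $M$. The cone operator and the identity $\kappa\p + \p\kappa = I$ were stated for open $U \subset \R^n$; on $M$ one obtains them by the same construction, relying on the naturality of pushforward, namely $\p \circ F_* = F_* \circ \p$ from Theorem \ref{thm:A}, together with the boundary formula for the Cartesian wedge product in Theorem \ref{thm:GG}. Since contractibility of $U$ supplies the requisite map $F$ directly, these structural identities transfer essentially verbatim, and the dimension count $\L_{n+1}(T_p M) = 0$ is intrinsic and unaffected by any chart or embedding. Thus the only substantive step, the vanishing of $\hB_{n+1}^r(U)$, is robust, and no norm estimates beyond those already invoked are needed.
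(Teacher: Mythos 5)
Your proof is correct and is essentially the paper's own argument: both rest on the homotopy identity \( \kappa \p + \p \kappa = I \) of Theorem \ref{thm:H} together with the dimensional observation that \( \L_{n+1}(\R^n) = 0 \) forces \( \kappa J \in \hB_{n+1}^r(U) = 0 \), whence \( J = \p \kappa J + \kappa \p J = 0 \). The only cosmetic difference is that you phrase it as extracting a primitive \( C = \kappa J \) and then noting \( C \) lives in a trivial space, while the paper kills both terms of the identity at once.
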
 

\begin{proof}
  If $\partial J = 0$, then   $\kappa\partial J = 0$ since \( \kappa \) is a continuous operator.   However, $\kappa J = 0$ since every $(n+1)$-chain in $\R^n$ is degenerate. Thus   $$J =
  \partial \kappa J + \kappa
  \partial J = 0.$$
\end{proof} 
 
The next result is included for completion, and is not used in this paper:
\begin{cor} [General Intermediate Value Theorem]\label{cor:ivt} Suppose $F: \R^m \to \R^n$ is a smooth map where $1 \le n \le m$,  $J \in \hB_n^r(\R^m)$  and $K\in \hB_n^r(\R^n)$    with  $| F_*J| \cup |K|$ a compact subset of   a contractible open set $U$ of $\R^n$. 
	Then $$F_* (\partial J) = \partial K \iff F_*J = K.$$
\end{cor}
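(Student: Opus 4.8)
The plan is to reduce the equivalence to two facts already available: that pushforward commutes with the boundary operator, and that there are no nonzero $n$-cycles in a contractible open subset of $\R^n$. The single tool that drives both directions is Theorem \ref{thm:A}, which supplies \( \p \circ F_* = F_* \circ \p \); this lets me freely replace \( F_*(\p J) \) by \( \p (F_* J) \) throughout. The reverse implication then costs nothing: assuming \( F_* J = K \), I apply \( \p \) to both sides and use \( \p F_* = F_* \p \) to get \( F_*(\p J) = \p(F_* J) = \p K \).

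The forward implication is where the geometry enters. Assuming \( F_*(\p J) = \p K \), I would rewrite the left-hand side as \( \p(F_* J) \) via Theorem \ref{thm:A} and use linearity of \( \p \) to obtain \( \p(F_* J - K) = 0 \). Thus \( C := F_* J - K \) is a differential \( n \)-cycle. By subadditivity of support --- immediate from Definition \ref{def:support}, since any form with compact support disjoint from \( |F_* J| \cup |K| \) annihilates both \( F_* J \) and \( K \), hence annihilates \( C \) --- its support satisfies \( |C| \subseteq |F_* J| \cup |K| \subseteq U \). Since \( U \) is a contractible open subset of the smooth \( n \)-manifold \( \R^n \) and \( n > 0 \), Corollary \ref{cycle} forbids any nonzero \( n \)-cycle supported in \( U \). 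Therefore \( C = 0 \), i.e. \( F_* J = K \), which closes the equivalence.

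The main obstacle is not the logical skeleton, which is short, but verifying that \( F_* \) is a legitimate continuous operator on the relevant chain spaces so that Theorem \ref{thm:A} genuinely applies: a globally smooth \( F:\R^m \to \R^n \) need not lie in \( {\cal M}^r \) in the sense of Definition \ref{def:Mr}, since its directional derivatives need not be globally bounded. I would dispose of this by localizing --- replacing \( F \), on a neighborhood of the compact set where it matters, by a map in \( {\cal M}^r(\R^m,\R^n) \) agreeing with it there --- which leaves \( F_* J \), its boundary, and all supports unchanged because \( |F_* J| \cup |K| \) is compact. Once this is in place, the two remaining ingredients, subadditivity of support and the vanishing of cycles from Corollary \ref{cycle}, are immediate, so the argument should close with no further difficulty.
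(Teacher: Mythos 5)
Your argument is correct and is essentially the paper's own proof: the paper likewise deduces the corollary in one line from the commutation $\p \circ F_* = F_* \circ \p$ (Theorem \ref{thm:A}) together with Corollary \ref{cycle} applied to the cycle $F_*J - K$, whose support lies in the contractible set $U$. Your extra care---the subadditivity of support from Definition \ref{def:support} and the localization needed to place $F$ in ${\cal M}^r$ so that Theorem \ref{thm:A} genuinely applies---only fills in details the paper leaves implicit, and does not change the route.
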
 

	\begin{proof} This follows from Lemma \ref{cycle} and since the boundary and pushforward operators are continuous and commute.  
	\end{proof}

				\section{The part of a chain in a compatible cube}  
				\label{sub:the_part_of_a_chain_in_a_compatible_cube}

				Suppose \( J \in \hB_n^1(U)  \) and \( Q  \subset  U \) is an \( n \)-cube.  Differential chains have well behaved inclusions, but restrictions can be problematic.  A simple example is the dipole \( D= (1; e_1 \otimes 1) \).  It is not possible to define the part of \( D \) in \( (0,1) \subset \R^1 \) in some continuous way because   Dirac chains of order zero and limiting to \( D \) can be chosen supported entirely inside \( (0,1) \), outside \( (0,1) \), or partly inside and partly outside \( (0,1) \), and these parts can limit to many different chains, if at all.  However, we can define the ``part of \( J \)'' in an \( n \)-cube \( Q \) whose faces are not contained in a set \( Z_J \) with Lebesgue measure zero and consisting of a union of affine hyperplanes to be determined below.  

\begin{defn} 
	 	If \( A = \sum (p_i;   \a_i) \in \A_n^0(U) \) is a Dirac $n$-chain  in \( U \) and \( E \) is a subset of \( U \), define the \emph{\textbf{part of \( A \) in \( E  \)} } by \( A\lfloor_E := \sum (p_{i_j};   \a_{i_j}) \) where \( p_{i_j} \in E \). 
	
\end{defn}
   If \( A, A' \) are Dirac \( n \)-chains and \( c \in \R \), then 
\begin{equation} \label{star}
	 (cA)\lfloor_E = c (A\lfloor_E) \mbox{ and } (A+A')\lfloor_E =  A\lfloor_E + A'\lfloor_E .
\end{equation}   

\begin{defn}
	For   \( 0 \le \ell \le n \)  and  \( t \in \R \), let  \( H_t^\ell(U) = \{x \in U: x_{\ell} < t\} \)  be the open \emph{\textbf{half-space}} where \( x = \sum_{i=1}^n x_i e_i \).   
\end{defn}    
   In the next two lemmas, let \( Q = I_1 \times \cdots \times I_n \) be an open coordinate \( n \)-cube with side length \( T \).  
													\begin{lem}\label{lem:first}
											   Suppose $D \in  {\cal A}_n^0(Q)$ is a Dirac $n$-chain and \( 1 \le \ell \le n \). Then  $$ \int_a^b \|D \lfloor_{H_t^\ell}\|_{B^1}dt \le   (2T+1)  \|D\|_{B^1}.$$ 
													\end{lem}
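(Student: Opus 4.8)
The plan is to start from a near-optimal $B^1$ decomposition of $D$ and watch how the sweeping half-space $H_t^\ell$ cuts each piece as $t$ runs over the $\ell$-th edge $(a,b) = I_\ell$ of $Q$, which has length $T$. Fix $\e > 0$ and choose a representation
\[
D = \sum_k (p_k;\a_k) + \sum_m \D_{u_m}(q_m;\b_m),
\]
with every difference chain inside $Q$ and with $\sum_k \|\a_k\| + \sum_m \|u_m\|\|\b_m\| \le \|D\|_{B^1} + \e$, where the order-one pieces are $\D_{u_m}(q_m;\b_m) = (q_m + u_m;\b_m) - (q_m;\b_m)$.

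First I would record that the part operator $\lfloor_{H_t^\ell}$ is additive on Dirac chains by (\ref{star}), and that this additivity is unaffected by cancellation, since $\lfloor_{H_t^\ell}$ merely selects formal summands according to their base points. Thus, even after collecting the right-hand side back into the canonical form of $D$,
\[
D\lfloor_{H_t^\ell} = \sum_k (p_k;\a_k)\lfloor_{H_t^\ell} + \sum_m \D_{u_m}(q_m;\b_m)\lfloor_{H_t^\ell},
\]
so by the triangle inequality it suffices to integrate the $B^1$ norm of each summand on its own. An order-zero element $(p_k;\a_k)$ either survives (when $(p_k)_\ell < t$) or disappears, contributing at most $\|\a_k\|$ for each $t$, hence at most $T\|\a_k\|$ after integrating over $(a,b)$.

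The core of the estimate is the order-one term $\D_{u_m}(q_m;\b_m)$, whose two base points $q_m$ and $q_m + u_m$ have $\ell$-coordinates differing by $(u_m)_\ell$. For fixed $t$ there are three cases: the term survives intact, with $B^1$ norm $\le \|u_m\|\|\b_m\|$, when both base points lie in $H_t^\ell$; it vanishes when neither does; and it is ``split'' into a single order-zero element of $B^1$ norm $\le \|\b_m\|$ exactly when $t$ separates the two $\ell$-coordinates. The decisive point is that the split case occupies only a $t$-interval of length $|(u_m)_\ell| \le \|u_m\|$, whereas the intact case occupies a subinterval of $(a,b)$ of length at most $T$. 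Integrating term by term gives
\[
\int_a^b \|\D_{u_m}(q_m;\b_m)\lfloor_{H_t^\ell}\|_{B^1}\,dt \le T\|u_m\|\|\b_m\| + \|u_m\|\|\b_m\|,
\]
and summing the order-zero and order-one contributions yields $\int_a^b \|D\lfloor_{H_t^\ell}\|_{B^1}\,dt \le (T+1)(\|D\|_{B^1}+\e)$, which is within the claimed $(2T+1)\|D\|_{B^1}$ once $\e \to 0$.

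The hard part will be the bookkeeping for the split case: one must see that incurring the full order-zero cost $\|\b_m\|$ is harmless precisely because it is paid only on a $t$-interval whose length is controlled by $\|u_m\|$, so the total split cost is still dominated by the weight $\|u_m\|\|\b_m\|$ appearing in the $B^1$ decomposition. A minor point to verify is that each retained piece (a whole difference chain or a single base point) stays inside $Q$; this is immediate, since half-spaces are convex and the original difference chains were chosen inside $Q$.
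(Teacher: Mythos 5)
Your proof is correct, and it follows the paper's sweeping-half-space strategy: fix a near-optimal \( B^1 \) decomposition of \( D \), use linearity of \( \lfloor_{H_t^\ell} \) from \eqref{star}, and observe that each difference chain is, for a given \( t \), either kept whole (cost \( \le \|u\|\|\b\| \), over a \( t \)-interval of length \( \le T \)), split into a single Dirac element (cost \( \le \|\b\| \), but only over a \( t \)-interval of length \( \le |(u)_\ell| \le \|u\| \)), or annihilated. The one genuine difference is your treatment of arbitrary directions: the paper first proves the bound \eqref{intge} only for \( u \) parallel to \( e_\ell \), and then handles a general \( w \) by writing \( w = u + v \) with \( v \) orthogonal to \( e_\ell \) and applying the triangle inequality; the orthogonal part contributes an extra \( T\|v\|\|\b\| \le T\|w\|\|\b\| \), which is exactly where the constant \( 2T+1 \) comes from. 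You instead run the three-case analysis directly on \( \D_w(q;\b) \), noting that the split regime occupies a \( t \)-interval of length \( |(w)_\ell| \le \|w\| \) regardless of the direction of \( w \), and so obtain the sharper constant \( T+1 \), which implies the stated inequality since \( T+1 \le 2T+1 \). Your two auxiliary checks are also right and needed: the split-case bound \( \|(q;\b)\|_{B^1} \le \|\b\| \) holds because the \( B^r \) norms on Dirac chains decrease in \( r \) (so the \( B^1 \) norm is dominated by mass), and every retained piece remains inside \( Q \) because half-spaces are convex and the difference chains in the chosen decomposition were inside \( Q \). So your argument is a valid and slightly more efficient variant of the paper's proof; what the paper's reduction to the axis-parallel case buys is only a marginally simpler case analysis, at the price of a worse constant.
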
  
												 
											\begin{proof}    Assume without loss of generality that \( \ell = 1 \).    Write \( I_1 = (a,b) \).  By assumption \( b-a = T \).  
											Suppose \( \D_u(q;\b) = (q + u; \b) - (q; \b) \) where   \( q = \sum_{i=1}^n q_i e_i \in Q \),    \( u \in \R^n \) is a multiple of \( e_1 \) with \( q' = q+u \in Q \), and \(\b \in \L_n(\R^n) \).   In particular,  \( 0 < \|u\| < T \). We first prove 
											\begin{equation}\label{intge} \int_a^b \| \D_u (q; \b) \lfloor_{H_t^\ell}\|_{B^1}dt \le  (T+1) \|u\| \|\b\|. 
											\end{equation}   
											     The integral splits into three parts: 
											\begin{align}\label{intge2}
												 \int_a^b \|\D_u(q;\b) \lfloor_{H_t^\ell}\|_{B^1}dt  &=    \int_a^{q_1} \| \D_u(q;\b) \lfloor_{H_t^\ell}\|_{B^1}dt  \\&+  
												\int_{q_1}^{q_1'} \| \D_u(q;\b) \lfloor_{H_t^\ell}\|_{B^1}dt  \notag\\&+ 
												 \int_{q_1'}^b \| \D_u(q;\b) \lfloor_{H_t^\ell}\|_{B^1}dt. \notag 
											\end{align}         
										   The first integral is zero, the second is bounded by \( \|u\| \|\b\|  \), and the third is bounded by \( T \|\D_u ( \b)\|_{B^1} \le T  \|u\| \|\b\| \).  This gives us an upper bound of \( (T+1) \|u\|\|\b\| \) for the LHS of   \eqref{intge2}.     
										
										Now suppose \( w \in \R^n \).  Then \( w = u + v \) where \( u \) is in the direction of \( e_1 \) and \( v \) is orthogonal to \( e_1 \).  Since  \(  \int_a^b \| \D_v (q;  \b) \lfloor_{H_t^\ell}\|_{B^1}dt \le  T \|v\| \|\b\|  \), it follows from the triangle inequality that \(  \int_a^b \| \D_w (q;  \b) \lfloor_{H_t^\ell}\|_{B^1}dt \le   \int_a^b \| \D_u (q;  \b) \lfloor_{H_t^\ell}\|_{B^1}dt  +  \int_a^b \| \D_v (q;  \b) \lfloor_{H_t^\ell}\|_{B^1}dt \le  (2T+1) \|w\| \|\b\|   \).

												Likewise, 
												\begin{align}\label{bdd}
												  \int_a^b \| (p; \a) \lfloor_{H_t^\ell}\|_{B^1}dt \le  T  \|\a\| 
												\end{align} 
												  for all \( p\in Q \mbox{ and }  \a \in  \L_n (\R^n) \).    

											Now let \( D \in  {\cal A}_n^0(Q)  \) and \( \e > 0 \).  We write \( D =   \sum_{s=0}^j (p_s;  \a_s)  + \sum_{i=1}^{m}   \D_{w_{i}}  (q_{i};  \b_{i}) \) where each  \(  |\D_{w_{i}}   (q_{i};   \b_{i})| \) and \( |(p_s;  \a_s)| \subset Q \) and  \(  \|D\|_{B^1} \le  \sum_{s=0}^j   \|\a_s\| +    \sum_{i=1}^{m }   \|w_{i}\|  \|\b_{i}\|  \le \|D\|_{B^1} + \e \).    

 											  The bounds \eqref{intge2} and \eqref{bdd} yield 
											\begin{align*}
											\int_a^b \|D\lfloor_{H_t^\ell}\|_{B^1}dt &\le  \sum_{s=0}^j\int_a^b \|(p_s;  \a_s)\lfloor_{H_t^\ell} \|_{B^1} dt +    \sum_{i=1}^{m} \int_a^b  \|\D_{w_{i}}    (q_{i};   \b_{i})\lfloor_{H_t^\ell}\|_{B^1} dt \\&\le (2T+1)  \left( \sum_{s=0}^j  \|\a_s\| +   \sum_{i=1}^{m} \|w_{i}\| \|\b_{i}\|  \right) \\&<  (2T+1) (\|D\|_{B^1} + \e).   
											\end{align*}   The Lemma follows.

										  	\end{proof}   

											\begin{lem}\label{lem:beyond1}     
												If \( \{D_i\} \subset \A_n^0(Q) \) is a sequence of Dirac \( n \)-chains with  \( \sum_{i=1}^\i \|D_i\|_{B^1} < \i \), then   \( \sum_{i=1}^\i \|D_i\lfloor_{H_t^\ell} \|_{B^1} < \i \) for almost all \( t \in \R \) and each \( 0 \le \ell \le n \).
											\end{lem}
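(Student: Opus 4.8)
The plan is to deduce this almost directly from the integral estimate of Lemma \ref{lem:first} by interchanging the sum over \( i \) with the integral over \( t \), an interchange that is legitimate precisely because every integrand is nonnegative. First I would fix \( \ell \) with \( 1 \le \ell \le n \) and write \( I_\ell = (a,b) \), the projection of \( Q \) onto the \( \ell \)-th coordinate axis, so that \( b - a = T \). For a fixed Dirac \( n \)-chain \( D_i = \sum_k (p_k;\a_k) \), the part \( D_i\lfloor_{H_t^\ell} \) collects exactly those summands with \( (p_k)_\ell < t \); since \( D_i \) is a finite sum, the map \( t \mapsto \|D_i\lfloor_{H_t^\ell}\|_{B^1} \) is a step function, constant between the finitely many thresholds \( (p_k)_\ell \), hence measurable, so the integrals appearing in Lemma \ref{lem:first} are well defined.

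Next I would apply Lemma \ref{lem:first} to each \( D_i \) to obtain \( \int_a^b \|D_i\lfloor_{H_t^\ell}\|_{B^1}\,dt \le (2T+1)\|D_i\|_{B^1} \). Because the integrands are nonnegative and measurable, Tonelli's theorem (equivalently, the monotone convergence theorem applied to the partial sums) permits summing term by term:
\[
\int_a^b \sum_{i=1}^\i \|D_i\lfloor_{H_t^\ell}\|_{B^1}\,dt = \sum_{i=1}^\i \int_a^b \|D_i\lfloor_{H_t^\ell}\|_{B^1}\,dt \le (2T+1)\sum_{i=1}^\i \|D_i\|_{B^1} < \i.
\]
Thus the nonnegative measurable function \( t \mapsto \sum_{i=1}^\i \|D_i\lfloor_{H_t^\ell}\|_{B^1} \) is integrable on \( (a,b) \), and a nonnegative function with finite integral is finite for almost every \( t \in (a,b) \).

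It then remains to dispose of \( t \) outside \( [a,b] \), which is elementary: if \( t \le a \) then \( H_t^\ell \cap Q = \emptyset \), so \( D_i\lfloor_{H_t^\ell} = 0 \) for every \( i \) and the sum vanishes, while if \( t \ge b \) then \( Q \subset H_t^\ell \), so \( D_i\lfloor_{H_t^\ell} = D_i \) and the sum equals \( \sum_i \|D_i\|_{B^1} < \i \). In either case the sum is finite, so the exceptional null set is contained in \( (a,b) \). Since there are only finitely many values of \( \ell \), I would simply repeat the argument for each; any degenerate value of \( \ell \) for which \( H_t^\ell \) does not genuinely vary with \( t \) is trivially finite for all \( t \). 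The only delicate point in the whole argument is the justification of exchanging summation and integration, and this is exactly why the nonnegativity of the integrands is essential and why Tonelli's theorem, rather than a dominated-convergence argument, is the right tool; the geometric work of absorbing the restriction operator \( \lfloor_{H_t^\ell} \) into the \( B^1 \) norm has already been carried out in Lemma \ref{lem:first}.
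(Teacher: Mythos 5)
Your proposal is correct and follows essentially the same route as the paper: apply Lemma \ref{lem:first} term by term and interchange the sum with the integral over \( (a,b) \) via monotone convergence (Tonelli), concluding that the nonnegative integrable function \( t \mapsto \sum_i \|D_i\lfloor_{H_t^\ell}\|_{B^1} \) is finite almost everywhere. Your additional observations --- that each \( t \mapsto \|D_i\lfloor_{H_t^\ell}\|_{B^1} \) is a measurable step function, and that finiteness for \( t \notin (a,b) \) is trivial since the restriction is then \( 0 \) or all of \( D_i \) --- are correct refinements of details the paper leaves implicit.
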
     

											\begin{proof}
												Apply Lemma \ref{lem:first}  to deduce 
											\begin{align*}
											  \int_a^b  \left( \sum_{i=1}^N \|D_i\lfloor_{H_{t}^\ell}\|_{B^1} \right) dt = \sum_{i=1}^N \int_a^b  \left( \|D_i\lfloor_{H_{t}^\ell}\|_{B^1}dt \right)    \le   (2T+1)\sum_{i=1}^N   \|D_i\|_{B^1} \le   (2T+1)\sum_{i=1}^\i   \|D_i\|_{B^1} < \i.
											\end{align*} 
											  Let \( N \to \i \).     It follows  from the monotone convergence theorem that  \( \int_a^b  \left( \sum_{i=1}^\i \|D_i\lfloor_{H_{t}^\ell}\|_{B^1} \right) dt    < \i\) and  	thus \( \sum_{i=1}^\i \|D_i\lfloor_{H_t^\ell}\|_{B^1} \) converges a.e. \(a \le t \le b \).
											\end{proof}

								  Lemmas \ref{lem:first} and \ref{lem:beyond1}   hold for both open and closed \emph{half-spaces}  in  \( \R^n \). 
										
							\begin{defn}
								 An $n$-cube \( Q \) is \emph{partly open and closed} if it contains some of its \( (n-1) \)-faces, but possibly not all. If \( Z \) is a union of affine hyperplanes of \( \R^n \), we say that an $n$-cube  \( Q  \) is \( Z \)-\emph{compatible} if no face of \( Q \) is contained in \( Z \).      
							\end{defn}			   
								
									    Let  \( \O_R \) be a coordinate \( n \)-cube containing the origin of side length \( R \). 

										\begin{thm}\label{thm:JINU1}
										   For each  \( J \in \hB_n^1(\O_R)  \), there exists a union of affine hyperplanes   \( Z_J \subset \R^n \)  of Lebesgue measure zero set  such that if \( Q \subset \O_R \) is a \( Z_J \)-compatible partly open and closed coordinate $n$-cube, then there exists a unique differential \( n \)-chain \( J\lfloor_Q  \in \hB_n^1(\O_R) \) such that \( |J\lfloor_Q| \subset \overline{Q} \) and \( \cint_{J\lfloor_Q} \o = \cint_J \o \) for all \( \o \in \hB_n^1(\O_R) \) with support in \( Q \).     
										If \( J_i \to J \) in \( \hB_n^1(\O_R) \), then  \( J_i\lfloor_Q  \to J\lfloor_Q \) in \(  \hB_n^1(\O_R) \) for all \( Q \subset \O_R \) which are \( Z_J \cup_i Z_{J_i} \)-compatible.  Furthermore, \( (cJ)\lfloor_Q = c(J\lfloor_Q) \) for all \( c \in \R \) if \( Q \) is \( Z_J \)-compatible, and  \( (J+K)\lfloor_Q = J\lfloor_Q + K\lfloor_Q \) if \( Q \) is \( Z_J \cup Z_K \)-compatible.
										\end{thm}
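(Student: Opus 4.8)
The plan is to reduce everything to the level of Dirac $n$-chains, where the part operation $A\lfloor_E$ is defined and linear by \eqref{star}, and then to produce the exceptional set $Z_J$ so that the restricted Dirac approximants form a Cauchy sequence in $\hB_n^1(\O_R)$. Since $\A_n^0(\O_R)$ is dense in $\ch_n^0(\O_R)=\hB_n^1(\O_R)$, I would first choose Dirac $n$-chains $A_m\to J$ with $\sum_m\|A_m-A_{m-1}\|_{B^1}<\i$ (set $A_0=0$), so that $J=\sum_i D_i$ with $D_i=A_i-A_{i-1}$ and $\sum_i\|D_i\|_{B^1}<\i$. For each $i$ I fix, using Definition \ref{def:norms}, a near-optimal $B^1$-decomposition $D_i=\sum_s (p^i_s;\a^i_s)+\sum_j \D_{w^i_j}(q^i_j;\b^i_j)$ with $\sum_s\|\a^i_s\|+\sum_j\|w^i_j\|\|\b^i_j\|\le\|D_i\|_{B^1}+2^{-i}$. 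The whole theorem then rests on the single estimate $\sum_i\|D_i\lfloor_Q\|_{B^1}<\i$ for every compatible cube $Q$.

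The crux---and the step I expect to be the main obstacle---is that a naive coordinate-by-coordinate restriction is circular: the measure-zero set of bad thresholds for the $\ell$-th coordinate produced by Lemma \ref{lem:beyond1} depends on the restrictions already performed in the other coordinates, so it cannot be packaged into a fixed union of hyperplanes. I would break this circularity by a \emph{charging argument} carried out inside the fixed decompositions above. Writing $Q=\prod_{\ell}(a_\ell,b_\ell)$ and applying $\lfloor_Q$ termwise via \eqref{star}, each $\D_{w^i_j}(q^i_j;\b^i_j)\lfloor_Q$ is either $0$, the full difference chain (contributing at most $\|w^i_j\|\|\b^i_j\|$), or---when exactly one endpoint lies in $Q$---a single $k$-element of mass $\|\b^i_j\|$. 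Since $Q$ is convex, this last ``one in, one out'' case forces the segment $[q^i_j,q^i_j+w^i_j]$ to straddle one of the $2n$ face hyperplanes $\{x_\ell=a_\ell\}$ or $\{x_\ell=b_\ell\}$, and for that hyperplane the corresponding half-space restriction $\D_{w^i_j}\lfloor_{H^\ell_{t}}$ is itself a single element of mass $\|\b^i_j\|$. I would therefore charge each expensive term to a single coordinate, obtaining
\[ \|D_i\lfloor_Q\|_{B^1}\ \le\ 2\bigl(\|D_i\|_{B^1}+2^{-i}\bigr)+\sum_{\ell=1}^n\sum_{t\in\{a_\ell,b_\ell\}} h^i_\ell(t),\qquad h^i_\ell(t):=\sum_{j}\|\b^i_j\|\,\mathbf 1\!\left[t\ \text{between}\ (q^i_j)_\ell\ \text{and}\ (q^i_j+w^i_j)_\ell\right]. \]
The decisive point is that $h^i_\ell$ decouples the coordinates: $\int_{\R} h^i_\ell(t)\,dt=\sum_j\|\b^i_j\|\,|(w^i_j)_\ell|\le\|D_i\|_{B^1}+2^{-i}$. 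Summing over $i$ and invoking monotone convergence exactly as in Lemma \ref{lem:beyond1}, the set $N_\ell:=\{t:\sum_i h^i_\ell(t)=\i\}$ has measure zero, and I would define $Z_J:=\bigcup_{\ell=1}^n\{x:x_\ell\in N_\ell\}$. This $Z_J$ is a measure-zero union of coordinate hyperplanes depending only on $\{D_i\}$---not on $Q$---which is precisely what the product form of the compatibility condition requires.

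With $Z_J$ in hand, for any $Z_J$-compatible partly open and closed cube $Q$ every threshold $a_\ell,b_\ell$ avoids $N_\ell$, so the displayed bound gives $\sum_i\|D_i\lfloor_Q\|_{B^1}<\i$; hence $A_m\lfloor_Q=\sum_{i\le m}D_i\lfloor_Q$ is Cauchy and I define $J\lfloor_Q:=\lim_m A_m\lfloor_Q\in\hB_n^1(\O_R)$. Each $A_m\lfloor_Q$ is supported in $\overline Q$, so $|J\lfloor_Q|\subset\overline Q$ by the semicontinuity of support coming from Definition \ref{def:support}. For $\o\in\hB_n^1(\O_R)'\cong\B_n^1(\O_R)$ supported in $Q$, the endpoints discarded by $\lfloor_Q$ lie where $\o=0$, so $\cint_{A_m\lfloor_Q}\o=\cint_{A_m}\o$ on Dirac chains, and passing to the limit gives $\cint_{J\lfloor_Q}\o=\cint_J\o$. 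Uniqueness follows from Theorem \ref{thm:isomorphism}: a chain $K\in\hB_n^1(\O_R)$ with $|K|\subset\overline Q$ and $\cint_K\o=0$ for all $\o$ supported in $Q$ pairs to zero with every $\o\in\B_n^1(\O_R)$ after modifying $\o$ on the measure-zero neighborhood of $\partial Q$ (harmless because forms are bounded and the pairing is $B^1$-continuous), so $K=0$ by nondegeneracy.

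Finally, linearity in $J$ is inherited from \eqref{star}: approximating $J$ and $K$ and using $Z_{J+K}\subset Z_J\cup Z_K$ together with $Z_{cJ}=Z_J$ lets me pass the Dirac identities $(cA)\lfloor_Q=c(A\lfloor_Q)$ and $(A+A')\lfloor_Q=A\lfloor_Q+A'\lfloor_Q$ to the limit on any compatible $Q$. For the stated continuity, given $J_i\to J$ I would run the same construction on a single master family containing all the increments of the $J_i$ together with those of $J$, so that one exceptional set $Z_J\cup\bigcup_i Z_{J_i}$ controls every restriction at once; the uniform bound above then forces $J_i\lfloor_Q\to J\lfloor_Q$ on every cube compatible with this combined set. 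The only genuinely delicate points are the charging bookkeeping---ensuring each expensive term is charged to exactly one coordinate so that the coordinates truly decouple---and the boundary-face handling in the support and uniqueness claims; the remaining estimates are routine consequences of Lemmas \ref{lem:first} and \ref{lem:beyond1}.
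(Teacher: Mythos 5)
Your core construction is correct, and it departs from the paper's proof at the decisive step --- to your advantage. The paper produces, for each coordinate direction \( \ell \), a null set of bad thresholds by applying Lemma \ref{lem:beyond1} to the increments \( D_j - D_{j+1} \) restricted to half-spaces \( H^\ell_t \), sets \( Z_J = \cup_\ell Z_J(\ell) \), and then defines \( J\lfloor_Q \) as an iterated half-space restriction \( (\cdots(J\lfloor_{H^1_{t_1}})\lfloor_{H^2_{t_2}}\cdots)\lfloor_{H^n_{t_n}} \). The circularity you flag is a real defect of that route as written: \( Z_J(2) \) controls \( \sum_j\|(D_j-D_{j+1})\lfloor_{H^2_{t_2}}\|_{B^1} \), not \( \sum_j\|((D_j-D_{j+1})\lfloor_{H^1_{t_1}})\lfloor_{H^2_{t_2}}\|_{B^1} \), and since restriction can convert a difference chain of tiny \( B^1 \) norm into a Dirac chain whose \( B^1 \) norm is comparable to its mass, no estimate in the paper bridges the two; reapplying Lemma \ref{lem:beyond1} after the first restriction produces a null set depending on \( t_1 \), which cannot be packaged into a fixed union of hyperplanes. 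Your charging argument --- fix near-optimal \( B^1 \)-decompositions of the \( D_i \), observe that a difference term restricted to a coordinate cube survives whole, vanishes, or splits across one of the \( 2n \) face hyperplanes, and charge each split to a per-coordinate function \( h^i_\ell \) with \( \int_{\R} h^i_\ell(t)\,dt \le \|D_i\|_{B^1} + 2^{-i} \) --- genuinely decouples the coordinates and yields one fixed measure-zero \( Z_J \) valid for all compatible cubes simultaneously, with the restriction to \( Q \) defined in a single step rather than by iteration. This both repairs the gap and subsumes the paper's half-space case; the support, integral-compatibility, linearity, and continuity clauses then go through essentially as you say (and as in the paper).

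The genuine gap is your uniqueness step. Modifying \( \o \) on a small neighborhood of the frontier of \( Q \) is not harmless: the change is small in the sup norm but not in the \( B^1 \) norm (the Lipschitz constant of any cutoff blows up as the neighborhood shrinks), and elements of \( \hB_n^1 \) are not measures, so \( B^1 \)-continuity of the pairing gives no control --- indeed, the failure of exactly this kind of continuity is the reason \( Z_J \) must exist at all. Worse, the abstract uniqueness you assert is false as stated: if \( \t \) is a closed affine \( (n-1) \)-cell inside a face of \( Q \) that \( Q \) does \emph{not} contain, say in \( \{x_1 = b_1\} \), then the monopole \( K = E_{e_1}\widetilde{\t} \in \ch_n^0(\O_R) = \hB_n^1(\O_R) \) is nonzero, has \( |K| = \t \subset \overline{Q} \), and satisfies \( \cint_K \o = \cint_{\widetilde{\t}}\, i_{e_1}\o = 0 \) for every \( \o \) supported in \( Q \), because continuity forces such \( \o \) to vanish on the excluded face; hence \( J\lfloor_Q + K \) satisfies both defining conditions of the theorem. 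What can be proved --- and what the paper's own proof actually establishes under the word ``unique'' --- is well-definedness: independence of the approximating sequence, obtained by interleaving (given \( \{D_i\} \) and \( \{D_i'\} \) tending to \( J \), pass to subsequences with \( \sum_i \|D_i - D_i'\|_{B^1} < \i \) and include these differences in your charging scheme, so the two restricted sequences share a common limit on every compatible cube). Your master-family device for the continuity claim already supplies exactly this; you should route the uniqueness clause through it and delete the form-modification argument.
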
	                   

										\begin{proof} We first prove the result for half-spaces in the direction of \( e_\ell \) for each \( 1 \le \ell \le n \).   There exist \( D_j \to J \) in \( \hB_n^1(\O_R) \) with \( D_j \in {\cal A}_n^0(\O_R)  \) and \( \|D_j - D_{{j+1}}\|_{B^1} < 2^{-j} \).  Therefore, \[ \sum_{j=1}^\i  \|D_j - D_{{j+1}}\|_{B^1} < \i. \]    By Lemma \ref{lem:beyond1}, 
											\( \sum_{j=1}^\i  \|(D_j - D_{{j+1}})\lfloor_{H_t^\ell}\|_{B^1}  < \i \) for a.e.  \( t \).  Let \( Z_J(\ell) \) be the union of \( \fr(H_t^\ell) \) for all \( t \) for which \( \sum_{j=1}^\i  \|(D_j - D_{{j+1}})\lfloor_{H_t^\ell}\|_{B^1}   \) diverges.    If \( \fr(H_t^\ell)  \notin Z_J(\ell) \), then \(\{D_j\lfloor_{H_t^\ell}\}_j \) forms a Cauchy sequence in \( \hB_n^1(\O_R) \) and we may define 
											\[
												J\lfloor_{H_t^\ell} := \lim_{j \to \i} D_j\lfloor_{H_t^\ell} \in   \hB_n^1(\O_R).
											\]  If \( \{ D_i' \} \) is another sequence which has the above properties and tends to \( J \), then by taking subsequences, we may assume that \( \sum \| D_i - D_i'\|_{B^1} < \i \).  If the frontier of \( H_t^\ell \) is not contained in \( Z_J(\ell) \cup_i Z_{D_i}(\ell) \),  then \( D_i\lfloor_{H_t^\ell} \) and \( D_i'\lfloor_{H_t^\ell} \) tend to the same limit as \( i \to \i \).  The half-planes can be chosen to be either open or closed, because we are ignoring everything in their frontiers.

											 Let \( Z_J := \cup_{\ell=1}^n Z_J(\ell) \).  	
											
											Now suppose		\( Q \subset \O_R \) is a partly open and closed coordinate $n$-cube and \( Z_J \)-compatible.  Then \( Q = \cap_\ell H_{t_\ell}^\ell \) and   \(   \fr(H_{t_\ell}^\ell) \cap Z_J(\ell) = \emptyset  \) for \( 1 \le \ell \le n \).    Let \( J\lfloor_{Q} := ( \cdots (J\lfloor_{H_{t_1}^1})\lfloor_{H_{t_2}^2} \cdots)\lfloor_{H_{t_n}^n} \).
											
										 	 The integral condition holds, for if \( \o  \in  {\cal B}_n^1(\O_R) \) is supported in \( Q \), then  
											\( \cint_{D_i\lfloor_Q} \o = \cint_{D_i} \o \).   Therefore, \( \cint_{J \lfloor_Q} \o = \lim_{i \to \i} \cint_{D_i\lfloor_Q} \o = \lim_{i \to \i}  \cint_{D_i} \o  = \cint_J \o \).    
											
											We next show \( |J\lfloor_Q| \subset \overline{Q} \): 	 Suppose there exists \( p \in |J\lfloor_{Q}| \) and \( p \notin  \overline{Q} \).  Let \( Q' \) be a compatible \( n \)-cube containing \( p \) and   \( \overline{Q'} \cap  \overline{Q} = \emptyset \).  Since \( p \in |J\lfloor_{Q}| \) there exists a differential form \( \eta \) supported in \( Q' \) and with \(   \cint_{J\lfloor_{Q}} \eta \ne 0 \).    However,  \( \overline{Q'} \cap  \overline{Q} = \emptyset \) implies \( \cint_{J\lfloor_{Q}} \eta = \lim_{i \to \i} \cint_{D_i\lfloor_{Q}} \eta  = 0 \).

 									 	Linearity  \eqref{star} carries over to differential chains in almost every half-space since the integral is bilinear and continuous, and finally to differential chains in compatible \( n \)-cubes.   We deduce  \( (cJ)\lfloor_Q = c(J\lfloor_Q) \) for all \( c \in \R \) and  \( (J+K)\lfloor_Q = J\lfloor_Q + K\lfloor_Q \).
											\end{proof}

   Let \( {\cal H}^d \) denote \( d \)-dimensional Hausdorff measure.  

	\begin{prop}\label{cor:hausdQ}   If \( S \in {\cal I}_{n-1}^1(\O_R) \) is an integral dipole chain, then \( \cint_{\overline{S}} dV = {\cal H}^{n-1}(|S|) \) and \[ \cint_{\overline{S}\lfloor_Q} dV = {\cal H}^{n-1}(|S| \cap Q) \] for all \( Z_S \)-compatible \( n \)-cubes \( Q \).
	\end{prop}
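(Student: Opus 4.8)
The plan is to reduce both identities to a single integral monopole cell and then sum, exploiting the linearity of the integral and of the part operator. First I would prove the global identity $\cint_{\overline{S}} dV = {\cal H}^{n-1}(|S|)$. Writing $\overline{S} = \sum_i E_{X_i}\widetilde{\t_i}$, bilinearity of $\cint$ gives $\cint_{\overline{S}} dV = \sum_i \cint_{E_{X_i}\widetilde{\t_i}} dV$. For each summand the extrusion duality \eqref{EXthm} yields $\cint_{E_{X_i}\widetilde{\t_i}} dV = \cint_{\widetilde{\t_i}} i_{X_i} dV$, and since $X_i$ is Lipschitz, $i_{X_i} dV$ is a Lipschitz $(n-1)$-form, so Theorem \ref{thm:B} converts the differential-chain integral into the Riemann integral $\int_{\t_i} i_{X_i} dV$. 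At each $p \in \t_i$ the integrand evaluates on the unit tangent $(n-1)$-vector $\a_i(p)$ as $(i_{X_i} dV)(\a_i(p)) = dV(X_i(p)\wedge \a_i(p))$, which equals $+1$ precisely because the defining hypothesis on an integral dipole cell forces $X_i(p)\wedge \a_i(p)$ to have unit mass and positive orientation. Hence $\cint_{E_{X_i}\widetilde{\t_i}} dV = {\cal H}^{n-1}(\t_i)$.

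Summing over $i$ gives $\cint_{\overline{S}} dV = \sum_i {\cal H}^{n-1}(\t_i)$. By Proposition \ref{prop:more} and the positive orientation of every cell, $|S| = |\overline{S}| = \bigcup_i \t_i$ with no cancellation; and since the cells of an integral dipole chain meet only along their junction sets, which have dimension at most $n-2$ and are therefore ${\cal H}^{n-1}$-null, one has $\sum_i {\cal H}^{n-1}(\t_i) = {\cal H}^{n-1}\!\left(\bigcup_i \t_i\right) = {\cal H}^{n-1}(|S|)$, establishing the first identity.

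For the cube version I would prove the single-cell refinement $\cint_{(E_{X_i}\widetilde{\t_i})\lfloor_Q} dV = {\cal H}^{n-1}(\t_i \cap Q)$ for every $Z_S$-compatible $Q$, and then sum using $(J+K)\lfloor_Q = J\lfloor_Q + K\lfloor_Q$ from Theorem \ref{thm:JINU1}. To identify the part in $Q$, I would approximate $\widetilde{\t_i}$ by polyhedral $(n-1)$-chains, push them forward by $E_{X_i}$ to obtain Dirac $n$-chains whose base points lie on $\t_i$, and apply the part operator, which simply retains those base points lying in $Q$; as the mesh refines these base points concentrate on $\t_i \cap Q$. Because $Q$ is $Z_S$-compatible, Theorem \ref{thm:JINU1} guarantees that $\overline{S}\lfloor_Q$ exists as the sequence-independent limit, and the limiting cell is the integral monopole chain over $\t_i \cap Q$, whose $dV$-integral equals ${\cal H}^{n-1}(\t_i \cap Q)$ by the computation of the first paragraph. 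Summing and invoking the ${\cal H}^{n-1}$-null overlaps once more gives $\cint_{\overline{S}\lfloor_Q} dV = \sum_i {\cal H}^{n-1}(\t_i \cap Q) = {\cal H}^{n-1}(|S| \cap Q)$.

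The main obstacle is the identification of $\overline{S}\lfloor_Q$ with the cell restriction to $\t_i \cap Q$: the defining property of $\lfloor_Q$ in Theorem \ref{thm:JINU1} controls $\cint_{J\lfloor_Q}\o$ only for forms $\o$ supported in $Q$, whereas $dV$ has full support, so one cannot test directly against $dV$. The resolution is to work through the approximating Dirac chains, where the part operator acts combinatorially on base points and commutes with the $dV$-pairing, and to use $Z_S$-compatibility to prevent base points from accumulating on the faces of $Q$ (equivalently, to discard the ${\cal H}^{n-1}$-null slice $\t_i \cap \partial Q$ without affecting the area). A secondary point requiring care is the passage $\sum_i {\cal H}^{n-1}(\t_i) = {\cal H}^{n-1}\!\left(\bigcup_i \t_i\right)$, which rests on the essential disjointness of the cells of an integral dipole chain together with the no-cancellation statement of Proposition \ref{prop:more}.
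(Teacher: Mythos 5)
Your proposal follows essentially the same route as the paper's proof: decompose \( S \) into monopole cells \( E_{X_i}\widetilde{\t_i} \) over non-overlapping closed cells, use the linearity of \( \lfloor_Q \) from Theorem \ref{thm:JINU1}, the extrusion duality \eqref{EXthm}, Theorem \ref{thm:B} to pass to the Riemann integral of \( i_{X_i}dV \) over \( \t_i \cap Q \), the unit-mass/positive-orientation hypothesis to identify the integrand with the \( (n-1) \)-area form, and additivity of \( {\cal H}^{n-1} \) over essentially disjoint cells. Your extra care in identifying \( (E_{X_i}\widetilde{\t_i})\lfloor_Q \) via Dirac-chain approximants (noting that \( dV \) is not supported in \( Q \), so the integral characterization in Theorem \ref{thm:JINU1} cannot be applied directly) makes explicit a step the paper passes over silently, but it is the same argument, not a different one.
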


	\begin{proof}  We can write \( S = \sum_{i=1}^s P_{X_i} \widetilde{\t_i} \in {\cal I}_{n-1}^1(\O_R) \) where the \( (n-1) \)-cells \( \{\t_i\} \) are non-overlapping and closed.  Thus \( |S| = \cup_{i=1}^s \t_i \).   Furthermore, \( \overline{S}\lfloor_Q = \sum_{i=1}^s E_{X_i} \widetilde{\t_i} \lfloor_Q \).  Using the integral relation  \eqref{EXthm}, Theorem \ref{thm:B},  the assumption that  the component of \( X_i \) orthogonal to \( \t_i \) is unit,  the definition of Hausdorff measure for smoothly embedded cells,  additivity of Hausdorff measure for non-overlapping sets with smooth boundaries, and the last part of Theorem \ref{thm:JINU1}, we have
 \begin{align*}
  \cint_{\overline{S}\lfloor_Q} dV &= \sum_{i=1}^s \cint_{E_{X_i} \widetilde{\t_i} \lfloor_Q} dV = \sum_{i=1}^s \cint_{  \widetilde{\t_i} \lfloor_Q} i_{X_i} dV = \sum_{i=1}^s\int_{{\t_i \cap Q}} i_{X_i} dV  \\&=  \sum_{i=1}^s{\cal H}^{n-1}(\t_i \cap Q)  = {\cal H}^{n-1}((\cup_i \t_i) \cap Q)  = {\cal H}^{n-1}(|S| \cap  Q). 
 \end{align*} 
  The last integral is the Riemann integral.   	

The proof that \( \cint_{\overline{S}} dV = {\cal H}^{n-1}(|S|) \) is similar.
	\end{proof}

			\begin{defn} 
			We say \( J \in \hB_n^r(\O_R) \) is \emph{\textbf{positively oriented}} if \( \cint_{J \lfloor_Q} f dV \ge 0 \) for all \( Z_J \)-compatible \( n \)-cubes \( Q \subset \O_R \) with \( f \in \B_0^r(\O_R) \) and \( f \ge 0 \). 
			\end{defn} 
			
			For example, all integral monopole \( n \)-chains are positively oriented since they are limits of positively oriented Dirac \( n \)-chains. 

				\begin{lem}\label{fdV}
				If \( J \in \hB_n^r(\O_R) \) is positively oriented, then \( \|J\|_{B^r} = \cint_J dV \) for all \( r \ge 1 \).
				\end{lem}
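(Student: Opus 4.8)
The plan is to prove both inequalities at once through the duality furnished by the Isomorphism Theorem \ref{thm:isomorphism}. Since the \( B^r \) norm on forms is the dual norm, for \( J \in \hB_n^r(\O_R) \) one has
\[ \|J\|_{B^r} = \sup\left\{ \cint_J \o : \o \in \B_n^r(\O_R),\ \|\o\|_{B^r} \le 1 \right\}, \]
where the absolute value may be dropped because the admissible set is symmetric under \( \o \mapsto -\o \). The volume form \( dV \) has sup-comass \( 1 \) and vanishing derivatives, so \( \|dV\|_{B^r} = 1 \); hence \( dV \) is admissible and the supremum is at least \( \cint_J dV \). The whole lemma therefore reduces to the single inequality \( \cint_J \o \le \cint_J dV \) for every admissible \( \o \).

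Next I would exploit that we are at top degree: every \( \o \in \B_n^r(\O_R) \) is \( \o = g\, dV \) for a scalar function \( g \), and since \( dV \) has constant coefficients, \( g \in \B_0^r(\O_R) \) with \( \|g\|_{\sup} = \|\o\|_{\sup} \le \|\o\|_{B^r} \le 1 \). Thus \( g \le 1 \), so \( f := 1 - g \) is a nonnegative function in \( \B_0^r(\O_R) \). The desired inequality \( \cint_J \o \le \cint_J dV \) is exactly \( \cint_J f\, dV \ge 0 \). This is the conceptual heart of the argument: although \( J \) is only a \( B^r \)-limit of Dirac chains, and those approximants need not be positively oriented, positivity can be transferred to the test \emph{form} \( f \ge 0 \), where it is stable under the limit.

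To convert the defining condition (which concerns \( J\lfloor_Q \) for individual compatible cubes) into a statement about \( J \) itself, I would decompose \( \O_R \) into finitely many pairwise disjoint, partly open and closed, \( Z_J \)-compatible coordinate cubes \( Q_1, \dots, Q_N \) that cover \( \O_R \); such a grid exists because \( Z_J \) has measure zero, so in each coordinate direction the finitely many division points can be chosen off the corresponding bad set. Because the \( Q_i \) tile \( \O_R \) and every point lies in exactly one of them, any Dirac chain \( D \) satisfies \( D = \sum_i D\lfloor_{Q_i} \); passing to a Dirac approximating sequence \( D_j \to J \) and using the continuity and additivity of the restriction from Theorem \ref{thm:JINU1} gives \( J = \sum_{i=1}^N J\lfloor_{Q_i} \). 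Since \( J \) is positively oriented and \( f \ge 0 \), each term satisfies \( \cint_{J\lfloor_{Q_i}} f\, dV \ge 0 \), and summing yields \( \cint_J f\, dV = \sum_i \cint_{J\lfloor_{Q_i}} f\, dV \ge 0 \), as required. Combined with the first paragraph this gives \( \|J\|_{B^r} = \cint_J dV \).

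I expect the main obstacle to be the bookkeeping in the third step: producing a finite \( Z_J \)-compatible tiling and justifying the additive decomposition \( J = \sum_i J\lfloor_{Q_i} \) rigorously from Theorem \ref{thm:JINU1}. One must also confirm that the ``part of a chain in a cube'' construction, stated there for \( \hB_n^1 \), applies to the relevant \( J \), e.g. via the inclusion \( \hB_n^1(\O_R) \hookrightarrow \hB_n^r(\O_R) \); for a positively oriented chain, which behaves like a measure, this is where any regularity subtlety would live. The remaining steps, namely the dual-norm formula, the reduction to \( f = 1-g \ge 0 \), and the sign propagation, are then immediate.
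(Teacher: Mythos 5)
Your proposal is correct and takes essentially the same route as the paper: both identify \( \|J\|_{B^r} \) with the dual-norm supremum of \( \cint_J g\,dV \) over top-degree forms \( g\,dV \) of \( B^r \)-norm at most one, use \( |g| \le 1 \) pointwise, and invoke positivity of \( J \) against the nonnegative function \( 1-g \) to get \( \cint_J g\,dV \le \cint_J dV \), with the supremum attained at \( g \equiv 1 \) since \( \|dV\|_{B^r} = 1 \). The only divergence is one of rigor, in your favor: the paper tacitly applies positivity to \( J \) globally (``we may assume \( f \ge 0 \)''), whereas you derive \( \cint_J f\,dV \ge 0 \) from the cube-wise definition of positive orientation via a finite \( Z_J \)-compatible tiling and the decomposition \( J = \sum_i J\lfloor_{Q_i} \) from Theorem \ref{thm:JINU1}, and you correctly flag the \( \hB_n^1 \) versus \( \hB_n^r \) applicability of the restriction machinery, a gap the paper's own proof does not address.
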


				\begin{proof} 
 		 	We may assume that \( f \ge 0 \) since \( J \) is positively oriented and \(  \|J\|_{B^r} = \sup_{f \ne 0} \frac{\cint_J fdV}{\|f\|_{B^r}} \).  By definition of the \( B^r \) norm on functions, \( |f(x)| \le  \|f\|_{B^r} \) which implies  \( \frac{|\cint_J f dV|}{\|f\|_{B^r}} =  \cint_J \frac{f}{\|f\|_{B^r}} dV \le  \cint_J dV \).  Thus \( \|J\|_{B^r} = \sup \frac{|\cint_J f dV|}{\|f\|_{B^r}}  \le \cint_J dV   \).  On the other hand,  \( \cint_J dV \le \|J\|_{B^r} \).  
				\end{proof}

					 \section{The volume functional used to compute area} 
					 \label{ssub:area}

				 	\begin{defn}
						The \emph{\textbf{area}} of an integral dipole $(n-1)$-cell  \( P_X  \widetilde{\t} \)   is defined by  \[  A(P_X \widetilde{\t}) := \cint_{ E_X \widetilde{\t}} dV   \] where \( dV \) is the volume form in \( \R^n \).   Since  \( E_X  \widetilde{\t} \) is a limit of positively oriented Dirac \( 3 \)-chains, then \( A(P_X \widetilde{\t}) > 0 \) if \( \t \) is non-empty.   	If \( S = \sum_{i=1}^s   P_{X_i} \widetilde{\t_i} \) is an integral dipole \( (n-1) \)-chain, define  	
						\begin{equation}\label{A(S)}
								A(S) := \cint_{\overline{S}} dV.
							\end{equation} 
					 	\end{defn}

				In order to extend definition (\ref{A(S)}), we need a way to define \( \overline{S} \)  using our continuous operators so that we can take limits.  Proposition \ref{lem:filledmore} does just this:  

				\begin{prop}\label{lem:filledmore}
				If \( S = \sum_{i=1}^s P_{X_i} \widetilde{\t_i} \) is an integral dipole chain with \( \p S = P_Y \widetilde{\g} \), then \[ \overline{S} = \kappa(S) \] where \( \overline{S} = \sum_{i=1}^s E_{X_i} \widetilde{\t_i}  \). Moreover, \( \|\overline{S}\|_{B^1} \le \|S\|_{B^2} \).  
				\end{prop}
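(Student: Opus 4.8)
The plan is to establish the identity $\overline{S} = \kappa(S)$ by reducing everything to the single-cell case and exploiting the cone operator formula from Theorem \ref{thm:H}. Since $\kappa$, $P_{X_i}$, and $E_{X_i}$ are all linear and the decomposition $S = \sum_{i=1}^s P_{X_i} \widetilde{\t_i}$ is finite, it suffices to show $E_{X_i} \widetilde{\t_i} = \kappa(P_{X_i} \widetilde{\t_i})$ for each $i$, and then sum. So first I would fix a single integral dipole cell $P_X \widetilde{\t}$ and aim to prove $\kappa(P_X \widetilde{\t}) = E_X \widetilde{\t}$.

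The key identity to exploit is $\kappa \p + \p \kappa = I$ from Theorem \ref{thm:H}. Applying $\kappa$ to $P_X \widetilde{\t}$ and using the definition $P_X = E_X \p + \p E_X$ (Definition \ref{def:preder}), I would expand $\kappa(P_X \widetilde{\t}) = \kappa E_X \p \widetilde{\t} + \kappa \p E_X \widetilde{\t}$. The second term rewrites via $\kappa \p = I - \p \kappa$ as $E_X \widetilde{\t} - \p \kappa E_X \widetilde{\t}$, so that
\[
\kappa(P_X \widetilde{\t}) = E_X \widetilde{\t} + \kappa E_X \p \widetilde{\t} - \p \kappa E_X \widetilde{\t}.
\]
The goal then reduces to showing that the correction terms $\kappa E_X \p \widetilde{\t} - \p \kappa E_X \widetilde{\t}$ vanish. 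I expect this is where the geometry of the contraction enters: the cone operator $\kappa(J) = F_*(\widetilde{(0,1)} \hat{\times} J)$ is built from a contraction $F$ to a point, and for the filled chain $E_X \widetilde{\t}$, whose support equals $\t$ by Proposition \ref{prop:more}, coning and then taking boundary should reproduce the pushforward structure in a way that cancels against $\kappa E_X \p \widetilde{\t}$. An alternative and possibly cleaner route is to verify the claim directly on Dirac chains: write $\widetilde{\t} = \lim 2^{(n-1)i} Q_i$ using the renormalized $k$-cell approximation from \S\ref{sub:representations_of_open_sets}, compute $\kappa$ on these polyhedral pieces using the explicit geometric meaning of coning (a cone over an affine cell is again an affine cell), match it against $E_X$ applied to the same approximants, and pass to the limit using continuity of both $\kappa$ (Theorem \ref{thm:H}) and $E_X$ (Theorem \ref{thm:C}).

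For the norm bound $\|\overline{S}\|_{B^1} \le \|S\|_{B^2}$, I would argue separately and directly rather than through the identity. Since $\overline{S} = \sum E_{X_i}\widetilde{\t_i}$ is positively oriented (it is a limit of positively oriented Dirac $n$-chains, as noted after Proposition \ref{prop:more}), Lemma \ref{fdV} gives $\|\overline{S}\|_{B^1} = \cint_{\overline{S}} dV = A(S)$. The plan is then to relate this volume integral to the $B^2$-norm of $S$ via the defining relation $\cint_{E_X \widetilde{\t}} dV = \cint_{\widetilde{\t}} i_X dV$ from \eqref{EXthm}, bounding the interior product by the mass of $X \wedge \a$, which was normalized to unit mass in the definition of the integral dipole cell. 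Comparing against the $B^2$-norm of $S = P_X\widetilde{\t}$, which by Theorem \ref{thm:E} (or the direct estimate $\|P_v A\|_{B^{r+1}} \le \|v\|\,\|A\|_{B^r}$ preceding \S\ref{sub:boundary}) controls the dipole structure, should deliver the inequality.

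The main obstacle I anticipate is the cancellation of the correction terms $\kappa E_X \p \widetilde{\t} - \p \kappa E_X \widetilde{\t}$ in the identity-proof. These do not obviously vanish term-by-term, and making them cancel requires either the commutation relation $\p \circ F_* = F_* \circ \p$ from Theorem \ref{thm:A} together with careful bookkeeping of how $E_X$ interacts with the Cartesian wedge structure inside $\kappa$, or else the more concrete Dirac-chain computation where one must verify that coning a filled simplex and the extrusion of the original boundary cell agree geometrically before passing to limits. I suspect the Dirac-chain route, using the explicit affine-cone geometry and the density of polyhedral chains (from \S\ref{sub:representations_of_open_sets}), will be the least error-prone path to a clean proof.
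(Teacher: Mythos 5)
Your reduction to the single-cell identity \( \kappa(P_{X_i}\widetilde{\t_i}) = E_{X_i}\widetilde{\t_i} \) is the central gap: that identity is false for an individual cell, and your own expansion shows why. In \( \kappa(P_X\widetilde{\t}) = E_X\widetilde{\t} + \kappa E_X \p \widetilde{\t} - \p\kappa E_X\widetilde{\t} \), the last term vanishes cheaply (\( \kappa E_X\widetilde{\t} \) is an \((n+1)\)-chain in \( \R^n \), hence degenerate, the same observation used in the proof of Corollary \ref{cycle}), but the middle term \( \kappa E_X \p\widetilde{\t} \) is the cone over the extruded boundary of the single cell and is generically a nonzero \( n \)-chain. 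No per-cell affine-cone computation or Dirac-chain approximation will make it vanish, because the statement you reduced to is simply not true cell by cell; so both of your proposed routes are blocked at the start. The correction terms cancel only in the sum, and only via the hypothesis \( \p S = P_Y\widetilde{\g} \), which your proposal never invokes: the interior edges of the \( \t_i \) cancel and the surviving edge carries \( Y \), giving \( \sum_i E_{X_i}\p\widetilde{\t_i} = E_Y\widetilde{\g} \), and one still needs \( \kappa E_Y\widetilde{\g} = 0 \), which holds for the geometric reason that \( Y(p) = (q-p)/\|q-p\| \) is radial toward the cone point \( q \) defining the contraction \( F \), so the coned chain is degenerate. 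The paper's proof uses exactly these global facts in a slightly different order: it computes \( \p\overline{S} = S - E_Y\widetilde{\g} \), notes this chain is a cycle so that \( \kappa\p + \p\kappa = I \) yields \( \p\overline{S} = \p\kappa(S - E_Y\widetilde{\g}) \), kills the difference \( \overline{S} - \kappa(S - E_Y\widetilde{\g}) \), an \( n \)-cycle in a contractible set, by Corollary \ref{cycle}, and finally drops the term \( \kappa E_Y\widetilde{\g} = 0 \) to get \( \overline{S} = \kappa(S) \).

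Your plan for the norm bound is also incomplete, and for a related reason. Lemma \ref{fdV} does give \( \|\overline{S}\|_{B^1} = \cint_{\overline{S}} dV \), but comparing this with \( \|S\|_{B^2} \), which is an infimum over decompositions of a dipole chain, requires a duality pairing: since \( \cint_{P_X\widetilde{\t}}\o = \cint_{\widetilde{\t}} L_X\o \), you must exhibit a test form of \( B^2 \)-norm at most one whose Lie derivatives reproduce the areas, and bounding interior products by the unit-mass normalization does not produce such a form. The paper obtains the inequality precisely from the identity you hoped to bypass: \( \|\overline{S}\|_{B^1} = \cint_{\overline{S}} dV = \cint_{\kappa(S)} dV = \cint_S H\, dV \le \|S\|_{B^2}\|H dV\|_{B^2} \le \|S\|_{B^2} \), where \( H \) is the homotopy operator on forms dual to \( \kappa \). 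So the identity \( \overline{S} = \kappa(S) \) (equivalently, its dual operator \( H \)) is not optional for the second claim; it is the mechanism that converts the volume integral into a pairing against \( S \) itself.
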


				\begin{proof} By Theorem \ref{thm:H}  \( \kappa \p + \p \kappa = I \).  Since \( \p S = P_Y \widetilde{\g} \), it follows that \( \p \overline{S} = S - E_Y \widetilde{\g} = \p \k (S - E_Y \widetilde{\g}) \).   Using Corollary \ref{cycle}, we obtain \( \overline{S} = \k (S - E_Y\widetilde{\g}) = \k(S) \).

        Since \( \overline{S} \) is positively oriented we may apply Lemma \ref{fdV} to deduce 
				\begin{align*}
				 \|\overline{S}\|_{B^1} =  \cint_{\overline{S}} dV = \cint_{\k(S)} dV = \cint_S H dV \le \|S\|_{B^2} \|HdV\|_{B^2} 
				\le \|S\|_{B^2}.
				\end{align*} 

				\end{proof}

					\begin{thm}\label{prop:areas} If \( S \) is an integral dipole \( 2 \)-chain with \( \p S = P_Y \widetilde{\g} \), then
					\[ A(S)   = \cint_{\k(S)} dV. \]
					\end{thm}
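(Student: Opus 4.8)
The plan is to chain the definition of $A(S)$ together with Proposition \ref{lem:filledmore}, so that the theorem reduces to a substitution. By definition \eqref{A(S)}, for the integral dipole chain $S = \sum_{i=1}^s P_{X_i} \widetilde{\t_i}$ the area is $A(S) = \cint_{\overline{S}} dV$, where $\overline{S} = \sum_{i=1}^s E_{X_i} \widetilde{\t_i}$ is the infinitesimal fill. Thus it suffices to identify $\overline{S}$ with $\k(S)$, and the integral pairing will then do the rest.

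First I would check that the hypotheses of Proposition \ref{lem:filledmore} are met: $S$ is an integral dipole chain (here in the relevant case $n=3$, so an integral dipole $2$-chain), and its boundary is the integral dipole curve $\p S = P_Y \widetilde{\g}$, which is precisely the standing hypothesis of the present theorem. Under exactly these conditions Proposition \ref{lem:filledmore} concludes $\overline{S} = \k(S)$. Substituting this equality into the definition yields $A(S) = \cint_{\overline{S}} dV = \cint_{\k(S)} dV$, which is the claimed identity.

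There is no genuine obstacle to overcome here, because the substantive content has already been established upstream: Proposition \ref{lem:filledmore} shows that the cone operator $\k$ reproduces the infinitesimal fill $\overline{S}$ exactly when the boundary is an integral dipole curve, using the homotopy identity $\k \p + \p \k = I$ from Theorem \ref{thm:H} together with the nonexistence of top-dimensional cycles in contractible sets (Corollary \ref{cycle}). The role of this theorem is formulational rather than technical: it re-expresses $A(S)$ entirely through the continuous operator $\k$ and the continuous integral pairing, so that the right-hand side $\cint_{\k(S)} dV$ is defined for arbitrary differential chains and can be extended by continuity to limits of integral dipole chains. This continuous formula for area is what will make the later compactness and minimization arguments available.
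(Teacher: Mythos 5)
Your proposal is correct and matches the paper's proof, which likewise derives the identity immediately from Proposition \ref{lem:filledmore} by substituting $\overline{S} = \k(S)$ into the definition $A(S) = \cint_{\overline{S}} dV$. Your added remarks on the hypotheses and on why the continuous reformulation matters for the later compactness argument are accurate glosses on the same one-line argument.
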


					\begin{proof} 
					This follows from  Proposition \ref{lem:filledmore}.      
					\end{proof}

				For \( S \in \overline{{\cal I}_2^1(\O_R)} \), define \( A(S):=  \cint_{\k(S)} dV. \)  It follows from the continuity of \( \k \) (Theorem \ref{thm:H}) and joint continuity of the integral that \( A:\overline{{\cal I}_2^1(\O_R)} \to \R \) is continuous.

\section{Existence of area minimizers for surfaces spanning a smoothly embedded closed curve in $ \R^3 $} 
\label{sub:surfaces} 
\subsection{A complete space of spanning dipole surfaces} 
\label{ssub:surfaces}

  For the rest of the paper, we will assume \( n = 3 \) for visual simplicity.  However, all results extend immediately to codimension one surfaces with multiple junctions in \( \R^n \) for \( n \ge 2 \).   
We indicate how to extend to arbitrary codimension by the first remark in \eqref{remarksend}.

Recall the space of integral dipole \( 2 \)-chains \[ {\cal I}_2^1(\O_R)  \subset \overline{{\cal I}_2^1(\O_R)} \subset \ch_2^1(\O_R). \]

   Let  \( \g:S^1 \to \O_R \)  be  a smoothly embedded closed curve, and denote \( \g = \g(S^1) \).   By Sard's theorem a.e. \( q \in \O_R\backslash \g \) the vectors \(  q-p \) are transverse to \( \g \) for all \( p \in \g \).  Choose such a point \( q \).   (Theorem \ref{thm:indep} shows that our solution to Plateau's problem is independent of the choice of \( q \).)     
Let   \( \widetilde{\g} \in \ch_1^0(\O_R)  \) be the $1$-chain representing \( \g \) (see Theorem \ref{thm:B}).  Let \( Y(p) = Y_q(p) = (q-p)/\|q-p\| \) for all \( p \in \g \).  Since \( \g \) is smooth, we may use the Whitney Extension Theorem to extend \( Y \) to a smooth unit vector field on a neighborhood of \( \g \).  We are only interested in the restriction of \( Y \) to \( \g \) (where it is assumed to be transverse), and the fact that \( Y \) extends to a smooth vector field  in a neighborhood of \( \g \).  It follows that if \( Y_1 \) and \( Y_2 \) are any two such vector fields, then \( P_{Y_1}\widetilde{\g} = P_{Y_2}\widetilde{\g} \).

\begin{defn}\label{def:span}
 	We say that   \( S \in \overline{{\cal I}_2^1(\O_R)}  \)  \emph{\textbf{spans}} \( \g \) (with respect to \( q \)) if 

\begin{itemize}
	\item \( \p S = P_Y \widetilde{\g} \);
  \item If \( \g' \) is  a smoothly embedded closed curve linking \( \g \) with linking number one, then \( \g' \cap |S| \ne \emptyset \).
\end{itemize}
\end{defn}

The first condition assures us that the support of the boundary of \( S \) is \( \g \), the second  that  there are no holes in \( S \) as in Figure \ref{soap.jpg}.

\begin{prop}\label{thm:spanwell}
Span is a closed condition on \( \overline{{\cal I}_2^1(\O_R)} \).
\end{prop}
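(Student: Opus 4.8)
The plan is to show that both defining conditions of Definition \ref{def:span} persist under a convergent sequence \( S_i \to S \) in \( \overline{{\cal I}_2^1(\O_R)} \), where convergence is in the \( B^2 \) norm. The first condition is immediate: since \( \p:\hB_2^2(\O_R)\to\hB_1^3(\O_R) \) is continuous (Theorem \ref{thm:D}) and \( \p S_i = P_Y\widetilde{\g} \) for every \( i \), we get \( \p S = \lim_i \p S_i = P_Y\widetilde{\g} \). So the work is entirely in showing the \emph{blocking} condition is closed.

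For the second condition I would argue by contradiction, converting the purely geometric statement about supports into a statement about the positive measures associated to the infinitesimal fills. Suppose each \( S_i \) spans but \( S \) does not, so there is a smoothly embedded curve \( \g' \) with linking number one and \( \g'\cap|S|=\emptyset \). Since \( |S| \) is closed and \( \g' \) is compact, \( \mathrm{dist}(\g',|S|)=3\delta>0 \), and after shrinking \( \delta \) we may assume the open tube \( T := N_\delta(\g') \) is disjoint from both \( |S| \) and \( \g \). Define positive linear functionals \( \mu_i(f):=\cint_{\overline{S_i}} f\,dV \) and \( \mu(f):=\cint_{\overline{S}} f\,dV \) for \( f\in{\cal B}_0^2(\O_R) \); by Proposition \ref{cor:hausdQ} these are (restrictions of) the surface measures \( {\cal H}^2\lfloor_{|S_i|} \) and \( {\cal H}^2\lfloor_{|S|} \). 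Because \( \overline{S_i}=\kappa(S_i)\to\kappa(S)=\overline{S} \) in \( \hB_3^2(\O_R) \) (continuity of \( \kappa \), Theorem \ref{thm:H}) and the integral pairing is continuous (Theorem \ref{thm:isomorphism}), we have \( \mu_i\to\mu \) weakly with total masses \( \cint_{\overline{S_i}}dV\to\cint_{\overline{S}}dV \) converging. Moreover \( \mu(\overline{T})=0 \), since \( \overline{T} \) is disjoint from \( |S|=|\overline S| \) (Proposition \ref{prop:more} and Definition \ref{def:support}).

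The heart of the argument is a uniform lower bound \( \mu_i(T)\ge m_0>0 \) independent of \( i \). To get it I would exploit that \( S_i \) blocks not only \( \g' \) but an entire meridian family of push-offs: writing \( T\cong\g'\times D^2_\delta \), each curve \( \g'_y=\g'\times\{y\} \) for \( y\in D^2_\delta\setminus\{0\} \) is freely homotopic to \( \g' \) inside \( T\subset\R^3\setminus\g \), hence also has linking number one, hence meets \( |S_i| \). Thus the meridian projection \( \pi:T\to D^2_\delta \) maps \( |S_i|\cap T \) onto \( D^2_\delta\setminus\{0\} \); since \( \pi \) is Lipschitz with a constant \( L \) depending only on the fixed tube, the area formula yields \( {\cal H}^2(|S_i|\cap T)\ge L^{-2}\pi\delta^2=:m_0 \), i.e. \( \mu_i(T)\ge m_0 \). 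Combining this with the weak convergence \( \mu_i\to\mu \) and the convergence of total masses, the Portmanteau inequality for closed sets gives \( \mu(\overline{T})\ge\limsup_i\mu_i(\overline{T})\ge m_0>0 \), contradicting \( \mu(\overline{T})=0 \). Hence \( S \) blocks \( \g' \) after all, and span is closed.

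I expect the main obstacle to be exactly this uniform area lower bound, rather than the continuity bookkeeping. Two points need care: verifying that the push-offs \( \g'_y \) remain embedded, linking-number-one curves (which forces the tube to be chosen small relative to \( \mathrm{dist}(\g',\g) \) and to the curvature of \( \g' \)); and, more seriously, that the identification \( \mu_i(T)={\cal H}^2(|S_i|\cap T) \) of Proposition \ref{cor:hausdQ} — stated for genuine integral dipole chains in \( {\cal I}_2^1 \) — survives when \( S_i \) lies only in the closure \( \overline{{\cal I}_2^1} \), so that the blocked geometric intersections are actually charged by the positive measure \( \mu_i \). Resolving this identification is what makes the lower bound, and hence the whole proof, go through.
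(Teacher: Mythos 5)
Your proposal is correct, and in its geometric core it is the paper's own argument: the paper likewise argues by contradiction, takes a tubular neighborhood \( N_\d \) of the missing link \( \g' \), fills a solid torus inside it with links parallel to \( \g' \) (your push-offs \( \g'_y \)), notes that each such link must meet \( |S_i| \), and projects onto a transverse disk \( D \) (``Hausdorff measure can only decrease on projection'') to obtain the uniform lower bound \( {\cal H}^2(|S_i| \cap N_\d) \ge {\cal H}^2(D) > 0 \), converted into a statement about \( \cint_{\overline{S_i}\lfloor_{N_\d}} dV \) via Proposition \ref{cor:hausdQ} exactly as you do; the boundary condition, which the paper leaves implicit, you dispatch correctly by continuity of \( \p \). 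The one place you genuinely diverge is the passage to the limit: the paper chooses \( N_\d \) as a union of \( Z_S \)-compatible cubes \( Q_j \) and invokes the continuity of the restriction operator \( J \mapsto J\lfloor_{Q_j} \) (Theorem \ref{thm:JINU1}) to force \( \cint_{\overline{S_i}\lfloor_{N_\d}} dV \to \cint_{\overline{S}\lfloor_{N_\d}} dV = 0 \), whereas you upgrade \( \overline{S_i} = \kappa(S_i) \to \kappa(S) \) to weak-\(*\) convergence of the positive measures \( \mu_i \) and apply Portmanteau on the closed tube. Your route trades the compatible-cube machinery for a Riesz-representation/density step — legitimate, since smooth compactly supported functions lie in \( {\cal B}_0^2(\O_R) \) and are sup-norm dense, and the uniform mass bound \( \|\overline{S_i}\|_{B^1} \le \|S_i\|_{B^2} \) holds along a convergent sequence — and it quietly needs \( |\kappa(S)| \cap \overline{T} = \emptyset \), i.e. \( |\overline{S}| \subseteq |S| \) for the limit chain, which is precisely the same assertion the paper makes when it writes that \( N_\d \cap |S| = \emptyset \) implies \( \overline{S}\lfloor_{N_\d} = 0 \), so you are no worse off. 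Finally, the obstacle you flag at the end — that Proposition \ref{cor:hausdQ} identifies \( \mu_i \) with \( {\cal H}^2\lfloor_{|S_i|} \) only for genuine integral dipole chains — is handled in the paper simply by taking the approximating sequence \( S_i \in {\cal I}_2^1(\O_R) \) from the outset; the paper's proof thus has exactly the same scope as yours, and your argument matches it once you make the same choice of approximants.
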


				\begin{proof} 
							Suppose \( S_i \to S \) where \( S_i \in {\cal I}_2^1(\O_R) \) and \( |S_i| \) spans \( \g \).   Suppose \( |S| \) does not span \( \g \).   	Then there exists a simple link 	\( \g'\subset \O_{R} \)  of \( \g \) and \( |S|\cap \g' = \emptyset \).   Let \( N_\d \) be a tubular \( \d \)-neighborhood of \( \g' \)  with \( N_\d \cap |S|= \emptyset \) and which is a union of non-overlapping \( Z_S \)-compatible cubes \( N_\d = \cup_j Q_j \).  (Recall that the cubes \( Q_j \) can be partly open or closed.)  Then  \( N_\d \) contains a smoothly embedded solid torus \( N \) which is a union of links \( \ell \) parallel to \( \g' \).   	 Let \( D \) be a transverse section of \( N \) so that each \( \ell \) meets \( D \) once.  	Since \( |S_i| \) spans \( \g \), Hausdorff measure can only decrease on projection, and Proposition \ref{cor:hausdQ} we deduce 
							\begin{align*}
							 \cint_{\overline{S_i}\lfloor_{N_\d}} dV = \sum_j \cint_{\overline{S_i}\lfloor_{Q_j}} dV = \sum_j {\cal H}^2(|S_i| \cap Q_j) = {\cal H}^2(|S_i| \cap N_\d)  \ge {\cal H}^2(|S_i| \cap N) \ge  {\cal H}^2(D)  > 0.
							\end{align*} 
							However, \( N_\d \cap |S| = \emptyset \) implies \( \overline{S}\lfloor_{N_\d} = 0 \), contradicting Theorem \ref{thm:JINU1}.   It follows that \( S \) spans \( \g \).

				\end{proof}

We now have established  definitions of \emph{surface}, \emph{area} and \emph{span} for which we can solve problems of the calculus of variations such as Plateau's problem.     
    
Let \( M_0 \) be a solution to Plateau's problem in the category of integral currents \cite{federerfleming}.  According to \cite{flemingregular} \( M_0 \) is embedded and orientable. (We could also let \( M_0 \) be the integral dipole representative of the cone from \( q \) over \( \g \), even though it may not be embedded.)   Let \( c  = {\cal H}^2(M_0) \).   

\begin{defn}\label{CALC}  Let \( {\cal S}_2(\O_R, \g, q) := \{ S \in \overline{{\cal I}_2^1(\O_R)}:  A(S) \le c \mbox{ and } S \mbox{ spans } \g \}. \) 
 \end{defn}   

    Our \emph{candidate surfaces} for Plateau's problem\footnote{Added in proof:  L.C. Evans observed that our approach to solving Plateau's problem is a linear programming problem in \(  \overline{{\cal I}_2^1(\O_R)} \):  Minimize  \(  A(S) \le c \), subject to the constraints \( \p S = P_Y \widetilde{\g} \), \( S \) is positively oriented, and there are no simple links.}  in $3$-space are the supports of elements \( S \in {\cal S}_2(\O_R, \g, q) \). We know that \( {\cal S}_2(\O_R, \g, q) \) is not empty since it contains \( P_X \widetilde{M_0} \in {\cal I}_2^1(\O_R) \) where \( X \) is a smooth vector field whose component orthogonal to \( M_0 \) is unit and  which extends \( Y \).

\begin{thm}\label{thm:complete}   Suppose \( \g \) is a smoothly embedded closed curve embedded in \( \O_R -\{q\}  \).  Then 	\( {\cal S}_2(\O_R, \g, q) \) is a complete subset of \( \overline{{\cal I}_2^1(\O_R)}   \).
\end{thm}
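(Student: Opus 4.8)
The plan is to observe that $\overline{{\cal I}_2^1(\O_R)}$ is already a complete metric space: by definition it is the closure of the convex cone ${\cal I}_2^1(\O_R)$ inside the Banach space $\hB_2^2(\O_R)$, and a closed subset of a Banach space is complete in the induced metric. Consequently, to prove that ${\cal S}_2(\O_R,\g,q)$ is complete it suffices to show that it is a \emph{closed} subset of $\overline{{\cal I}_2^1(\O_R)}$, since a closed subset of a complete metric space is complete. I would therefore write ${\cal S}_2(\O_R,\g,q)$ as the intersection of two sets, one cut out by the area bound $A(S) \le c$ and one by the spanning condition, and show that each is closed.

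For the spanning condition, recall that Definition \ref{def:span} requires both $\p S = P_Y \widetilde{\g}$ and the no-holes linking condition. The first is closed because the boundary operator is continuous (Theorem \ref{thm:D}): if $S_i \to S$ in $\overline{{\cal I}_2^1(\O_R)}$ with each $\p S_i = P_Y \widetilde{\g}$, then $\p S = \lim_i \p S_i = P_Y \widetilde{\g}$. The second clause is exactly the content of Proposition \ref{thm:spanwell}, which asserts that span is a closed condition on $\overline{{\cal I}_2^1(\O_R)}$. Hence the set of $S$ that span $\g$ is closed.

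For the area bound, I would invoke the continuity of the area functional $A:\overline{{\cal I}_2^1(\O_R)} \to \R$ established immediately after Theorem \ref{prop:areas} (a consequence of the continuity of the cone operator $\k$ in Theorem \ref{thm:H} together with the joint continuity of the integral pairing of Theorem \ref{thm:isomorphism}). The set $\{S : A(S) \le c\}$ is then the preimage $A^{-1}((-\i, c])$ of a closed set under a continuous map, hence closed. Intersecting this closed set with the closed spanning set shows that ${\cal S}_2(\O_R,\g,q)$ is closed, and the theorem follows.

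The argument is essentially a packaging of earlier results, so there is no serious analytic obstacle; the only points requiring care are to confirm that \emph{both} clauses of the spanning condition are stable under $B^2$-limits, and that the ambient completion is genuinely complete for the metric in which the limits are taken. These are supplied, respectively, by Proposition \ref{thm:spanwell} together with the continuity of $\p$, by the continuity of $A$, and by the fact that $\overline{{\cal I}_2^1(\O_R)}$ is closed in the Banach space $\hB_2^2(\O_R)$.
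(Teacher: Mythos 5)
Your proposal is correct and follows essentially the same route as the paper, whose one-line proof cites exactly the same two ingredients: continuity of the area functional $A$ and the closedness of the spanning condition from Proposition \ref{thm:spanwell}, with completeness of $\overline{{\cal I}_2^1(\O_R)}$ as a closed subset of the Banach space $\hB_2^2(\O_R)$ left implicit. Your added observation that the clause $\p S = P_Y \widetilde{\g}$ is separately preserved under limits by continuity of the boundary operator (Theorem \ref{thm:D}) is a worthwhile explicit detail, since the proof of Proposition \ref{thm:spanwell} only treats the linking clause, but it does not change the argument.
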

 
\begin{proof}  This follows since area is continuous and span is closed in \( \overline{{\cal I}_2^1(\O_R)}   \) by Proposition \ref{thm:spanwell}. 

\end{proof}

  \begin{figure}[htbp]
  	\centering
  		\includegraphics[height=2in]{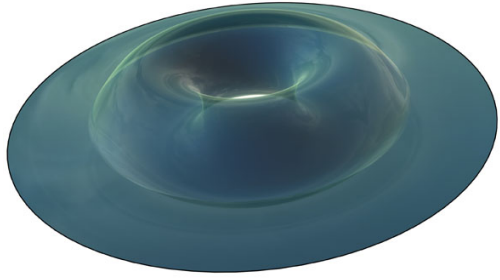}
  		\caption{A non-spanning surface of the unit circle (drawing by H. Pugh)}
  	\label{soap.jpg}
  \end{figure} 

 \subsection{A lower bound on area} 

\begin{prop}\label{pro:holes}  
Suppose \( \g \) is a smoothly embedded closed curve in \( \O_R -\{q\}  \). There exists a constant \( a_0 > 0 \) such that 	if   \( S \in {\cal S}_2(\g,\O_R, q)  \), then   \( A(S) \ge a_0 \).   
\end{prop}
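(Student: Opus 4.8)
The plan is to convert the topological spanning constraint into a single, fixed geometric obstruction that every admissible $S$ must carry, and then to read off a lower bound on its area. The essential difference from Proposition~\ref{thm:spanwell} is \emph{uniformity}: the obstruction (a solid torus linking \( \g \)) is chosen once and for all, independently of \( S \), so that the constant \( a_0 \) it produces is the same for every candidate.

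First I would fix the geometry. Choose one smoothly embedded round circle \( \g' \subset \O_R - \{q\} \) that links \( \g \) with linking number one and is disjoint from \( \g \), and take a standard solid-torus tubular neighborhood \( N \) of \( \g' \), thin enough that \( N \cap \g = \emptyset \) and that every core-parallel loop \( \ell \subset N \) still links \( \g \) once. Fixing a meridian \( 2 \)-disk \( D \subset N \) meeting each \( \ell \) exactly once, set \( a_0 := {\cal H}^2(D) > 0 \); shrinking \( N \) if needed, the projection \( \pi : N \to D \) along the loops \( \ell \) may be taken \( 1 \)-Lipschitz. None of \( \g', N, D, a_0 \) depends on \( S \). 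Then, for an arbitrary \( S \in {\cal S}_2(\O_R,\g,q) \), I would use the spanning condition at the level of the support: since each core-parallel loop \( \ell \subset N \) links \( \g \) once, Definition~\ref{def:span} forces \( \ell \cap |S| \ne \emptyset \), and as \( N \) is the union of these loops we get \( \pi(|S| \cap N) = D \). Because Hausdorff measure cannot increase under the \( 1 \)-Lipschitz map \( \pi \),
\[ {\cal H}^2(|S| \cap N) \ \ge\ {\cal H}^2\big(\pi(|S| \cap N)\big) \ =\ {\cal H}^2(D) \ =\ a_0, \]
which pins a definite amount of support-area of \( S \) inside the fixed torus.

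Finally I would pass from support-area to the functional \( A \). For \( S \in {\cal I}_2^1(\O_R) \), after replacing \( N \) by a union of \( Z_S \)-compatible cubes (possible since \( Z_S \) has measure zero, Theorem~\ref{thm:JINU1}), Proposition~\ref{cor:hausdQ} gives \( \cint_{\overline{S}\lfloor_N} dV = {\cal H}^2(|S| \cap N) \), while positivity of orientation of \( \overline{S} = \k(S) \) (Proposition~\ref{lem:filledmore}, Lemma~\ref{fdV}) yields \( A(S) = \cint_{\overline{S}} dV \ge \cint_{\overline{S}\lfloor_N} dV \); combining with the display gives \( A(S) \ge a_0 \). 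For a general \( S \in \overline{{\cal I}_2^1(\O_R)} \) I would approximate by \( S_i \in {\cal I}_2^1(\O_R) \) and use continuity of \( \k \), of the restriction \( \lfloor_N \), and of \( A \) to write \( \cint_{\k(S)\lfloor_N} dV = \lim_i {\cal H}^2(|S_i| \cap N) \); then, since every point of \( |S| \) is a limit of points of \( |S_i| \), lower semicontinuity of \( {\cal H}^2 \) along support convergence gives \( {\cal H}^2(|S| \cap N) \le \cint_{\k(S)\lfloor_N} dV \le A(S) \), so again \( A(S) \ge a_0 \).

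I expect the last step to be the main obstacle: relating the analytically defined area \( A(S) = \cint_{\k(S)} dV \) of a \emph{limit} chain to the genuinely geometric \( {\cal H}^2(|S| \cap N) \), since in the closure \( \overline{{\cal I}_2^1(\O_R)} \) the support can a priori carry less measure than the multiplicity-counting integral suggests. The clean resolution is the chain of inequalities \( A(S) \ge \cint_{\k(S)\lfloor_N} dV \ge {\cal H}^2(|S| \cap N) \) — the first from positivity of orientation, the second from lower semicontinuity of Hausdorff measure — and the crux is verifying that this semicontinuity holds in precisely the direction needed (using that \( |S| \) lies in the lower Kuratowski limit of the \( |S_i| \)); the purely topological lower bound \( {\cal H}^2(|S| \cap N) \ge a_0 \) itself requires nothing beyond the spanning hypothesis applied to \( S \).
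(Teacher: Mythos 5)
Your topological reduction is sound, and it is a genuine (slightly cleaner) variant of the paper's: the paper uses Sard's theorem to project \( \g \) to a plane, takes a component \( U \) of the complement of \( \pi(\g) \) adjacent to the unbounded region, and closes each fiber \( \pi^{-1}(x) \), \( x \in U \), through the boundary of \( \O_R \) to get a simple link; you instead fix one solid torus \( N \) fibered by parallel links with meridian disk \( D \). Either way one gets a fixed \( a_0 > 0 \) and the bound \( {\cal H}^2(|\cdot| \cap N) \ge a_0 \) for any chain whose support meets every fiber. (Your claim that the fiber projection \( \pi \) is \( 1 \)-Lipschitz is only true up to a factor \( 1 + O(\d) \) for a thin curved tube, but that just rescales \( a_0 \) and is harmless.)

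The genuine gap is in your last step, and it is fatal as written: the bridge \( A(S) \ge {\cal H}^2(|S| \cap N) \) for a \emph{limit} chain \( S \in \overline{{\cal I}_2^1(\O_R)} \) is false in general, and the ``lower semicontinuity of \( {\cal H}^2 \) along support convergence'' you invoke to prove it does not hold. Knowing \( |S| \subset \mathrm{Li}\, |S_i| \) (the lower Kuratowski limit) gives no lower-semicontinuity of Hausdorff measure: for instance, a \( B^2 \)-convergent series of dipole disks of total area \( \e \) whose centers are dense in a subcube \( C \subset N \) yields a positively oriented limit chain with \( A(S) \le \e \) but \( |S| \supset C \), so \( {\cal H}^2(|S| \cap N) = \infty \) while \( \cint_{\k(S)\lfloor_N} dV \le \e \). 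Proposition \ref{cor:hausdQ}, which identifies \( \cint_{\overline{S}\lfloor_Q} dV \) with \( {\cal H}^2(|S| \cap Q) \), is proved only for integral dipole chains and cannot be pushed to the closure; for limits, area and support measure decouple precisely in the direction you need. This is why the paper never touches \( {\cal H}^2(|S|) \) for the limit chain: it applies the projection argument to \emph{spanning integral dipole approximants} \( S_i \), where \( A(S_i) = {\cal H}^2(|S_i|) \ge a_0 \) by Proposition \ref{cor:hausdQ}, and then passes to the limit through the continuous functional \( A \), not through \( {\cal H}^2 \) of the limit support. The repair of your argument is the same rerouting: use your torus \( N \) on each \( S_i \), conclude \( \cint_{\overline{S_i}\lfloor_N} dV = {\cal H}^2(|S_i| \cap N) \ge a_0 \), and let \( i \to \i \). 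Note this requires knowing that every \( S \in {\cal S}_2(\O_R,\g,q) \) is a limit of \emph{spanning} chains \( S_i \in {\cal I}_2^1(\O_R) \) --- an assumption the paper makes implicitly in its first line (membership in \( \overline{{\cal I}_2^1(\O_R)} \) alone only provides approximants, not spanning ones); your instinct to apply the spanning hypothesis directly to \( S \) was presumably an attempt to avoid exactly this point, but the measure-theoretic bridge it requires is unavailable.
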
 

\begin{proof}  Let \( S = \lim_{i \to \i} S_i \) where \( S_i \in {\cal I}_2(\g, \O_R, q) \) spans \( \g \) and \( A(S_i) \le c \).   
  Using Sard's theorem, project \( \g \) onto an affine plane \( K \) so that the projected image  \( \pi(\g) \) has transverse self-intersections.  Let \( U \) be a component of   \( K \backslash \pi(\g) \) which meets the unbounded component of \( \pi(\g) \)  along an arc of \( \pi(\g) \).   Then the area \( a_0 \) of \( U \) is nonzero.  

It suffices to show that \( U \subset \pi(|S_i|) \) for each \( i \ge 1 \).  For then \( A(S_i) = {\cal H}^2(|S_i|) \ge a_0 \) for each \( i \), and hence \( A(S) \ge a_0 \).

   Choose \( x \in U \).   We wish to show \( \pi^{-1}(x) \cap |S_i| \ne \emptyset \).  By the definition of \( U \),  we obtain a simple link of \( \g \) by connecting the endpoints of \( \pi^{-1}(x) \cap \O_R \) with a geodesic arc in the boundary of \( \O_R \).  This link will meet \( |S_i| \) since \( S_i \) spans \( \g \). Since the link cannot  meet \( |S_i| \) in the boundary of \( \O_R \), then it must meet it in \( \pi^{-1}(x) \).  It follows that  \( x \in \pi(|S_i|) \).     

  \end{proof}  

\subsection{Compactness} 
 \begin{prop}\label{pro:tb}
Suppose \( \g \) is a smoothly embedded closed curve   embedded in \( \O_R -\{q\}  \).  Then	\( {\cal S}_2(\O_R, \g, q) \) is totally bounded in \(  \ch_2^1(\O_R) \).
\end{prop}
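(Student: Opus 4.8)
The plan is to push the problem from the dipole surfaces $S$ onto their infinitesimal fills $\overline{S} = \kappa(S)$, which are positively oriented $3$-chains of uniformly bounded volume supported in a fixed bounded set, and to establish total boundedness there by a uniform finite-dimensional approximation in the $B^1$ norm.

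First I would record two reductions. For $S \in {\cal S}_2(\O_R, \g, q)$ the span condition gives $\p S = P_Y \widetilde{\g}$, so Proposition \ref{lem:filledmore} yields $\overline{S} = \kappa(S)$ together with $S = \p \overline{S} + E_Y \widetilde{\g}$, where $E_Y \widetilde{\g}$ is a single fixed chain. Since $\p: \hB_3^1(\O_R) \to \hB_2^2(\O_R)$ is continuous (Theorem \ref{thm:D}), and both the continuous linear image of a totally bounded set and the translate of a totally bounded set are totally bounded, while $\ch_2^1(\O_R)$ carries the $B^2$-norm topology induced from $\hB_2^2(\O_R)$, it suffices to prove that the family of fills ${\cal F} := \{\overline{S} : S \in {\cal S}_2(\O_R, \g, q)\}$ is totally bounded in $\hB_3^1(\O_R)$. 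Each $\overline{S}$ is positively oriented (being a limit of integral monopole $3$-chains), so by Lemma \ref{fdV} we have $\|\overline{S}\|_{B^1} = \cint_{\overline{S}} dV = A(S) \le c$, and $|\overline{S}| = |S| \subset \O_R$.

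The heart of the argument is a uniform approximation of the fills by Dirac $3$-chains living in one fixed finite-dimensional space. Fix $\e > 0$. I would cover the compact set $\overline{\O_R}$ by finitely many open sets $U_j$ of diameter less than $\d$, pick $x_j \in U_j$, and choose a smooth partition of unity $\{\phi_j\}$ with $\supp \phi_j \subset U_j$, $0 \le \phi_j \le 1$, and $\sum_j \phi_j \equiv 1$ on $\O_R$. Setting $c_j := \cint_{\overline{S}} \phi_j \, dV \ge 0$ (positive orientation), define the Dirac $3$-chain $T_S := \sum_j c_j (x_j; e_1 \wedge e_2 \wedge e_3)$, so that $\sum_j c_j = \cint_{\overline{S}} dV = A(S) \le c$. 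For any Lipschitz $3$-form $\o = f\,dV$ with $\|\o\|_{B^1} \le 1$, the relation $\sum_j \phi_j \equiv 1$ gives
\[ \cint_{\overline{S} - T_S} \o = \cint_{\overline{S}} \sum_j \phi_j \bigl(f - f(x_j)\bigr)\, dV, \]
and since $|f(x) - f(x_j)| \le \d$ on $\supp \phi_j$ the integrand has sup-norm at most $\d$; the positivity estimate from the proof of Lemma \ref{fdV} (namely $|\cint_{\overline{S}} g\, dV| \le \|g\|_{\sup}\,\cint_{\overline{S}} dV$) then yields $\|\overline{S} - T_S\|_{B^1} \le \d c$, uniformly in $S$. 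The chains $T_S$ all lie in the fixed finite-dimensional space spanned by $\{(x_j; e_1 \wedge e_2 \wedge e_3)\}$, with coefficient vectors in the compact simplex $\{(c_j) : c_j \ge 0,\ \sum_j c_j \le c\}$, which is totally bounded; I would cover it by finitely many $B^1$-balls of radius $\e/2$. Choosing $\d$ with $\d c < \e/2$, every $\overline{S}$ lies within $\e$ of one of these finitely many centers, so ${\cal F}$ is totally bounded, and the reductions above finish the proof.

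The main obstacle is exactly this finite-dimensional approximation step: because the $B^1$ norm is dual to the Lipschitz $3$-forms, it is much stronger than weak-$*$ convergence of measures, so the uniform mass bound $A(S) \le c$ does not give total boundedness for free. The partition-of-unity construction is what makes the estimate uniform while sidestepping the $Z_{\overline{S}}$-compatibility complications of Theorem \ref{thm:JINU1} that a fixed cube grid would introduce, and the positivity estimate is what converts a uniform mass bound plus common compact support into genuine total boundedness in the $B^1$ norm.
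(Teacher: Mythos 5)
Your argument is correct in substance, but it takes a genuinely different route from the paper. The paper works directly on the dipole surfaces: it approximates each \( S \in {\cal S}_2(\O_R,\g,q) \) by a Dirac dipole \( 2 \)-chain \( \sum_i P_{u_i}(p_i;\a_i) \) with \( \|u_i\| \le k \) and \( \sum_i \|\a_i\| \le c \), then snaps the points to a binary lattice of mesh \( 2^{-k} \) and the vectors \( u_i, \a_i \) to rational coordinates, producing an explicit finite \( \e \)-net \( {\cal Z}(k) \) with error \( 4kc2^{-k} \) in the \( B^1 \) norm. You instead transfer the problem to the fills \( \overline{S} = \kappa(S) \), exploit that these are positively oriented top-dimensional chains of volume at most \( c \) on a compact set, and prove total boundedness of the fills in \( B^1 \) by a partition-of-unity mass-transport estimate, recovering \( {\cal S}_2 \) as a subset of \( \p{\cal F} + E_Y\widetilde{\g} \) via the continuity of \( \p: \hB_3^1(\O_R) \to \hB_2^2(\O_R) \). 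Your route isolates the real mechanism: for positively oriented \( n \)-chains the \( B^1 \) norm behaves like the bounded-Lipschitz (Kantorovich--Rubinstein) metric on positive measures, where bounded mass plus common compact support gives total boundedness; this also rigorously supplies the uniform mass control that the paper's proof asserts without detail when it claims the approximating Dirac dipole chains can be taken with \( \sum_i\|\a_i\| \le c \). What the paper's approach buys in exchange is self-containedness: it never leaves the dipole formalism and needs neither \( \kappa \), positivity, nor Lemma \ref{fdV}, and it yields concrete finite nets. One caveat you should make explicit: Proposition \ref{lem:filledmore} is stated only for integral dipole chains, so the identities \( S = \p\kappa(S) + E_Y\widetilde{\g} \) and the positive orientation of \( \kappa(S) \) for \( S \) in the completion \( \overline{{\cal I}_2^1(\O_R)} \) require approximating \( S \) by \emph{spanning} integral dipole chains (as the paper itself does in the proofs of Propositions \ref{thm:spanwell} and \ref{pro:holes}), or else you should run your net argument on that dense subset and conclude by noting that the closure of a totally bounded set is totally bounded; with either patch, your proof is complete at the same level of rigor as the paper's.
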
  
 \begin{proof}
If \( P_X \widetilde{\t} \in \ch_2^1(\O_R)  \), then it is  approximated by finite sums \( \sum_{i=1}^s P_{X(q_i)}(q_i; \b_i) \) in the \( B^2 \) norm.  
 
For \(  k \in \Z, \, k \ge 1 \), let \( {\cal Q}(k) \) be all rationals \( j/2^k \) with \( j \in \Z \) and   \( 0 \le |j| \le 2^k \).    Let   \( {\cal Z}(k)   \) be the subset of   integral dipole  $2$-chains  \( \sum_{i=1}^s P_{v_i}(q_i;  \b_i) \in \ch_2^1(\O_R) \) and such that  

\begin{itemize}
	\item \( q_i   \) is a vertex of the binary lattice   with edge length \( 2^{-k} \) and subdividing \( \O_R \); 
	\item \( v_i/k \in \R^3 \)   has coordinates in \( {\cal Q}(k) \) so that \( \|v_i\| \le k \);
	\item \( \b_i \) is a $2$-vector with coordinates in \( {\cal Q}(k) \) written in terms of an orthonormal basis of \( \L_2(\R^3) \);
	\item \( \sum_{i=1}^s   M(\b_i) \le   c \).  
 \end{itemize}
It follows that  \( {\cal Z}(k) \) contains only finitely many chains.  

Let \( S  \in {\cal S}_2(\O_R, \g, q) \).   Then \( S \mbox{ spans } \g, A(S) \le c, \mbox{ and } |S| \subset   \O_R  \).  It follows that  \( S \) can be approximated by an affine dipole $2$-chain \( S' \), which can itself be approximated by a Dirac dipole $2$-chain  \( A = \sum_{i=1}^{s} P_{u_i}(p_i;   \a_i)  \in {\cal A}_2^1(\O_R) \) 
with   \( \|u_i\| \le k \),   \( \sum_{i=1}^s \|\a_i\|   \le c \).  All approximations  are in \( \ch_2^1(\O_R)  \).   

  Now \( A \) can be approximated by an element of \( {\cal Z}(k) \)  as follows:   For each dipole \( 2 \)-vector  \( P_{u_i}(p_i;    \a_i) \) we know   \( p_i \) lies in some cube \( Q' \) of the binary lattice subdividing \( \O_R \). Let \( p_i' \) be a vertex of \( Q' \).  Let \( u_i' \)   have coordinates in \( {\cal Q}(k) \) s.t.  \( \|u_i - u_i'\| \le 2^{-k}  \).  Let \( \a_i' \) be a $2$-vector with coordinates in \( {\cal Q}(k) \) s.t. \( \|\a_i- \a_i'\| < 2^{-k} \|\a_i\|) \).
   Then \( A' = \sum_{i=1}^{s} P_{u_i'}(p_i';   \a_i') \in {\cal Z}(k)  \).  It follows that 
\begin{align*}
  \| A - A'\|_{B^1} &\le  \|\Sigma_i P_{u_i}(p_i;   \a_i) - P_{u_i'}(p_i;   \a_i)\|_{B^1} +  \|\Sigma_i P_{u_i'}(p_i;   \a_i) - P_{u_i'}(p_i';   \a_i)\|_{B^1} \\& \hspace{1in}+ \|\Sigma_i P_{u_i'}(p_i';   \a_i) - P_{u_i'}(p_i';   \a_i')\|_{B^1} \\&\le   
 \sum_{i=1}^s \| P_{u_i - u_i'}(p_i;   \a_i) \|_{B^1}  +  
 \sum_{i=1}^s \| P_{u_i'}((p_i;   \a_i) - (p_i';   \a_i))\|_{B^1} + 
 \sum_{i=1}^s \| P_{u_i'}(p_i';   \a_i - \a_i') \|_{B^1}
\\&\le 
\sum_{i=1}^s \|u_i -u_i'\| \|\a_i\|  + \|u_i'\|\|p_i -p_i'\| \|\a_i\| + \|u_i'\| \|\a_i -\a_i'\|
\\&\le
k\sum_{i=1}^s 2^{-k}\|\a_i\| + (1 + 2^{-k}) 2^{-k}\|\a_i\|  + (1 + 2^{-k})  2^{-k} \|\a_i\| 
\\&\le k2^{-k} \sum_{i=1}^s \|\a_i\| + 2(1 + 2^{-k})  \|\a_i\|   
\\&\le 4k 2^{-k} \sum_{i=1}^s \|\a_i\| \\&\le 4kc 2^{-k}. 
\end{align*}
 
  This proves that \(  {\cal S}_2(\O_R, \g, q)  \) is totally bounded.
\end{proof}    
 
\begin{thm}\label{pro:com}
 Suppose \( \g \) is a smoothly embedded closed curve   embedded in \( \O_R - \{q\}  \).  Then   \( {\cal S}_2(\O_R, \g, q)  \) is compact. 
\end{thm}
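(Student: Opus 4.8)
The plan is to apply the classical metric-space characterization of compactness: a subset of a metric space is compact if and only if it is both complete and totally bounded. Since \( \ch_2^1(\O_R) \) is the completion of \( {\cal A}_2^1(\O_R) \) in the \( B^2 \) norm, it is a Banach space and in particular a complete metric space, so it suffices to verify these two properties for \( {\cal S}_2(\O_R, \g, q) \), and both are already available from the preceding results.

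First I would invoke Theorem \ref{thm:complete}, which gives that \( {\cal S}_2(\O_R, \g, q) \) is a complete subset of \( \overline{{\cal I}_2^1(\O_R)} \). Because \( \overline{{\cal I}_2^1(\O_R)} \) inherits the same \( B^2 \) norm as the ambient space \( \ch_2^1(\O_R) \), completeness of \( {\cal S}_2(\O_R, \g, q) \) as a metric subspace does not depend on which of these spaces we regard it inside; equivalently, \( {\cal S}_2(\O_R, \g, q) \) is a closed subset of \( \ch_2^1(\O_R) \). Second I would invoke Proposition \ref{pro:tb}, which establishes that \( {\cal S}_2(\O_R, \g, q) \) is totally bounded in \( \ch_2^1(\O_R) \). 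Combining these two facts, the standard theorem yields at once that \( {\cal S}_2(\O_R, \g, q) \) is compact.

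There is essentially no remaining obstacle at this stage, since the substantive work has already been carried out: completeness rests on the continuity of the area functional together with the closedness of the spanning condition (Proposition \ref{thm:spanwell}), while total boundedness rests on the explicit finite net \( {\cal Z}(k) \) constructed in Proposition \ref{pro:tb}. The only point worth a line of care is confirming that completeness (asserted for the subspace \( \overline{{\cal I}_2^1(\O_R)} \)) and total boundedness (asserted in \( \ch_2^1(\O_R) \)) are measured in the same norm, which they are; one should also recall, as in the proof of Proposition \ref{pro:tb}, that since the \( B^r \) norms decrease in \( r \) the \( B^1 \) estimates obtained there dominate the \( B^2 \) norm, so the net is genuinely \( B^2 \)-fine and total boundedness holds in the intended topology on \( \ch_2^1(\O_R) \).
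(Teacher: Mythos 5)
Your proposal is correct and matches the paper's proof, which likewise deduces compactness directly from completeness (Theorem \ref{thm:complete}) and total boundedness (Proposition \ref{pro:tb}) via the standard metric-space criterion. Your extra remark that the \( B^1 \) estimates in Proposition \ref{pro:tb} dominate the \( B^2 \) norm (since the \( B^r \) norms decrease in \( r \)) is a valid clarification of a point the paper leaves implicit.
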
 

\begin{proof} 
This follows from Propositions \ref{thm:complete} and \ref{pro:tb}.                                     
\end{proof}

 \begin{thm}\label{pro:m} 
Suppose \( \g \) is a smoothly embedded closed curve   embedded in \( \O_R -\{q\}  \).   	 There exists \( S_0 \in {\cal S}_2(\O_R, \g, q) \) spanning \( \g \) with minimal area \( A(S_0) \).
\end{thm}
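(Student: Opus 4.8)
The plan is to invoke the direct method of the calculus of variations, which becomes immediate once the three supporting facts assembled in this section are combined: the candidate set \( {\cal S}_2(\O_R, \g, q) \) is nonempty, the area functional \( A \) is continuous on it, and the set is compact. First I would record that \( {\cal S}_2(\O_R, \g, q) \neq \emptyset \), since it contains \( P_X \widetilde{M_0} \in {\cal I}_2^1(\O_R) \), where \( M_0 \) is a Federer--Fleming minimizer and \( X \) is a smooth vector field extending \( Y \) whose component orthogonal to \( M_0 \) is unit, as noted following Definition \ref{CALC}. Hence the infimum \( m := \inf\{A(S) : S \in {\cal S}_2(\O_R, \g, q)\} \) is taken over a nonempty set and is finite; moreover Proposition \ref{pro:holes} gives \( m \geq a_0 > 0 \), so the minimal area is genuinely positive.

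Next I would choose a minimizing sequence \( S_i \in {\cal S}_2(\O_R, \g, q) \) with \( A(S_i) \to m \). By the Compactness Theorem \ref{pro:com}, \( {\cal S}_2(\O_R, \g, q) \) is compact in \( \ch_2^1(\O_R) \), so some subsequence \( S_{i_j} \) converges to a limit \( S_0 \in {\cal S}_2(\O_R, \g, q) \). Since \( {\cal S}_2(\O_R, \g, q) \subset \overline{{\cal I}_2^1(\O_R)} \) and the area functional \( A:\overline{{\cal I}_2^1(\O_R)} \to \R \) is continuous (established after Theorem \ref{prop:areas} from the continuity of \( \kappa \) in Theorem \ref{thm:H} together with joint continuity of the integral), I would conclude \( A(S_0) = \lim_j A(S_{i_j}) = m \). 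Finally, membership \( S_0 \in {\cal S}_2(\O_R, \g, q) \) forces \( S_0 \) to span \( \g \) by Definition \ref{CALC}, so \( S_0 \) is the desired area minimizer.

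There is in fact no hard step left to do here: all the analytic difficulty has been front-loaded into the compactness statement (total boundedness via the finite lattice approximation of Proposition \ref{pro:tb}, completeness from Theorem \ref{thm:complete}) and into the proof that spanning is a closed condition (Proposition \ref{thm:spanwell}). The only point meriting a moment's care is that the topology witnessing compactness and the topology witnessing continuity of \( A \) agree on \( {\cal S}_2(\O_R, \g, q) \); this holds because \( \overline{{\cal I}_2^1(\O_R)} \) is a topological subspace of \( \ch_2^1(\O_R) \), both inheriting the \( B^2 \) norm, so convergence of the subsequence in the chainlet topology is convergence in \( \overline{{\cal I}_2^1(\O_R)} \) as well. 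With that observation, the existence of a minimizer reduces to the elementary fact that a continuous real-valued function on a nonempty compact set attains its infimum.
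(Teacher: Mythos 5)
Your proposal is correct and follows essentially the same route as the paper's own proof: the paper likewise takes a minimizing sequence, invokes Proposition \ref{pro:holes} for positivity of the infimum, the compactness Theorem \ref{pro:com}, and the continuity of \( A \) to extract a subsequential limit \( S_0 \in {\cal S}_2(\O_R, \g, q) \) with \( A(S_0) = m \). Your additional remarks on nonemptiness of the candidate set and on the compatibility of the topologies are accurate elaborations of points the paper establishes just before the theorem, not a different argument.
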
   

\begin{proof}
 Let \( m = \inf\{A(S): S\in {\cal S}_2(\O_R, \g, q)  \} \). By Proposition \ref{pro:holes} we know \( m > 0 \). There exists a sequence \( \{S_i\} \in {\cal S}_2(\O_R, \g, q) \) such that  \( A(S_i) \to m \) as \( i \to \i \).  By compactness of \( {\cal S}_2(\O_R, \g, q) \) (Theorem \ref{pro:com}) and continuity of \( A \), there exists a subsequential limit \( S_0  \in {\cal S}_2(\O_R, \g, q)\) with \( A(S_0) = m \). 
\end{proof}

It is clearly impossible to find spanning chains with smaller area using chains not supported in \( \O_R \), but we have to prove this.  The technical difficulty is that pushforward (via a diffeomorphism) of a dipole chain can change not only area but also ``dipole distance'' between layers since \( F_*(p; u \otimes \a) = (F(p); F_*u \otimes F_* \a) \).    We can ``renormalize'' a dipole surface after we have applied pushforward, so that it becomes an integral dipole surface spanning \( \g \).

\begin{thm}\label{thm:indep}
The solution \( S_0 \) is independent of the choice of \( q \) and \( R \).
\end{thm}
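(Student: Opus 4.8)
The plan is to separate the two parameters and, in each case, to move a competitor surface by a Lipschitz map and then \emph{renormalize} the image to a bona fide integral dipole surface, exactly the device announced in the remark preceding the statement. Observe first what actually depends on $q$ and $R$: by Proposition \ref{cor:hausdQ} the area $A(S) = {\cal H}^2(|S|)$ is a function of the support alone, the bound $c = {\cal H}^2(M_0)$ is intrinsic, and the linking clause of Definition \ref{def:span} is intrinsic; the parameters enter the feasible set ${\cal S}_2(\O_R,\g,q)$ only through the cube $\O_R$ and through the boundary constraint $\p S = P_{Y_q}\widetilde\g$. Hence it suffices to show (i) enlarging $\O_R$ does not lower the minimal area nor move the minimizing support, and (ii) replacing $Y_q$ by $Y_{q'}$ does neither.

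For (i) let $K$ be the closed convex hull of $\g$ and let $\rho\colon\R^3\to K$ be nearest--point projection, which is $1$--Lipschitz and fixes $\g$. Given $S\in{\cal S}_2(\O_{R'},\g,q)$ with $R'$ large, I would form $\rho_*S$, continuous by Theorem \ref{thm:A}; its support $\rho(|S|)$ lies in $K$, and because a $1$--Lipschitz map cannot increase ${\cal H}^2$ we have $A(\rho_*S)\le A(S)$. Renormalizing $\rho_*S$ yields $\widehat S\in\overline{{\cal I}_2^1(\O_R)}$ with $|\widehat S|=\rho(|S|)\subset K$, $A(\widehat S)={\cal H}^2(\rho(|S|))\le A(S)$, and $\p\widehat S=P_{Y_q}\widetilde\g$; the span clause survives because it is intrinsic and $\rho$ fixes $\g$, so no unit link of $\g$ can be produced that avoids $\rho(|S|)$ (compare Proposition \ref{thm:spanwell}). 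Consequently the infimum over every admissible $\O_R\supset K$ equals the infimum over surfaces supported in $K$, which is manifestly $R$--independent; and equality in the area inequality forces a minimizer's support into $K$ by the convex--hull property, so the geometric solution lives in $K\subset\O_R$ for all admissible $R$.

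For (ii) I would use that $\O_R\setminus\g$ is connected (a curve has codimension two), so two admissible base points $q,q'$ are joined by a path $q_t$ along which $Y_{q_t}$ is a continuously varying transverse unit field on $\g$; thus the boundary data $P_{Y_{q_t}}\widetilde\g$ deform continuously into one another. Given the $q$--minimizer $S_0$ produced by Theorem \ref{pro:m}, with support $\Sigma_0=|S_0|$, I would re--represent $\Sigma_0$ as an integral dipole surface whose transverse field is adjusted only in a thin collar of $\g$ so that its boundary dipole becomes $P_{Y_{q'}}\widetilde\g$. Since $|P_X\widetilde\tau|=\tau$ (Proposition \ref{prop:more}), this collar modification alters $\Sigma_0$ only on a set that is ${\cal H}^2$--null in the limit and leaves the linking behaviour intact, so it changes neither $A$ nor the span clause. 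Running the same construction with $q,q'$ interchanged shows the two minimal values coincide and that $\Sigma_0$ minimizes for $q'$ as well; hence the soap film $|S_0|$ is the same for every $q$.

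The hard part is the renormalization in both (i) and (ii). As noted in the remark, pushforward acts by $F_*(p;u\otimes\a)=(F(p);F_*u\otimes F_*\a)$, so $\rho_*$ (and the collar re--representation) rescales the interlayer ``dipole distance'' and may destroy the defining unit--orthogonal--component condition for an integral dipole cell. I would repair this cell by cell, dividing each transverse field by its component orthogonal to the corresponding cell --- an operation that fixes the support and therefore fixes ${\cal H}^2(|S|)$ --- while arranging the field to equal the prescribed $Y_q$ (respectively $Y_{q'}$) on a collar of $\g$ so that the boundary comes out as $P_{Y_q}\widetilde\g$. Verifying that this renormalized chain still lies in $\overline{{\cal I}_2^1(\O_R)}$ and that the projection in (i) does not increase area \emph{nor} break the linking condition are the two points that require genuine care; everything else is bookkeeping with the continuity and intrinsic--area statements already established.
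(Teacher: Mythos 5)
Your overall strategy (transport a competitor and renormalize) matches the device the paper announces, but both of your constructions have genuine gaps at exactly the points you flag as ``requiring care,'' and the paper's actual proof avoids them by different means. For the \( R \)-independence, your nearest-point projection \( \rho \) onto the convex hull \( K \) is only \( 1 \)-Lipschitz: its derivative jumps across \( \p K \) (and \( \g \) itself lies partly on \( \p K \)), so \( \rho \notin {\cal M}^2(\O_{R'},\O_R) \) in the sense of Definition \ref{def:Mr}, and Theorem \ref{thm:A} therefore does not give continuity of \( \rho_* \) on the \( B^2 \)-based space \( \overline{{\cal I}_2^1} \subset \hB_2^2 \) where dipole chains live. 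Worse, \( \rho(\t_i) \) is merely a Lipschitz image, not a smoothly embedded \( 2 \)-cell, so your cell-by-cell renormalization has no object to renormalize: integral dipole cells are built over smoothly embedded cells with transverse \( B \)-regular fields. Finally, since \( \rho \) is not injective, a link \( \g' \) of \( \g \) avoiding \( \rho(|S|) \) cannot be pulled back to a link avoiding \( |S| \); your claim that the span clause survives projection is asserted, not proved. The paper sidesteps all three problems at once: it compares \( \O_{R_1} \supset \O_R \) via a \emph{smooth contracting diffeomorphism} \( F:\O_{R_1} \to \O_R \) equal to the identity on a smaller cube \( \O_{R_2} \supset \g \), so smooth cells go to smooth cells (rebuilding \( C_2 \in {\cal I}_2(\O_R,\g,q) \) on the image cells is then legitimate), \( {\cal H}^2 \) does not increase, and the span condition transfers by pulling any link back through the diffeomorphism --- an argument unavailable for your non-injective \( \rho \). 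Your convex-hull conclusion would be a stronger localization if it held, but the regularity to run it in this framework is not established.

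For the \( q \)-independence your path-plus-collar argument has two soft spots: the path \( q_t \) may pass through points where \( q_t - p \) fails to be transverse to \( \g \) (Sard's theorem gives admissibility only for a.e.\ \( q \), and the admissible set need not be path-connected through admissible points without further argument), and ``alters \( \Sigma_0 \) only on a set that is \( {\cal H}^2 \)-null in the limit'' is a limiting construction whose convergence in \( \overline{{\cal I}_2^1(\O_R)} \), and whose effect on \( \p \), you never control. The paper needs none of this: given a competitor \( C \) with \( \p C = P_{Y'}\widetilde{\g} \), it adds the exact monopole correction \( C' = C + E_{Y-Y'}\widetilde{\g} \). Since \( \p \widetilde{\g} = 0 \), one has \( \p E_{Y-Y'}\widetilde{\g} = P_{Y-Y'}\widetilde{\g} \), whence \( \p C' = P_Y \widetilde{\g} \) exactly; and \( |C'| = |C| \) because \( |E_{Y-Y'}\widetilde{\g}| = \g \subset |C| \), so neither the area nor the span clause changes. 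This one-line algebraic identity replaces your entire collar deformation and is the step your proposal is missing. Note also that the theorem is proved by contradiction against the minimal \emph{value} \( A(S_0) \) for each parameter change separately; no homotopy of boundary data is required.
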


\begin{proof}
	Suppose \( q \) and \( q' \) are two cone points in \( \O_R \).   Suppose there exists \( S' \in  {\cal S}_2(\O_R, \g, q') \)  with \( A(S') < A(S_0) \).   By continuity of the area functional, there exists \( C =  \sum_{i=1}^s P_{X_i} \widetilde{\t_i}  \in {\cal I}_2(\O_R, \g, q') \) that spans \( \g \), \( A(C) < A(S_0) \) and \( \p C = P_{Y'} \widetilde{\g} \).  Claim:  \( C' = C +  E_{Y-Y'} \widetilde{\g} \in {\cal I}_2(\O_R, \g, q) \)  and \( A(C +  E_{Y-Y'} \widetilde{\g}) < A(S_0) \).  We first show that   \( C' \) spans \( \g \):  But \( \p C' = \p C + P_{Y-Y'} \widetilde{\g} =  P_Y \widetilde{\g} \). Furthermore, \( |C'| = |C| \) since \( |E_{Y-Y'} \widetilde{\g}| = \g \subset |C| \).  Since \( C \) spans \( \g \), so does \( C' \).  
	
  For the second part, there exists  \( 0 < R_2 < R \) with  \( \g \subset \O_{R_2} \).
Let \( R_1 > R \).   Suppose there exists \( S_1 \in  {\cal S}_2(\O_{R_1}, \g, q) \) with \( A(S_1) < A(S_0) \).   As before,   there exists \( C_1 =  \sum_{i=1}^s P_{X_i} \widetilde{\t_i}  \in {\cal I}_2(\O_{R_1}, \g, q) \) that spans \( \g \), \( A(C_1) < A(S_0) \), \( \p C_1 = P_Y \widetilde{\g} \), and the \( \{\t_i\} \) are non-overlapping.  Furthermore,    \( |C_1| = \cup \t_i \) has no free edges except those edges whose union is \( \g \).  Let  \( F:\O_{R_1} \to \O_R \) be a contracting diffeomorphism that is the identity on \( \O_{R_2} \).  Then   \( \cup F(\t_i) \) is a union of non-overlapping embedded cells which has no free edges except for those edges whose union is \( \g \). Furthermore, \( \cup F(\t_i) \) spans \( \g \).  It follows that there  exists an integral dipole chain \( C_2 \in {\cal I}_2(\O_R, \g, q)  \) with \( |C_2| = \cup F(\t_i) \), \( C_2 \) spans \( \g \), and \( A(C_2) = {\cal H}^2(\cup F(\t_i)) \le {\cal H}^2(\cup \t_i) = {\cal H}^2(|C_1|) = A(C_1) < A(S_0) \), a contradiction.  
\end{proof}

 The differential $2$-chain \( S_0 \) is therefore a solution to the general problem of Plateau, proving Theorem \ref{thm:Plat}.   
 
\begin{remarks}\label{remarksend}  \mbox{}
  \begin{itemize}
  	\item The extension to codimension \( j \) surfaces with multiple junctions is obtained simply by replacing the vector fields \( X \) and \( Y \) with \( j \)-vector fields.   Figure \ref{fig:tube} illustrates a curve with a junction in \( \R^3 \) and its dipole representative which has dimension $2$ as a chain.    
		    \begin{figure}[htbp]
		     	\centering
		     		\includegraphics[height=1.5in]{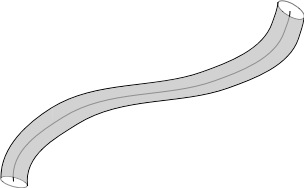}
		     	\caption{A branched curve of codimension two represented by a dipole}
		     	\label{fig:tube}
		     \end{figure}
	   	  	\item A \emph{closed frame} \( \b \) is defined to a union of closed curves.   Any frame \( \cup_{i=1}^s \g_i \) supports an integral dipole curve \( C =    \sum_{i=1}^s P_{X_i} \widetilde{\g_i} \) where \( \g_i \) is smoothly embedded in \( \R^3 \), \( X_i \) is a vector field whose component orthogonal to \( \g_i \) is unit.  The \( X_i \) can be chosen so that \( \p C =  \sum_{i=1}^s \p P_{X_i} \widetilde{\g_i} = 0  \) (see Figure  \ref{fig:branches}).  Any finite number of junctions are permitted, and we still obtain a cycle.
	  We may therefore apply our methods to find a spanning surface of a prescribed closed frame with minimal area (see Figure \ref{Plateausquare}).

 		 \begin{figure}[htbp]
					 	\centering
					 		\includegraphics[height=2in]{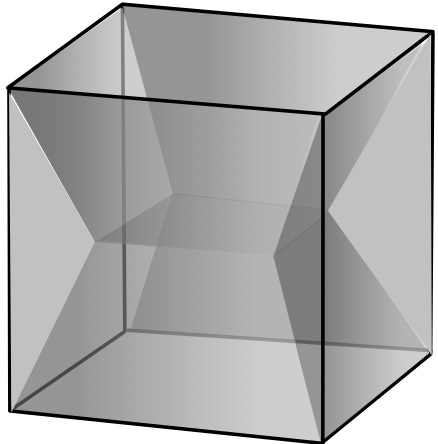}
					 	  \caption{Plateau's square}
					 	\label{Plateausquare}
					 \end{figure}

			  	\item   Smoothly embedded curves that are not cycles can have spanning surfaces as in Figure   \ref{EmptyLoop.jpg}.    The question arises\footnote{Almgren used this example in  \cite{almbulletin} to defend the lack of a boundary operator for varifolds.   He wrote, ``in many  of the phenomena to which our results are applicable there seems no natural	notion of such an operator.''\,|\,}, what is the boundary of such a surface \( S \)?  
					 Our methods apply to this surface,  and produce  \( \p S =  \p E_X \widetilde{\g} \) where \( \g \) is the part of the embedded closed curve that meets \( |S| \). 
					 
		\item	   It is an interesting question to state and pose a version of Plateau's problem for \emph{frames} which are defined as unions of smoothly embedded arcs which are not necessarily closed.  The definition of ``span'' has to be reformulated as a first step.  It would then be desirable to find a condition on an arc to guarantee existence of a nontrivial spanning surface.  
			
				\begin{figure}[htbp]
					\centering
						\includegraphics[height=2in]{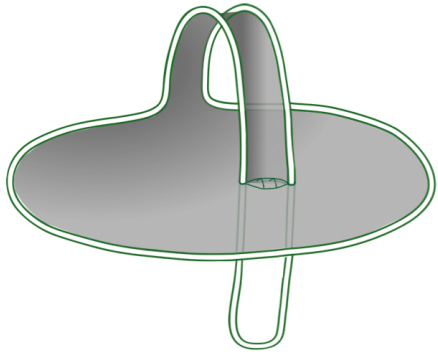}
					\caption{The boundary \( \widetilde{\g} \) of this film is a cycle, but its support \( \g \) is not.}
					\label{EmptyLoop.jpg}
				\end{figure}
				\item 	  Other constraints are possible using the continuous operators on chains available to us, not just boundary. An intriguing example is \( \perp \p \perp \).   These are topics for further research. 
				  \end{itemize}

\end{remarks}

	\addcontentsline{toc}{section}{References}
	\bibliography{Jennybib.bib, mybib.bib}{}
	\bibliographystyle{amsalpha}

\end{document}